\newcommand{\scal}[2]{\langle{{#1},{#2}}\rangle}
\newcommand{\RR}{\ensuremath{\mathbb R}}
\newcommand{\RX}{\ensuremath{\,\left]-\infty,+\infty\right]}}
\newcommand{\RXX}{\ensuremath{\,\left[-\infty,+\infty\right]}}
\newcommand{\NN}{\ensuremath{\mathbb N}}
\newcommand{\thalb}{\ensuremath{\tfrac{1}{2}}}
\newcommand{\To}{\ensuremath{\rightrightarrows}}
\newcommand{\dom}{\ensuremath{\operatorname{dom}}}
\newcommand{\gra}{\ensuremath{\operatorname{gra}}}
\newcommand{\intdom}{\ensuremath{\operatorname{int}\operatorname{dom}}\,}
\newcommand{\inte}{\ensuremath{\operatorname{int}}}
\newcommand{\ran}{\ensuremath{\operatorname{ran}}}
\newcommand{\Id}{\ensuremath{\operatorname{Id}}}
\newcommand{\menge}[2]{\big\{{#1} \mid {#2}\big\}}
\newcommand{\Elag}{\ensuremath{\varepsilon}}
\newcommand{\ve}{\ensuremath{\varepsilon}}
\renewcommand{\phi}{\ensuremath{\varphi}}
\newtheorem{theorem}{Theorem}[section]
\newtheorem{lemma}[theorem]{Lemma}
\newtheorem{fact}[theorem]{Fact}
\newtheorem{corollary}[theorem]{Corollary}
\newtheorem{proposition}[theorem]{Proposition}
\newtheorem{definition}[theorem]{Definition}
\theoremstyle{plain}{\theorembodyfont{\rmfamily}
}
\theoremstyle{plain}{\theorembodyfont{\rmfamily}
}
\theoremstyle{plain}{\theorembodyfont{\rmfamily}
}
\theoremstyle{plain}{\theorembodyfont{\rmfamily}
\newtheorem{example}[theorem]{Example}}
\theoremstyle{plain}{\theorembodyfont{\rmfamily}
\newtheorem{remark}[theorem]{Remark}}
\theoremstyle{plain}{\theorembodyfont{\rmfamily}
}
\def\endproof{\ensuremath{\quad \hfill \blacksquare}}
\begin{document}


\title{\sffamily{Monotone Operators without Enlargements
 }}

\author{
Jonathan M.
Borwein\thanks{CARMA, University of Newcastle, Newcastle, New South
Wales 2308, Australia. E-mail:
\texttt{jonathan.borwein@newcastle.edu.au}.  Distinguished Professor
King Abdulaziz University, Jeddah. },\;
Regina Burachik\thanks{School of Mathematics and Statistics,
 University of South Australia, Mawson Lakes, SA 5095, Australia. E-mail:
\texttt{regina.burachik@unisa.edu.au}.},\;
and Liangjin\
Yao\thanks{Mathematics, Irving K.\ Barber School, University of British Columbia,
Kelowna, B.C. V1V 1V7, Canada.
E-mail:  \texttt{ljinyao@interchange.ubc.ca}.}}

\date{October 12, 2011}
\maketitle

\begin{abstract} \noindent
Enlargements have proven to be useful tools for studying maximally
monotone mappings. It is therefore natural to ask in which cases the
enlargement does not change the original mapping.  Svaiter has
recently characterized non-enlargeable operators in reflexive Banach
spaces and has also given some partial results in the nonreflexive case.
In the present paper, we provide another characterization of
non-enlargeable operators in nonreflexive Banach spaces under a
closedness assumption on the graph.  Furthermore, and still for
general Banach spaces, we present a new proof of the maximality of the
sum of two maximally monotone linear relations. We also present a new proof of the
maximality of the sum of a maximally monotone linear relation and a normal cone operator
when the domain of the linear relation intersects the interior of the
domain of the normal cone.
\end{abstract}

\noindent {\bfseries 2010 Mathematics Subject Classification:}\\
{Primary  47A06, 47H05;
Secondary
47B65, 47N10,
 90C25}
\noindent

\noindent {\bfseries Keywords:} Adjoint,
Fenchel conjugate, Fitzpatrick
function,  linear relation, maximally monotone operator,
monotone operator, multifunction, normal cone operator,
 non-enlargeable operator,
operator of type  (FPV),
 partial inf-convolution,
set-valued operator.

\noindent

\section{Introduction}

Maximally monotone operators have proven to be a significant class of
objects in both modern Optimization and Functional Analysis. They
extend both the concept of subdifferentials of convex functions, as
well as that of a positive semi-definite function. Their study in the context of
Banach spaces, and in particular nonreflexive ones, arises naturally in
the theory of partial differential equations, equilibrium problems,
and variational inequalities. For a detailed study of these operators,
see, e.g., \cite{Bor1,Bor2,Bor3}, or the books
\cite{BC2011,BorVan,BurIus,ph,Si,Si2,RockWets,Zalinescu,Zeidler}.

A useful tool for studying or proving properties of a maximally
monotone operator $A$ is the concept of the ``enlargement of $A$''. A
main example of this usefulness is Rockafellar's proof of maximality
of the subdifferential of a convex function (Fact \ref{SubMR} below), which uses
the concept of $\ve$-subdifferential. The latter is an enlargement of
the subdifferential introduced in \cite{BR}.

Broadly speaking, an enlargement is a multifunction which approximates
the original maximally monotone operator in a convenient way.
Another useful way to study a maximally monotone operator is by associating to
it a convex function called the Fitzpatrick function.  The latter was
introduced by Fitzpatrick in \cite{Fitz88} and its connection with
enlargements, as shown in \cite{BurSva:1}, is contained in
\eqref{Enl:1} below.

Our first aim in the present paper is to provide further
characterizations of maximally monotone operators which are not
enlargeable, in the setting of possibly nonreflexive Banach spaces
(see Section~\ref{secneo}). In other words, in which cases the
enlargement does not change the graph of a maximally monotone mapping
defined in a Banach space? We address this issue Corollary \ref{CEl:2},
under a closedness assumption on the graph of the operator.

Our other aim is to use the Fitzpatrick function to derive new
results which establish the maximality of the sum of two maximally monotone
operators in nonreflexive spaces (see Section~\ref{secsumo}). First, we provide a different proof
of the maximality of the sum of two  maximally monotone
linear relations. Second, we provide a proof of the maximality of the sum of
a  maximally monotone linear relation and a normal cone operator when
the domain of the operator intersects the interior of the domain of
the normal cone.

\section{Technical Preliminaries}

Throughout this paper, $X$ is a real Banach space with norm
$\|\cdot\|$, and $X^*$ is the continuous dual of $X$. The spaces $X$
and $X^*$ are paired by the duality pairing, denoted as
$\scal{\cdot}{\cdot}$. The space $X$ is identified with its canonical
image in the bidual space $X^{**}$.  Furthermore, $X\times X^*$ and
$(X\times X^*)^*: = X^*\times X^{**}$ are  paired via
$\scal{(x,x^*)}{(y^*,y^{**})}:= \scal{x}{y^*} + \scal{x^*}{y^{**}}$,
where $(x,x^*)\in X\times X^*$ and $(y^*,y^{**}) \in X^*\times
X^{**}$.

Let $A\colon X\To X^*$ be a \emph{set-valued operator} (also known as
a multifunction) from $X$ to $X^*$, i.e., for every $x\in X$,
$Ax\subseteq X^*$, and let $\gra A:= \menge{(x,x^*)\in X\times
  X^*}{x^*\in Ax}$ be the \emph{graph} of $A$.  The \emph{domain} of
$A$ is $\dom A:= \menge{x\in X}{Ax\neq\varnothing}$, and $\ran
A:=A(X)$ for the \emph{range} of $A$. Recall that $A$ is
\emph{monotone} if
\begin{equation}
\scal{x-y}{x^*-y^*}\geq 0,\quad \forall (x,x^*)\in \gra A\;
\forall (y,y^*)\in\gra A,
\end{equation}
and \emph{maximally monotone} if $A$ is monotone and $A$ has
 no proper monotone extension
(in the sense of graph inclusion).
Let $A:X\rightrightarrows X^*$ be monotone and $(x,x^*)\in X\times X^*$.
 We say $(x,x^*)$ is \emph{monotonically related to}
$\gra A$ if
\begin{align*}
\langle x-y,x^*-y^*\rangle\geq0,\quad \forall (y,y^*)\in\gra
A.\end{align*}
Let $A:X\rightrightarrows X^*$ be maximally monotone. We say $A$ is
\emph{of type (FPV)} if  for every open convex set $U\subseteq X$ such that
$U\cap \dom A\neq\varnothing$, the implication
\begin{equation*}
x\in U\text{and}\,(x,x^*)\,\text{is monotonically related to $\gra A\cap U\times X^*$}
\Rightarrow (x,x^*)\in\gra A
\end{equation*}
holds. Maximally monotone operators of type (FPV) are relevant primarily in
the context of nonreflexive Banach spaces. Indeed, it follows from
\cite[Theorem 44.1]{Si2} and a well-known result from \cite{Rock701}
that every maximally monotone operator defined in a reflexive Banach
space is of type (FPV). As mentioned in  \cite[\S44]{Si2}, an example
of a maximally monotone operator which is not of type (FPV) has not been found yet.

Let $A:X\rightrightarrows X^*$ be monotone such that $\gra A\neq\varnothing$. The
\emph{Fitzpatrick function} associated with $A$ is defined by
\begin{equation*}
F_A\colon X\times X^*\to\RX\colon
(x,x^*)\mapsto \sup_{(a,a^*)\in\gra A}
\big(\scal{x}{a^*}+\scal{a}{x^*}-\scal{a}{a^*}\big).
\end{equation*}
When $A$ is maximally monotone, a fundamental property of the
Fitzpatrick function $F_A$ (see Fact \ref{FFc}) is that
\begin{align}
F_A(x,x^*)&\ge \scal{x}{x^*} \hbox{ for all }(x,x^*)\in X\times X^*,\label{FFa}\\
 F_A(x,x^*)&= \scal{x}{x^*} \hbox{ for all }(x,x^*)\in \gra A.\label{FFb}
\end{align}
Hence, for a fixed $\varepsilon\geq0$, the set of pairs $(x,x^*)$ for
which $F_A(x,x^*)\le \langle x,x^*\rangle+\varepsilon$ contains the
graph of $A$. This motivates the definition of enlargement of $A$ for
a general monotone mapping $A$, which is as follows.

Let $\varepsilon\geq0$. We define $A_{\Elag}:X\rightrightarrows X^*$ by
\begin{align}\gra A_{\Elag}&:=\Big\{(x,x^*)\in X\times X^*\mid\langle x^*-y^*,x-y\rangle
\geq-\varepsilon,\; \forall (y, y^*)\in \gra A\Big\}\nonumber\\
&=\Big\{(x,x^*)\in X\times X^*\mid F_A (x,x^*)\leq \langle x,x^*\rangle+\varepsilon\Big\}.\label{Enl:1}
\end{align}

Let $A:X\rightrightarrows X^*$ be monotone.  We say $A$ is
\emph{enlargeable} if $\gra A\varsubsetneqq \gra A_{\Elag}$ for some
$\varepsilon\geq0$, and $A$ is \emph{non-enlargeable} if $\gra A=\gra
A_{\Elag}$ for every $\varepsilon\geq0$.  Lemma 23.1 in \cite{Si2}
proves that if a proper and convex function verifies \eqref{FFa}, then
the set of all pairs $(x,x^*)$ at which \eqref{FFb} holds is a
monotone set. Therefore, if $A$ is non-enlargeable then it must be
maximally monotone.

We adopt the notation used in the books \cite[Chapter 2]{BorVan} and
\cite{Bor1, Si, Si2}. Given a subset $C$ of $X$, $\inte C$ is the
\emph{interior} of $C$, $\overline{C}$ is the \emph{norm closure} of
$C$.  The \emph{support function} of $C$, written as $\sigma_C$, is
defined by $\sigma_C(x^*):=\sup_{c\in C}\langle c,x^*\rangle$.  The
\emph{indicator function} of $C$, written as $\iota_C$, is defined at
$x\in X$ by
\begin{align}
\iota_C (x):=\begin{cases}0,\,&\text{if $x\in C$;}\\
+\infty,\,&\text{otherwise}.\end{cases}\end{align}
For every $x\in X$, the \emph{normal cone operator} of $C$ at $x$
is defined by $N_C(x):= \menge{x^*\in
X^*}{\sup_{c\in C}\scal{c-x}{x^*}\leq 0}$, if $x\in C$; and $N_C(x):=\varnothing$,
if $x\notin C$.
 The \emph{closed unit
ball} is $B_X:=\menge{x\in X}{\|x\|\leq 1}$, and
$\NN:=\{1,2,3,\ldots\}$.

If $Z$ is a real  Banach space with dual $Z^*$ and a set $S\subseteq
Z$, we denote $S^\bot$ by $S^\bot := \{z^*\in Z^*\mid\langle
z^*,s\rangle= 0,\quad \forall s\in S\}$.
The \emph{adjoint} of an operator  $A$, written $A^*$, is defined by
\begin{equation*}
\gra A^* :=
\big\{(x^{**},x^*)\in X^{**}\times X^*\mid(x^*,-x^{**})\in(\gra A)^{\bot}\big\}.
\end{equation*}
We will be interested in monotone operators which are \emph{linear
  relations}, i.e., such that $\gra A$ is a linear subspace. Note that in this situation,
$A^*$ is also a linear relation. Moreover, $A$ is \emph{symmetric} if $\gra A
\subseteq\gra A^*$. Equivalently, for all $(x,x^*), (y,y^*)\in\gra A$ it holds that
\begin{equation}\label{sym}
\scal{x}{y^*}=\scal{y}{x^*}.
\end{equation}
We say that a linear relation $A$ is
\emph{skew}  if $\gra A \subseteq \gra (-A^*)$. Equivalently, for all $(x,x^*)\in\gra A$ we have
\begin{equation}\label{skew}
\langle x,x^*\rangle=0.
\end{equation}

We define the \emph{symmetric part} a of $A$ via
\begin{equation}
\label{Fee:1}
A_+ := \thalb A + \thalb A^*.
\end{equation}
It is easy to check that $A_+$ is symmetric.

Let $f\colon X\to \RX$. Then $\dom f:= f^{-1}(\RR)$ is the
\emph{domain} of $f$, and $f^*\colon X^*\to\RXX\colon x^*\mapsto
\sup_{x\in X}(\scal{x}{x^*}-f(x))$ is the \emph{Fenchel conjugate} of
$f$. We denote by $\overline{f}$ the lower semicontinuous hull of $f$.
We say that $f$ is proper if $\dom f\neq\varnothing$.  Let $f$ be
proper. The \emph{subdifferential} of $f$ is defined by
   $$\partial f\colon X\To X^*\colon
   x\mapsto \{x^*\in X^*\mid(\forall y\in
X)\; \scal{y-x}{x^*} + f(x)\leq f(y)\}.$$
For $\varepsilon \geq 0$,
the \emph{$\varepsilon$--subdifferential} of $f$ is defined by
  \begin{align*}\partial_{\varepsilon} f\colon X\To X^*\colon
   x\mapsto \menge{x^*\in X^*}{(\forall y\in
X)\; \scal{y-x}{x^*} + f(x)\leq f(y)+\varepsilon}.
\end{align*}
Note that $\partial f = \partial_{0}f$.

Relatedly, we say $A$ is
of  Br{\o}nsted-Rockafellar (BR) type   \cite{Si2,BorVan}
if whenever $(x,x^*)\in X\times X^*$, $\alpha,\beta>0$ while
\begin{align*}\inf_{(a,a^*)\in\gra A} \langle x-a,x^*-a^*\rangle
>-\alpha\beta\end{align*} then there exists $(b,b^*)\in\gra A$ such
that $\|x-b\|<\alpha,\|x^*-b^*\|<\beta$.
The name is motivated by the celebrated theorem of  Br{\o}nsted and Rockafellar \cite{Si2,BorVan}
which can be stated now as saying that all closed convex subgradients are of type (BR).

Let $g\colon X\to\RX$. The \emph{inf-convolution} of $f$ and $g$, $f\Box g$, is defined by
\begin{align*}f\Box g: x\rightarrow\inf_{y\in X} \left[f(y)+g(x-y)\right].
\end{align*}
Let $Y$ be another real Banach space. We set  $P_X: X\times Y\rightarrow
X\colon (x,y)\mapsto x$. We denote $\Id: X\rightarrow X$ by the \emph{identity mapping}.

Let $F_1, F_2\colon X\times Y\rightarrow\RX$.
Then the \emph{partial inf-convolution} $F_1\Box_2 F_2$
is the function defined on $X\times Y$ by
\begin{equation}\label{infconv}
F_1\Box_2 F_2\colon (x,y)\mapsto \inf_{v\in Y}\left[
  F_1(x,y-v)+F_2(x,v)\right].
\end{equation}

 \section{Auxiliary results}\label{s:aux}

We collect in this section some facts we will use later on. These
facts involve convex functions, maximally monotone operators and Fitzpatrick functions.

\begin{fact}\emph{(See \cite[Proposition~3.3 and Proposition~1.11]{ph}.)}
\label{pheps:1}Let $f:X\rightarrow\RX$ be a lower semicontinuous convex
 and $\intdom f\neq\varnothing$.
Then $f$ is continuous on $\intdom f$
 and $\partial f(x)\neq\varnothing$ for every $x\in\intdom f$.
\end{fact}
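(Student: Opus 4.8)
The plan is to establish the two assertions separately: first the local continuity of $f$ on $\intdom f$, and then, building on it, the nonemptiness of $\partial f$ at interior points. Throughout, fix $x_0\in\intdom f$ and choose $\delta>0$ with the closed ball $x_0+\delta B_X\subseteq\intdom f$, so that $f$ is at least finite there.

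For continuity, the key step is to produce a neighbourhood of $x_0$ on which $f$ is bounded \emph{above}; once that is in hand, continuity is automatic. To get the bound I would write the closed ball as the countable union of the sublevel sets $C_n:=\menge{x\in x_0+\delta B_X}{f(x)\le n}$, $n\in\NN$. Lower semicontinuity makes each $C_n$ closed, and since $X$ is complete the closed ball is a Baire space; hence some $C_N$ fails to be nowhere dense, yielding a sub-ball $x_1+\rho B_X$ on which $f\le N$. A convexity transport then carries this bound back to $x_0$: taking a point $z\in\dom f$ on the ray from $x_1$ through $x_0$ slightly beyond $x_0$ (available precisely because $x_0$ is interior), every $y$ near $x_0$ is a convex combination $y=(1-\lambda)w+\lambda z$ with $w\in x_1+\rho B_X$, so $f(y)\le(1-\lambda)N+\lambda f(z)$ is bounded above on a ball about $x_0$. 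Reflecting through $x_0$ converts the upper bound into a lower bound, and local two-sided boundedness of a convex function forces local Lipschitz continuity, hence continuity at $x_0$.

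For the subdifferential, I would use a Hahn--Banach separation in $X\times\RR$. Since $f$ is continuous at $x_0$, the convex set $\epi f$ has nonempty interior (it contains points $(x,t)$ with $x$ near $x_0$ and $t>f(x)$), while $(x_0,f(x_0))$ lies on its boundary; separating this point from $\epi f$ produces a nonzero $(x^*,\alpha)\in X^*\times\RR$ with $\scal{x}{x^*}+\alpha t\le\scal{x_0}{x^*}+\alpha f(x_0)$ for all $(x,t)\in\epi f$. Letting $t\to+\infty$ forces $\alpha\le 0$, and $\alpha=0$ is impossible, since it would give $\scal{x-x_0}{x^*}\le 0$ for all $x\in\dom f$, and interiority of $x_0$ then forces $x^*=0$, contradicting nonzeroness. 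Normalising $\alpha=-1$ and taking $t=f(x)$ yields $\scal{x-x_0}{x^*}\le f(x)-f(x_0)$ for all $x\in X$, that is, $x^*\in\partial f(x_0)$.

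The main obstacle is the first step, specifically the passage to local boundedness above: in a general infinite-dimensional Banach space this genuinely needs completeness through the Baire category theorem, and the subsequent convex transport of the bound from $x_1+\rho B_X$ to a neighbourhood of $x_0$ must be arranged so that the hypothesis $x_0\in\intdom f$ is actually invoked to locate the auxiliary point $z$. Once boundedness is secured, the local Lipschitz estimate and the separation argument are routine, the only remaining subtlety being the verification that the supporting hyperplane is non-vertical, which again rests on $x_0$ being an interior point of the domain.
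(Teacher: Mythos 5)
The paper does not actually prove this statement: it is imported as a Fact from Phelps's book (Propositions 3.3 and 1.11), so there is no internal proof to compare against. Your argument is correct and is essentially the classical proof from that cited source: Baire category plus lower semicontinuity give local boundedness above on a sub-ball, convexity transports that bound to a neighbourhood of $x_0$ (using $x_0\in\intdom f$ to place the auxiliary point $z$) and upgrades it to local Lipschitz continuity, and a supporting-hyperplane separation in $X\times\RR$, with non-verticality of the hyperplane again forced by interiority of $x_0$, produces a subgradient.
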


 \begin{fact}[Rockafellar] \label{f:F4}
\emph{(See {\cite[Theorem~3(a)]{Rock66}},
{\cite[Corollary~10.3 and Theorem~18.1]{Si2}}, or
{\cite[Theorem~2.8.7(iii)]{Zalinescu}}.)}
Let $f,g: X\rightarrow\RX$ be proper convex functions.
Assume that there exists a point $x_0\in\dom f \cap \dom g$
such that $g$ is continuous at $x_0$.
Then for every $z^*\in X^*$,
there exists $y^*\in X^*$ such that
\begin{equation}
(f+g)^*(z^*) = f^*(y^*)+g^*(z^*-y^*).
\end{equation}
\end{fact}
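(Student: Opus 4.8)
The plan is to establish the stronger assertion that the infimal convolution $f^*\Box g^*$ is \emph{exact} at $z^*$, i.e.\ that the infimum $\inf_{y^*}\big(f^*(y^*)+g^*(z^*-y^*)\big)$ is attained and equals $(f+g)^*(z^*)$. One half is automatic: for every $y^*\in X^*$ and every $x\in X$ one has the identity $\scal{x}{z^*}-f(x)-g(x)=\big(\scal{x}{y^*}-f(x)\big)+\big(\scal{x}{z^*-y^*}-g(x)\big)$, so taking the supremum over $x$ and using subadditivity of the supremum yields $(f+g)^*(z^*)\le f^*(y^*)+g^*(z^*-y^*)$ for all $y^*$. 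Hence it suffices to produce a single $y^*$ with $f^*(y^*)+g^*(z^*-y^*)\le (f+g)^*(z^*)$. Since $x_0\in\dom f\cap\dom g$ the right-hand side is never $-\infty$, and if it equals $+\infty$ any $y^*$ works (both conjugates avoid the value $-\infty$ because $f,g$ are proper); so I may assume $(f+g)^*(z^*)$ is finite.

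First I would absorb $z^*$ into the first function by putting $h:=f-\scal{\cdot}{z^*}$, a proper convex function with $\dom h=\dom f$ and $h^*(u^*)=f^*(u^*+z^*)$, so that $(f+g)^*(z^*)=-\inf_{x}\big(h(x)+g(x)\big)=:-\alpha$ with $\alpha\in\RR$. The core of the argument is a Hahn--Banach separation in $X\times\RR$ of the two convex sets $\epi h=\menge{(x,r)}{r\ge h(x)}$ and $B:=\menge{(x,s)}{s\le\alpha-g(x)}$. The inequality $h+g\ge\alpha$ makes $\inte B$ disjoint from $\epi h$, and continuity of $g$ at $x_0$ is exactly what guarantees $\inte B\neq\varnothing$: $g$ is bounded above by some $M$ on a neighbourhood $U$ of $x_0$, whence $U\times\left]-\infty,\alpha-M\right[\subseteq B$. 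The separation theorem therefore supplies a nonzero $(w^*,\lambda)\in X^*\times\RR$ and $c\in\RR$ with $\scal{x}{w^*}+\lambda r\ge c$ on $\epi h$ and $\scal{x}{w^*}+\lambda s\le c$ on $\inte B$.

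The decisive step---and the place where the qualification hypothesis genuinely bites---will be to show that the separating hyperplane is non-vertical, namely $\lambda>0$. Letting $r\to+\infty$ in $\epi h$ and $s\to-\infty$ in $B$ forces $\lambda\ge0$. To exclude $\lambda=0$, note that then $\scal{x}{w^*}\ge c$ would hold for all $x\in\dom f$ while $\scal{x}{w^*}\le c$ would hold on a ball $x_0+\delta B_X\subseteq\dom g$ (available because $g$ is continuous, hence finite, on a neighbourhood of $x_0$); evaluating the first inequality at $x_0\in\dom f$ and comparing with the second on the ball forces $w^*=0$, contradicting $(w^*,\lambda)\neq0$. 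Thus $\lambda>0$, and I normalize $\lambda=1$.

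Finally I would read the two conjugate estimates off the normalized inequalities. Choosing $r=h(x)$ in the first gives $\scal{x}{-w^*}-h(x)\le -c$ for all $x$, that is $f^*(z^*-w^*)=h^*(-w^*)\le -c$. Since the closed half-space $\{\scal{x}{w^*}+s\le c\}$ contains $\overline{\inte B}\supseteq B$ (by the line-segment principle applied to an interior point of $B$), the second inequality holds throughout $B$; taking $s=\alpha-g(x)$ there yields $\scal{x}{w^*}-g(x)\le c-\alpha$ for all $x$, i.e.\ $g^*(w^*)\le c-\alpha$. Adding the two bounds and recalling $\alpha=-(f+g)^*(z^*)$ gives $f^*(z^*-w^*)+g^*(w^*)\le (f+g)^*(z^*)$, so $y^*:=z^*-w^*$ is the required splitting. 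I expect the main obstacle to be precisely the non-verticality of the separating functional, as this is the only step where the continuity assumption is indispensable; the remaining manipulations are routine conjugate bookkeeping.
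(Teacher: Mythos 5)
Your argument is correct, but note that there is nothing in the paper to compare it against: the statement is recorded as a \emph{Fact}, with the proof outsourced to the cited references (Rockafellar, Simons, Z\u{a}linescu). What you have written is essentially the classical separation proof behind Rockafellar's original Theorem 3(a), and every step that usually causes trouble is handled properly: the weak inequality $(f+g)^*(z^*)\le f^*(y^*)+g^*(z^*-y^*)$ and the reduction to the case where $(f+g)^*(z^*)$ is finite (using properness so that no $\infty-\infty$ occurs); the disjointness of $\inte B$ from $\epi h$, since a point of $\inte B$ satisfies the strict inequality $s<\alpha-g(x)$; the nonemptiness of $\inte B$, which is precisely where the continuity of $g$ at $x_0$ enters; the exclusion of a vertical separating hyperplane ($\lambda=0$), which uses both $x_0\in\dom f$ and a ball $x_0+\delta B_X$ on which $g$ is bounded above, forcing $w^*=0$ and contradicting $(w^*,\lambda)\neq(0,0)$; and the passage of the half-space inequality from $\inte B$ to all of $B$ via $B\subseteq\overline{\inte B}$, valid for convex sets with nonempty interior. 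The final bookkeeping $f^*(z^*-w^*)=h^*(-w^*)\le -c$, $g^*(w^*)\le c-\alpha$, and $y^*:=z^*-w^*$ then yields attainment. For context: of the three sources the paper cites, your route is closest to Rockafellar's, and is more elementary and self-contained than the treatments in Simons (via the Hahn--Banach--Lagrange theorem) or Z\u{a}linescu (via the general perturbation-duality formula).
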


\begin{fact}[Rockafellar]\label{SubMR}\emph{(See \cite[Theorem~A]{Rock702},
 \cite[Theorem~3.2.8]{Zalinescu}, \cite[Theorem~18.7]{Si2} or \cite[Theorem~2.1]{MSV})}
 Let $f:X\rightarrow\RX$ be a proper lower semicontinuous convex function.
Then $\partial f$ is maximally monotone.
\end{fact}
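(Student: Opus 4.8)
The plan is to follow Rockafellar's original strategy, which reduces maximality to the Br{\o}nsted--Rockafellar theorem together with the subdifferential sum rule. Monotonicity of $\partial f$ is immediate: if $x^*\in\partial f(x)$ and $y^*\in\partial f(y)$, then adding the two subgradient inequalities $\scal{y-x}{x^*}+f(x)\le f(y)$ and $\scal{x-y}{y^*}+f(y)\le f(x)$ yields $\scal{x-y}{x^*-y^*}\ge 0$. The substance is maximality: I must show that whenever $(x_0,x_0^*)$ is monotonically related to $\gra\partial f$, then $x_0^*\in\partial f(x_0)$.

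First I would normalize. Replacing $f$ by $y\mapsto f(y+x_0)-\scal{y}{x_0^*}$ translates the pair $(x_0,x_0^*)$ to the origin, so I may assume $x_0=0$ and $x_0^*=0$; the hypothesis then reads $\scal{y}{y^*}\ge 0$ for every $(y,y^*)\in\gra\partial f$, and the goal becomes $0\in\partial f(0)$, i.e.\ that $0$ minimizes $f$. Since $f$ is proper, lower semicontinuous and convex it is minorized by a continuous affine functional, so $h:=f+\tfrac12\|\cdot\|^2$ is bounded below and coercive. For each $n$ choose an approximate minimizer $x_n$ with $h(x_n)\le\inf h+1/n$; equivalently $0\in\partial_{1/n}h(x_n)$.

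Next I would invoke the Br{\o}nsted--Rockafellar theorem applied to the proper lsc convex function $h$: from $0\in\partial_{1/n}h(x_n)$ it produces $\bar x_n$ and $\bar x_n^*\in\partial h(\bar x_n)$ with $\|\bar x_n-x_n\|\le 1/\sqrt n$ and $\|\bar x_n^*\|\le 1/\sqrt n$, so $\bar x_n^*\to 0$. Because $\tfrac12\|\cdot\|^2$ is continuous everywhere, the sum rule $\partial h=\partial f+\partial(\tfrac12\|\cdot\|^2)$ holds (this is the subdifferential form of the conjugate-of-a-sum identity in Fact~\ref{f:F4}), so I can write $\bar x_n^*=y_n^*+J_n$ with $y_n^*\in\partial f(\bar x_n)$ and $J_n\in\partial(\tfrac12\|\cdot\|^2)(\bar x_n)$, where $\scal{\bar x_n}{J_n}=\|\bar x_n\|^2$ and $\|J_n\|=\|\bar x_n\|$. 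The normalized monotonicity hypothesis applied to $(\bar x_n,y_n^*)\in\gra\partial f$ gives $\scal{\bar x_n}{y_n^*}\ge 0$, whence $\|\bar x_n\|^2=\scal{\bar x_n}{\bar x_n^*}-\scal{\bar x_n}{y_n^*}\le\scal{\bar x_n}{\bar x_n^*}\le\|\bar x_n\|\,\|\bar x_n^*\|$. Thus $\|\bar x_n\|\le\|\bar x_n^*\|\to 0$, so $\bar x_n\to 0$ and $y_n^*=\bar x_n^*-J_n\to 0$.

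Finally I would pass to the limit in the subgradient inequality $f(y)\ge f(\bar x_n)+\scal{y-\bar x_n}{y_n^*}$. Since $y_n^*\to 0$ and $\bar x_n\to 0$, for every $y$ this yields $f(y)\ge\liminf_n f(\bar x_n)=:m$, and lower semicontinuity of $f$ at $0$ gives $m\ge f(0)$; properness forces $m<\infty$, so $0\in\dom f$ and $f(0)\le m\le f(y)$ for all $y$. Hence $0$ minimizes $f$, i.e.\ $0\in\partial f(0)$, which completes the maximality argument. The step I expect to be the crux is the application of the Br{\o}nsted--Rockafellar theorem, which is precisely what replaces the (generally unavailable) attainment of the infimum of $h$ in a nonreflexive space; everything else is bookkeeping with the duality map and the sum rule.
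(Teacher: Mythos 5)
Your proof is correct, but there is nothing in the paper to compare it against: the paper records this statement as a Fact and defers its proof entirely to the cited literature (Rockafellar's original article, Z\u{a}linescu, Simons, and Marques Alves--Svaiter). What you wrote is, in substance, the classical Rockafellar argument underlying the first two of those citations: after translating $(x_0,x_0^*)$ to $(0,0)$, you regularize with $\tfrac12\|\cdot\|^2$, pick approximate minimizers, upgrade them to exact subgradient pairs via the Br{\o}nsted--Rockafellar theorem (valid in any Banach space, so no reflexivity is smuggled in), split $\partial\bigl(f+\tfrac12\|\cdot\|^2\bigr)=\partial f+\partial\bigl(\tfrac12\|\cdot\|^2\bigr)$ by the Moreau--Rockafellar sum rule (legitimate because $\tfrac12\|\cdot\|^2$ is everywhere continuous and $\dom f\neq\varnothing$), and then the duality-map identities $\scal{\bar x_n}{J_n}=\|\bar x_n\|^2$ and $\|J_n\|=\|\bar x_n\|$, combined with the monotone-relatedness hypothesis, force $\bar x_n\to 0$ and $y_n^*\to 0$, after which lower semicontinuity of $f$ at $0$ finishes the argument. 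Each step checks out, including the final $\liminf$ bookkeeping (note $f(\bar x_n)$ is finite since $\bar x_n\in\dom\partial f$). For contrast, the cited proof of Marques Alves and Svaiter is genuinely different in spirit---it runs through Fenchel duality and Fitzpatrick-function machinery rather than through Br{\o}nsted--Rockafellar---so your route matches the classical sources rather than that modern one; either way, the result is exactly what the paper needs and your argument is self-contained modulo the two standard theorems you invoke.
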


\begin{fact}[Attouch-Br\'ezis]\emph{(See \cite[Theorem~1.1]{AtBrezis}
 or \cite[Remark~15.2]{Si2}).}\label{AttBre:1}
Let $f,g: X\rightarrow\RX$ be proper lower semicontinuous  and convex.
Assume that $
\bigcup_{\lambda>0} \lambda\left[\dom f-\dom g\right]$
is a closed subspace of $X$.
Then
\begin{equation*}
(f+g)^*(z^*) =\min_{y^*\in X^*} \left[f^*(y^*)+g^*(z^*-y^*)\right],\quad \forall z^*\in X^*.
\end{equation*}
\end{fact}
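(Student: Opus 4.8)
The plan is to reduce the claimed identity to a Fenchel-duality statement about a single value function and then to extract attainment from a subdifferentiability assertion, with the closed-subspace hypothesis entering only through a Baire-category step.

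First I would dispose of the trivial inequality. For every $y^*\in X^*$ and every $x\in X$, the Fenchel-Young inequality gives $f(x)+g(x)\ge\scal{x}{z^*}-f^*(y^*)-g^*(z^*-y^*)$, whence $(f+g)^*(z^*)\le f^*(y^*)+g^*(z^*-y^*)$; taking the infimum over $y^*$ shows $(f+g)^*(z^*)\le\inf_{y^*}[f^*(y^*)+g^*(z^*-y^*)]$. So the whole content is the reverse inequality together with attainment of the infimum. Next I would normalise $z^*=0$: replacing $g$ by $x\mapsto g(x)-\scal{x}{z^*}$ leaves $\dom g$ and hence the hypothesis unchanged, replaces $g^*(\cdot)$ by $g^*(\cdot+z^*)$, and turns the assertion into the Fenchel-duality equality $(f+g)^*(0)=\min_{y^*}[f^*(y^*)+g^*(-y^*)]$, i.e. $-\inf_x[f(x)+g(x)]=\min_{y^*}[f^*(y^*)+g^*(-y^*)]$.

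The main device is the marginal function $v\colon X\to\RXX$ defined by $v(u):=\inf_{x\in X}[f(x)+g(x-u)]$. It is convex, $v(0)=\inf_x[f+g]=-(f+g)^*(0)$, and a change of variable $w=x-u$ shows $\dom v=\dom f-\dom g$ and $v^*(y^*)=f^*(y^*)+g^*(-y^*)$. Consequently $\inf_{y^*}[f^*(y^*)+g^*(-y^*)]=\inf_{y^*}v^*(y^*)=-v^{**}(0)$, so the theorem is equivalent to the conjunction $v^{**}(0)=v(0)$ (no duality gap) and attainment of the dual minimum. Both follow at once from $\partial v(0)\neq\varnothing$: any $y^*\in\partial v(0)$ satisfies $v^*(y^*)=-v(0)$, so that single $y^*$ realises the minimum and simultaneously forces the equality. (The degenerate cases $\inf_x[f+g]=-\infty$ and $\dom f\cap\dom g=\varnothing$ are settled immediately by the weak inequality above.) Everything is thus reduced to showing that $v$ is subdifferentiable at the origin.

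The crux, and the only place the hypothesis is used, is establishing $\partial v(0)\neq\varnothing$. Here $\dom v=\dom f-\dom g$, and the assumption says precisely that $L:=\bigcup_{\lambda>0}\lambda\,\dom v$ is a closed subspace; equivalently $0$ lies in the algebraic core of $\dom v$ relative to the Banach space $L$. The obstacle is that $v$ need not be lower semicontinuous, so one cannot simply invoke the theorem that core equals interior for lsc convex functions. The plan is to work inside $L$ and run a Baire-category argument: writing $L=\bigcup_n n\,S$ for a suitable bounded truncation $S$ of $\dom v$ built from sublevel sets of $f$ and $g$, completeness of $L$ forces some $n\,\overline{S}$ to have nonempty interior in $L$, which bounds the lsc hull $\overline v$ above on an open subset of $L$; convexity upgrades this to continuity of $\overline v$ at an interior point and, sliding along segments toward $0\in\operatorname{core}_L(\dom v)$, to continuity of $\overline v$ at $0$, whence $\partial\overline v(0)\neq\varnothing$. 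The same category/closedness step must also yield $\overline v(0)=v(0)$, which closes the gap and promotes $\partial\overline v(0)$ to $\partial v(0)$; this no-gap assertion is the genuinely hard part, being exactly the Attouch-Br\'ezis lemma. Once it is in place, the reductions of the first two paragraphs deliver the stated exact infimal-convolution formula together with attainment.
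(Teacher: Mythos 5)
First, a note on the comparison itself: the paper states this result as a Fact, citing Attouch--Br\'ezis and Simons' book, and gives no proof of it, so your attempt can only be judged on its own merits. Your reductions are correct and are the standard ones: the weak inequality, the normalization to $z^*=0$, the marginal function $v(u):=\inf_{x}[f(x)+g(x-u)]$ with $\dom v=\dom f-\dom g$ and $v^*(y^*)=f^*(y^*)+g^*(-y^*)$, and the observation that both the duality equality and the attainment follow from $\partial v(0)\neq\varnothing$. (One minor slip: the case $\dom f\cap\dom g=\varnothing$ is \emph{not} settled by the weak inequality --- there $(f+g)^*\equiv-\infty$ while $f^*(y^*)+g^*(z^*-y^*)>-\infty$ for every $y^*$, so equality with attainment actually fails; rather, this case is excluded by the hypothesis, since a subspace contains $0$, forcing $0\in\lambda[\dom f-\dom g]$ for some $\lambda>0$.)

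The genuine gap is at the crux, exactly where you flagged it, and it cannot be waved away. Applying Baire category to the closures $n\overline{S}$ controls only the lower semicontinuous hull $\overline{v}$; from $\partial\overline{v}(0)\neq\varnothing$ you obtain attainment of $\min_{y^*}v^*(y^*)$ at the value $-\overline{v}(0)$, whereas the theorem demands the value $-v(0)$. The missing identity $\overline{v}(0)=v(0)$ is not a byproduct of ``the same category/closedness step'': category applied to closures can never produce it, as evidenced by the fact that your outline nowhere uses lower semicontinuity of $f$ and $g$, while the theorem is false without that hypothesis (take $f=\phi$ and $g=-\phi$ for an everywhere-defined discontinuous linear functional $\phi$ on $X$: then $\dom f-\dom g=X$ is a closed subspace, yet $f^*\equiv g^*\equiv+\infty$ while $(f+g)^*(0)=0$). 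The standard repair --- which is the actual content of the Attouch--Br\'ezis argument --- is to run category on the truncations themselves, $S_n:=\big\{u \mid \exists x,\ \|x\|\le n,\ f(x)\le n,\ g(x-u)\le n\big\}$, and to prove that each $S_n$ is \emph{ideally convex} (CS-closed) in the Banach space $L$: if $u=\sum_k t_k u_k$ is a norm-convergent convex series with $u_k\in S_n$ and witnesses $x_k$, then $x:=\sum_k t_k x_k$ converges absolutely because $\|x_k\|\le n$, and the countable Jensen inequality --- this is precisely where lower semicontinuity of $f$ and $g$ enters --- gives $f(x)\le n$ and $g(x-u)\le n$, hence $u\in S_n$. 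Ideal convexity in a complete space yields $\inte_L\overline{S_n}=\inte_L S_n$, and since $u\in S_n$ implies $v(u)\le 2n$, this bounds $v$ itself (not merely $\overline{v}$) above on a nonempty open subset of $L$. Your own sliding argument then gives continuity of $v|_L$ at $0$ and $\partial v(0)\neq\varnothing$ directly, with no separate no-gap lemma needed.
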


Fact \ref{SubMR} above relates a convex function with maximal
monotonicity. Fitzpatrick functions go in the opposite way: from
maximally monotone operators to convex functions.

\begin{fact}[Fitzpatrick]\label{FFc}
\emph{(See {\cite[Corollary~3.9]{Fitz88}} and
\cite{Bor1,BorVan}.)} \label{f:Fitz} Let $A\colon X\To X^*$ be
maximally monotone.
Then for every $(x,x^*)\in X\times X^*$, the inequality $\scal{x}{x^*}\leq F_A(x,x^*)$
is true,
and the equality holds if and only if $(x,x^*)\in\gra A$.
\end{fact}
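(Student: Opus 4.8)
The plan is to reduce the entire statement to one algebraic identity and then invoke maximality exactly once. The starting observation is that each summand defining $F_A$ can be rewritten by completing the pairing: for any $(a,a^*)\in\gra A$ a direct expansion gives
\[
\scal{x}{a^*}+\scal{a}{x^*}-\scal{a}{a^*}=\scal{x}{x^*}-\scal{x-a}{x^*-a^*}.
\]
Taking the supremum over $\gra A$ therefore yields the identity
\[
F_A(x,x^*)=\scal{x}{x^*}-\inf_{(a,a^*)\in\gra A}\scal{x-a}{x^*-a^*},
\]
so that the whole assertion becomes a question about the sign of $\inf_{(a,a^*)\in\gra A}\scal{x-a}{x^*-a^*}$. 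First I would record the elementary verification of this expansion, which is routine.

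Next I would split according to whether $(x,x^*)$ lies in $\gra A$. If $(x,x^*)\in\gra A$, then monotonicity of $A$ forces $\scal{x-a}{x^*-a^*}\ge0$ for every $(a,a^*)\in\gra A$, while the choice $(a,a^*)=(x,x^*)$ attains the value $0$; hence the infimum equals $0$ and the identity gives $F_A(x,x^*)=\scal{x}{x^*}$. This disposes of both the inequality and the ``if'' direction of the equality at graph points, using monotonicity alone.

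The remaining and essential point is the strict inequality off the graph, which simultaneously delivers the ``only if'' direction, and this is where maximality enters. If $(x,x^*)\notin\gra A$, then $\gra A\cup\{(x,x^*)\}$ would be a proper monotone extension of $A$ unless $(x,x^*)$ fails to be monotonically related to $\gra A$; since $A$ is maximally monotone, there must exist $(a_0,a_0^*)\in\gra A$ with $\scal{x-a_0}{x^*-a_0^*}<0$. Consequently the infimum above is strictly negative, and the identity yields $F_A(x,x^*)>\scal{x}{x^*}$. Combining the two cases proves $\scal{x}{x^*}\le F_A(x,x^*)$ for all $(x,x^*)$, with equality precisely on $\gra A$.

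I expect the only genuine subtlety to be the correct use of maximality in the last step: one must translate ``no proper monotone extension'' into the statement that any point outside $\gra A$ violates monotone relatedness, thereby supplying the strictly negative summand. It is worth noting that this is exactly the step that breaks down for a merely monotone, non-maximal $A$, for which the asserted inequality can fail. Everything else is the bookkeeping of the expansion and a single appeal to monotonicity.
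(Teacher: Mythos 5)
Your proof is correct and complete. Since the paper states this result as a Fact quoted from the literature (Fitzpatrick's original paper and the cited monographs) and provides no proof of its own, there is nothing in the paper to diverge from; your argument is precisely the standard one. The key identity you use,
\[
F_A(x,x^*)=\scal{x}{x^*}-\inf_{(a,a^*)\in\gra A}\scal{x-a}{x^*-a^*},
\]
is exactly the observation underlying the paper's equation (2.4) (the equivalence of the two descriptions of the enlargement $A_\varepsilon$), and your use of maximality --- translating ``no proper monotone extension'' into ``every point off the graph fails to be monotonically related, hence contributes a strictly negative term to the infimum'' --- is the one genuinely nontrivial step, and you execute it correctly: the union $\gra A\cup\{(x,x^*)\}$ is monotone precisely when $(x,x^*)$ is monotonically related to $\gra A$, so maximality rules this out for $(x,x^*)\notin\gra A$. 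Your closing remark is also accurate: for a monotone but non-maximal $A$ the ``only if'' direction fails (any point monotonically related to but outside $\gra A$ satisfies $F_A(x,x^*)\le\scal{x}{x^*}$), which is why the paper is careful to assume maximality here while using only monotonicity in the definition of the enlargement.
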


It was pointed out in \cite[Problem 31.3]{Si2} that it is unknown
whether $\overline{\dom A}$ is necessarily convex when $A$ is
maximally monotone and $X$ is not reflexive. When $A$ is of type (FPV), the
question was answered positively by using $F_A$.

\begin{fact}[Simons]
\emph{(See \cite[Theorem~44.2]{Si2}.)}
\label{f:referee02c}
Let $A:X\To X^*$ be  of type (FPV). Then
$\overline{\dom A}=\overline{P_X\left[\dom F_A\right]}$ and $\overline{\dom A}$ is convex.
\end{fact}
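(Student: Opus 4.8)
The plan is to prove the two inclusions of the set equality separately and then read off convexity as a free consequence. The easy inclusion $\overline{\dom A}\subseteq\overline{P_X[\dom F_A]}$ is immediate from the Fitzpatrick identity \eqref{FFb}: if $x\in\dom A$, pick $x^*\in Ax$; then $F_A(x,x^*)=\scal{x}{x^*}<\pinf$, so $(x,x^*)\in\dom F_A$ and $x\in P_X[\dom F_A]$. Taking closures gives the inclusion.

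The substance is the reverse inclusion $\overline{P_X[\dom F_A]}\subseteq\overline{\dom A}$, for which it suffices to show $P_X[\dom F_A]\subseteq\overline{\dom A}$. I would argue by contradiction: suppose $(z,z^*)\in\dom F_A$ but $z\notin\overline{\dom A}$. By \eqref{Enl:1}, membership $(z,z^*)\in\dom F_A$ means $(z,z^*)\in\gra A_{\ve}$ for some $\ve\ge 0$, i.e. $\scal{z-a}{z^*-a^*}\ge-\ve$ for all $(a,a^*)\in\gra A$. The goal is to manufacture from this a pair that is \emph{genuinely} monotonically related to $\gra A$ inside some open convex set meeting $\dom A$, so that the defining implication of type (FPV) forces its first coordinate into $\dom A$ and contradicts $z\notin\overline{\dom A}$. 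Concretely, I would try to upgrade the slack $-\ve$ to a true monotone inequality by perturbing the dual coordinate, replacing $z^*$ by $z^*+\lambda y^*$ with $\lambda>0$ large and $y^*$ a direction coming from the geometry of $z$ relative to $\dom A$, exploiting the identity $\scal{z-a}{(z^*+\lambda y^*)-a^*}=\scal{z-a}{z^*-a^*}+\lambda\scal{z-a}{y^*}$.

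This last step is where the main obstacle lies. The definition of type (FPV) requires the open convex set $U$ to satisfy $z\in U$ and $U\cap\dom A\ne\varnothing$ simultaneously; but because $z$ sits at positive distance from $\dom A$, any admissible $U$ must be large enough to reach $\dom A$, and one then cannot guarantee that $\scal{z-a}{y^*}$ is bounded below by a positive constant uniformly over $a\in U\cap\dom A$ by a single separating functional. Indeed $\dom A$ need not be convex, so no global separation of $z$ from $\dom A$ is available, and a naive half-space choice of $U$ cannot contain $z$ while forcing positivity. The fix I would pursue is a localization argument along the segment joining a point $a_0\in\dom A$ to $z$: apply the (FPV) implication on a carefully shrinking family of open convex sets to produce points of $\dom A$ accumulating at, and then strictly beyond, the last point of $[a_0,z]\cap\overline{\dom A}$, contradicting the maximality of that point. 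This is the technical heart, and it is precisely the delicate (FPV) analysis one expects to borrow from Simons.

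Finally, convexity comes for free once the identity is established. Since $F_A$ is a supremum of affine functions on $X\times X^*$, it is convex, so $\dom F_A$ is convex and hence its linear image $P_X[\dom F_A]$ is convex; therefore $\overline{P_X[\dom F_A]}$ is convex, and by the proven equality so is $\overline{\dom A}$.
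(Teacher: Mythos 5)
This statement is a quoted result in the paper (Fact~\ref{f:referee02c} is cited from \cite[Theorem~44.2]{Si2}); the paper contains no proof of it, so your attempt has to stand entirely on its own. The parts of your argument that are complete are correct: $\overline{\dom A}\subseteq\overline{P_X[\dom F_A]}$ follows from \eqref{FFb} exactly as you say; the identification of $\dom F_A$ with $\bigcup_{\ve\geq 0}\gra A_\ve$ via \eqref{FFa} and \eqref{Enl:1} is valid; and, once the set equality is in hand, convexity of $\overline{\dom A}$ does come for free from convexity of $\dom F_A$.

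The genuine gap is that the reverse inclusion $P_X[\dom F_A]\subseteq\overline{\dom A}$ --- which is the entire content of the Fact --- is never established. You set up the right mechanism (perturb $z^*$ to $z^*+\lambda y^*$ and feed the resulting pair to the (FPV) implication on a suitable open convex $U$), and you correctly diagnose the obstruction, but you then explicitly defer ``the technical heart'' to Simons; that reduces the Fact to precisely the statement being proved. Moreover, the localization you gesture at does not, as described, resolve the tension you yourself identified. Concretely: take $a_0\in\dom A$, let $p=a_0+\tau(z-a_0)$ be the last point of $[a_0,z]$ in $\overline{\dom A}$, and fix $t>\tau$ with $z_t:=a_0+t(z-a_0)\notin\overline{\dom A}$ (a dual partner $z_t^*$ with $(z_t,z_t^*)\in\dom F_A$ exists by convexity of $\dom F_A$). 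If $U$ is the open ball of radius $r$ centred at $p$ and $y^*$ is a norm-one functional with $\scal{z-a_0}{y^*}=\|z-a_0\|$, then for $s\in U\cap\dom A$ one only gets $\scal{z_t-s}{y^*}\geq (t-\tau)\|z-a_0\|-r$, so uniform positivity forces $r<(t-\tau)\|z-a_0\|$, while $z_t\in U$ forces $(t-\tau)\|z-a_0\|<r$: the two requirements are contradictory. Since $\dom A$ is not known to be convex, no separation argument supplies a better $y^*$, and this is exactly why Simons' actual proof needs a genuinely different, more delicate argument. As it stands, your proposal establishes only the trivial half of the Fact.
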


We observe that when $A$ is of type (FPV) then also $\dom A_{\ve}$ has convex closure.

\begin{remark}
Let  $A$ be of type (FPV)  and fix $\varepsilon\geq0$.
Then by \eqref{Enl:1}, Fact~\ref{f:Fitz} and Fact \ref{f:referee02c}, we have
$\dom A\subseteq\dom A_{\Elag}\subseteq P_X\left[\dom F_A\right]
\subseteq\overline{\dom A}$. Thus we obtain
\[
\overline{\dom A}= \overline{\left[\dom A_{\Elag}\right]}=\overline{P_X\left[\dom F_A\right]},
\]
and this set is convex because $\dom F_A$ is convex. As a result, for
every $A$ of type (FPV) it holds that $\overline{\dom
  A}=\overline{\left[\dom A_{\Elag}\right]}$ and this set is convex.
\end{remark}

We recall below some necessary conditions for a maximally monotone operator to be of type (FPV).

\begin{fact}[Simons]
\emph{(See \cite[Theorem~46.1]{Si2}.)}
\label{f:referee01}
Let $A:X\To X^*$ be a maximally monotone linear relation.
Then $A$ is of type (FPV).
\end{fact}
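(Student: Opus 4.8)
The plan is to unwind the definition of type (FPV) and reduce it, through the Fitzpatrick function, to a single global inequality. Fix an open convex set $U\subseteq X$ with $U\cap\dom A\neq\varnothing$, and suppose $z\in U$ while $(z,z^*)$ is monotonically related to $\gra A\cap(U\times X^*)$; the goal is $(z,z^*)\in\gra A$. By Fact~\ref{FFc} this is equivalent to $F_A(z,z^*)\leq\langle z,z^*\rangle$, and since the reverse inequality always holds it suffices to establish this one. Written out, the monotone-relation hypothesis is exactly the \emph{localized} Fitzpatrick inequality
\[
\sup_{(a,a^*)\in\gra A,\ a\in U}\big(\langle z,a^*\rangle+\langle a,z^*\rangle-\langle a,a^*\rangle\big)\leq\langle z,z^*\rangle,
\]
so the whole theorem amounts to deleting the side constraint $a\in U$ from this supremum, that is, to promoting the local inequality to the global one that defines $F_A(z,z^*)$.

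The two resources I would exploit are the linearity of $\gra A$ and the sign it forces. Because $\gra A$ is a subspace, $\langle a,a^*\rangle\geq0$ for every $(a,a^*)\in\gra A$, and for any fixed $(b,b^*)\in\gra A$ with $b\in U$ (available since $U\cap\dom A\neq\varnothing$) every combination $s(a,a^*)+(b,b^*)$ again lies in $\gra A$, with first coordinate inside the open set $U$ whenever $|s|$ is small. Substituting such points into the hypothesis produces, for each direction $(a,a^*)$, a convex quadratic in $s$ that is nonnegative near $s=0$; the content of the theorem is to pass from this germ of nonnegativity to nonnegativity in the full range. To organize this passage globally rather than pointwise, I would compute the conjugate of $g:=F_A+\iota_{\overline{U}\times X^*}$ by the Attouch--Br\'ezis theorem (Fact~\ref{AttBre:1})---the constraint qualification being guaranteed by $U\cap\dom A\neq\varnothing$, which forces $\bigcup_{\lambda>0}\lambda[\dom F_A-\overline U\times X^*]$ to be a closed subspace---so that the localized inequality for $g$ is converted, via an attained exact decomposition and the quadratic homogeneity of $F_A$ on the subspace $\gra A$, into the sought bound on $F_A(z,z^*)$ itself.

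The main obstacle is precisely the transition from local to global through the degenerate directions: those $(a,a^*)\in\gra A$ with $\langle a,a^*\rangle=0$, i.e.\ the skew part of $A$, where the quadratic in $s$ collapses to an affine function and nonnegativity near $s=0$ carries no information about large $s$. Controlling these is where the openness of $U$ and the closedness of the subspace $\gra A$ must be used in tandem, together with maximality to rule out an escaping skew direction. A secondary but genuine difficulty is that the duality step naturally pushes the dual variable into the bidual $X^{**}$; since $X$ is not assumed reflexive, the exactness in Fact~\ref{AttBre:1} is exactly what keeps the minimizer in $X^*$ and prevents the argument from leaking into $X^{**}$.
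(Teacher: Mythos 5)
The paper itself does not prove this statement; it is imported verbatim from \cite[Theorem~46.1]{Si2}, so your attempt stands or falls on its own merits rather than by comparison with an in-paper argument. Your opening reduction is correct: by Fact~\ref{FFc} it suffices to prove $F_A(z,z^*)\le\langle z,z^*\rangle$, and the hypothesis is precisely the same supremum restricted to $a\in U$. But there are two genuine gaps. First, you conjugate the wrong function. The hypothesis controls $\sup\big\{\langle z,a^*\rangle+\langle a,z^*\rangle-\langle a,a^*\rangle \,:\, (a,a^*)\in\gra A,\ a\in U\big\}$, which is the value at $(z^*,z)$ of the conjugate of $\langle\cdot,\cdot\rangle+\iota_{\gra A}+\iota_{U\times X^*}$. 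It gives no control whatsoever over $(F_A+\iota_{\overline U\times X^*})^*(z^*,z)$, whose defining supremum runs over all of $\dom F_A\cap(\overline U\times X^*)$: in general $\dom F_A$ is strictly larger than $\gra A$ (by Fact~\ref{CElL:1}, equality of these two sets is exactly non-enlargeability), and on $\dom F_A\setminus\gra A$ the hypothesis says nothing. Second, by your own account the proof is unfinished: the pointwise mechanism you set up (a convex quadratic in $s$, nonnegative for small $|s|$) cannot be pushed through, because a convex quadratic --- or, in the skew directions, an affine function --- that is nonnegative near $s=0$ need not be nonnegative globally; saying that openness, closedness and maximality ``must be used in tandem'' names the difficulty but does not resolve it.

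The constructive news is that your skeleton does close, and the obstacle you flag dissolves, once the function is corrected. Put $g:=\langle\cdot,\cdot\rangle+\iota_{\gra A}$: it is proper and convex by Fact~\ref{f:referee} --- this is exactly where linearity (convexity of $\gra A$) enters --- and lower semicontinuous because maximally monotone graphs are norm-closed. The hypothesis says $(g+\iota_{U\times X^*})^*(z^*,z)\le\langle z,z^*\rangle$, and the bound survives replacing $U$ by $\overline U$: for $(b,b^*)\in\gra A$ with $b\in U$ and any $(w,w^*)\in\gra A\cap(\overline U\times X^*)$, the segment joining them lies in $\gra A$ with first coordinates in $[b,w)\subseteq U$, and $\langle\cdot,\cdot\rangle$ is continuous along it. Your constraint-qualification observation is sound here: $\dom g-(\overline U\times X^*)=\big[\dom A-\overline U\big]\times X^*$ contains a ball about the origin, so $\bigcup_{\lambda>0}\lambda\big[\dom g-(\overline U\times X^*)\big]=X\times X^*$. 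Attouch--Br\'ezis (Fact~\ref{AttBre:1}) then gives $(y^*,y^{**})\in X^*\times X^{**}$ with $(g+\iota_{\overline U\times X^*})^*(z^*,z)=g^*(z^*-y^*,z-y^{**})+\sigma_{\overline U}(y^*)+\iota_{\{0\}}(y^{**})$; the term $\iota_{\{0\}}$ pins $y^{**}=0$ (your bidual remark, made precise), and $g^*(z^*-y^*,z)=F_A(z,z^*-y^*)$. So there exists $y^*\in X^*$ with $F_A(z,z^*-y^*)+\sigma_{\overline U}(y^*)\le\langle z,z^*\rangle$. Since $z\in\overline U$ gives $\sigma_{\overline U}(y^*)\ge\langle z,y^*\rangle$, we obtain $F_A(z,z^*-y^*)\le\langle z,z^*-y^*\rangle$, so Fact~\ref{FFc} forces $(z,z^*-y^*)\in\gra A$ with equality; substituting the equality back yields $\sigma_{\overline U}(y^*)\le\langle z,y^*\rangle$, i.e.\ $y^*$ attains its supremum over $\overline U$ at the interior point $z$, whence $y^*=0$ and $(z,z^*)\in\gra A$. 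Maximality is used only through the equality case of Fact~\ref{FFc}, and no case analysis on skew directions is needed.
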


\begin{fact}[Fitzpatrick-Phelps and Verona-Verona]
\emph{(See \cite[Corollary~3.4]{FitzPh}, \cite[Theorem~3]{VV1} or \cite[Theorem~48.4(d)]{Si2}.)}
\label{f:referee0d}\index{subdifferential operator}
Let $f:X\rightarrow\RX$ be proper, lower semicontinuous, and convex.\index{type (FPV)}
Then $\partial f$ is of type (FPV).
\end{fact}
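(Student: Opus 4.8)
The plan is to verify the defining implication of type (FPV) directly for $A=\partial f$, using that $\partial f$ is maximally monotone (Fact~\ref{SubMR}) together with the Br{\o}nsted--Rockafellar theorem recorded in the preliminaries. Fix an open convex set $U\subseteq X$ with $U\cap\dom\partial f\neq\varnothing$, a point $x\in U$, and an $x^*\in X^*$ such that $(x,x^*)$ is monotonically related to $\gra\partial f\cap(U\times X^*)$; the goal is $x^*\in\partial f(x)$. Replacing $f$ by $z\mapsto f(z)-\scal{z}{x^*}$, which shifts $\partial f$ by $x^*$ while leaving $U$, the domains, and the monotone relation intact, I may assume $x^*=0$. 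The statement to prove then becomes that $x$ is a global minimizer of $f$, i.e. $0\in\partial f(x)$, and under this normalization the hypothesis reads
\begin{equation*}
\scal{y-x}{y^*}\geq 0\qquad\text{for all }y\in U\cap\dom\partial f\text{ and all }y^*\in\partial f(y).
\end{equation*}

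Because a local minimizer of a convex function is automatically a global minimizer, it suffices to show that $x$ minimizes $f$ over $U$; and since $f(y)=+\infty$ when $y\notin\dom f$, it is enough to treat $y\in U\cap\dom f$. For such $y$ the whole segment $w_t:=x+t(y-x)$, $t\in[0,1]$, lies in the convex set $U\cap\dom f$, and the one-variable function $t\mapsto f(w_t)$ is finite and convex. By convexity, $f(y)\geq f(x)+f'(x;y-x)$, so the desired inequality $f(x)\leq f(y)$ would follow once I establish the sign condition
\begin{equation*}
f'(x;y-x)\geq 0\qquad\text{for every }y\in U\cap\dom f.
\end{equation*}
Thus the whole problem reduces to controlling the one-sided directional derivative of $f$ at $x$ in directions pointing into $U\cap\dom f$.

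To obtain this sign I would feed the monotone relation into the Br{\o}nsted--Rockafellar theorem. For small $t>0$ the point $w_t\in U\cap\dom f$ admits, for each $\delta>0$, an approximate subgradient $w_t^*\in\partial_\delta f(w_t)$, nonemptiness of $\partial_\delta f$ on $\dom f$ being standard. Since $\partial f$ is of type (BR), there is an exact pair $(b,b^*)\in\gra\partial f$ with $\|b-w_t\|$ and $\|b^*-w_t^*\|$ both small; for $t,\delta$ small enough $b\in U$, so the normalized hypothesis applies and yields $\scal{b-x}{b^*}\geq 0$. Writing $b-x=t(y-x)+(b-w_t)$ and expanding, this inequality transfers, in the limit $\delta\to0$ and then $t\to0^{+}$, into $f'(x;y-x)\geq 0$. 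I expect the main obstacle to be exactly this last limiting step: in a nonreflexive space a point of $\dom f$ need not be a point of subdifferentiability, so one cannot use exact subgradients along the segment itself and must instead manufacture them nearby via Br{\o}nsted--Rockafellar, keeping the perturbation error $\|b-w_t\|\,\|b^*\|$ under control as $\delta\to0$ even though the approximate subgradients $w_t^*$ may grow in norm. Once the directional-derivative sign is secured, local minimality of $x$ on $U$, and hence global minimality and the conclusion $x^*\in\partial f(x)$, follow immediately.
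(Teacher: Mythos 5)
The paper itself offers no proof of this Fact (it is quoted from Fitzpatrick--Phelps, Verona--Verona, and Simons), so your argument must stand on its own; as written it has two genuine gaps, one structural and one at precisely the step you flag as ``the main obstacle.''

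The structural gap: after normalizing $x^*=0$ you reduce the problem to proving $f'(x;y-x)\geq 0$ for $y\in U\cap\dom f$, asserting that the segment $w_t=x+t(y-x)$ lies in $U\cap\dom f$ and that $t\mapsto f(w_t)$ is finite. This silently presupposes $x\in\dom f$. Nothing in the hypotheses gives that: $x$ is only known to lie in $U$ and to be monotonically related to $\gra\partial f\cap (U\times X^*)$, and ruling out $f(x)=+\infty$ is part of the \emph{conclusion} (since $0\in\partial f(x)$ forces $x\in\dom f$). If $f(x)=+\infty$, the segment need not meet $\dom f$ near $x$, the directional derivative $f'(x;y-x)$ is meaningless, and your reduction collapses. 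In the known proofs this case is handled by a separate argument (roughly: if $x\notin\overline{\dom f}$, separate $x$ from $\overline{\dom f}$, add large multiples of the separating functional to approximate subgradients, and use Br{\o}nsted--Rockafellar to produce a point of the \emph{local} graph violating the monotone relation); your proposal never confronts it.

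The analytic gap: the deferred limit is not bookkeeping --- the estimate it needs is false in general. Following your plan, $w_{t,\delta}^*\in\partial_\delta f(w_t)$ and Br{\o}nsted--Rockafellar give $(b,b^*)\in\gra\partial f$ with $\|b-w_t\|\leq\delta/\lambda$, $\|b^*-w_{t,\delta}^*\|\leq\lambda$; expanding $\scal{b-x}{b^*}\geq 0$ leaves error terms of order $(\delta/\lambda)\,\|w_{t,\delta}^*\|$ and $t\,\|y-x\|\,\lambda$, so you need simultaneously $\lambda\to 0$ and $\delta\|w_{t,\delta}^*\|/\lambda\to 0$. But the product $\delta\|w_{t,\delta}^*\|$ need not tend to $0$: when $\partial f(w_t)=\varnothing$ every $\delta$-subgradient at $w_t$ can have norm of order $1/\delta$. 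A concrete instance, already in $\RR^2$: $f(u,v)=-\sqrt{v}+\iota_{\{v\geq 0\}}(u,v)$, with the segment from $(0,0)$ to $(1,0)$; at each $w_t=(t,0)$ one checks that $\partial_\delta f(w_t)=\{0\}\times\left]-\infty,-\tfrac{1}{4\delta}\right]$, so $\delta\|w_{t,\delta}^*\|\geq\tfrac14$ for every choice, and no choice of $\lambda$ makes both error terms small. Since, before the theorem is proved, nothing prevents the segment $[x,y]$ from consisting of such points, the scheme ``approximate subgradient $+$ Br{\o}nsted--Rockafellar $+$ naive expansion'' cannot close as designed. This is exactly why the cited proofs (Verona--Verona; Simons, Theorem~48.4) are built on different machinery rather than on this direct perturbation estimate.
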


\begin{fact}\emph{(See \cite[Corollary~3.3]{Yao2}.)}\label{domain:L1}
Let $A:X\To X^*$ be  a maximally monotone linear relation, and $f:X\rightarrow\RX$ be a
proper lower semicontinuous convex function with
 $\dom  A\cap\inte\dom\partial f\neq\varnothing$.
Then $A+\partial f$ is of type  $(FPV)$.
\end{fact}

\begin{fact}[Phelps-Simons]\emph{(See \cite[Corollary 2.6 and Proposition~3.2(h)]{PheSim}.)}\label{F:1}
Let $A\colon X\rightarrow X^*$ be  monotone and linear. Then $A$ is
maximally monotone and continuous.
\end{fact}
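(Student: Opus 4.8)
The plan is to extract from monotonicity one purely algebraic fact and to use it for both conclusions. First I would note that, since $A$ is linear, everywhere defined, and $A0=0$, the monotonicity inequality $\scal{x-y}{Ax-Ay}\ge0$ is equivalent to $\scal{u}{Au}\ge0$ for every $u\in X$. Then I would establish what I will call the \emph{key reduction}: if $(x,x^*)\in X\times X^*$ is monotonically related to $\gra A$, i.e. $\scal{x-z}{x^*-Az}\ge0$ for all $z\in X$, then $x^*=Ax$. To see this I would substitute $z=x-tw$ with $t>0$ and $w\in X$ arbitrary; expanding by linearity and using $\scal{w}{Aw}\ge0$ gives $t\scal{w}{x^*-Ax}+t^2\scal{w}{Aw}\ge0$, and dividing by $t$ and letting $t\downarrow0$ yields $\scal{w}{x^*-Ax}\ge0$ for all $w$. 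Replacing $w$ by $-w$ gives equality, so $x^*-Ax$ annihilates $X$ and hence $x^*=Ax$.

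Maximal monotonicity is then immediate: $A$ is monotone by hypothesis, and the key reduction says exactly that no pair monotonically related to $\gra A$ can lie outside $\gra A$, so $A$ admits no proper monotone extension. Notably this step uses neither continuity nor completeness of the spaces, only linearity (to justify $A(x-tw)=Ax-tAw$) and positivity of the quadratic term.

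For continuity I would invoke the closed graph theorem, which applies because $A$ is everywhere defined and linear between the Banach spaces $X$ and $X^*$; it therefore suffices to show $\gra A$ is norm-closed. So let $x_n\to x$ in $X$ and $Ax_n\to y^*$ in $X^*$. For each fixed $z\in X$, monotonicity gives $\scal{x_n-z}{Ax_n-Az}\ge0$, and since the duality pairing is continuous with respect to norm convergence in both arguments, passing to the limit gives $\scal{x-z}{y^*-Az}\ge0$ for every $z$. Thus $(x,y^*)$ is monotonically related to $\gra A$, and the key reduction forces $y^*=Ax$. Hence $\gra A$ is closed and $A$ is bounded.

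The whole argument is concentrated in the key reduction, so that is where I would focus attention. The only genuinely analytic input is the closed graph theorem, which supplies boundedness; maximality comes for free once the algebraic step is in place. The one point deserving care is the limit $t\downarrow0$: it is legitimate precisely because $\scal{w}{Aw}\ge0$ keeps the quadratic term nonnegative, so that dividing the inequality by $t$ and discarding that term loses no information. I do not anticipate a serious obstacle beyond verifying these routine passages.
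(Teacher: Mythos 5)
Your proof is correct, and it is worth noting that the paper itself offers no proof of this statement: it is quoted as a Fact with a citation to Phelps--Simons (Corollary 2.6 and Proposition 3.2(h) of their paper), and your argument is essentially the classical one behind that citation --- the quadratic perturbation $z=x-tw$ showing that every pair monotonically related to $\gra A$ already lies in $\gra A$, which gives maximality, followed by the closed graph theorem (legitimate since $X^*$ is complete) with the same key reduction certifying closedness of the graph. One correction to your closing commentary, since you single it out as the delicate point: the passage $t\downarrow 0$ does not rely on $\scal{w}{Aw}\geq 0$ at all; after dividing by $t$, the term $t\scal{w}{Aw}$ tends to $0$ regardless of its sign, whereas for a \emph{fixed} $t>0$ nonnegativity of that term would let you conclude nothing, because the inequality reads $\scal{w}{x^*-Ax}\geq -t\scal{w}{Aw}$ with a nonpositive right-hand side, so ``discarding the term'' does lose information. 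It is the limit, not the sign, that carries the step; the positivity $\scal{u}{Au}\geq 0$ enters only as the linear restatement of monotonicity at the start.
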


\begin{fact}\emph{(See \cite[Theorem~4.2]{BWY3} or \cite[Lemma~1.5]{MarSva2}.)}\label{affine:L1}
Let $A:X\To X^*$ be  maximally monotone such that $\gra A$ is convex.
Then $\gra A$ is affine.
\end{fact}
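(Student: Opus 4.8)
The plan is to reduce everything to the following normalized statement: after translating the graph so that one of its points sits at the origin, $\gra A$ is a linear subspace. Concretely, fix $(x_0,x_0^*)\in\gra A$ and consider the operator $B$ with $\gra B=\gra A-(x_0,x_0^*)$. A routine check shows $B$ is again maximally monotone, $\gra B$ is convex, and $(0,0)\in\gra B$. Since a convex set containing the origin is automatically a linear subspace once it is invariant under multiplication by every real scalar (convexity upgrades scalar invariance to closure under addition, because $u+v=2\cdot\tfrac12(u+v)$), it suffices to prove that for each $(x,x^*)\in\gra B$ and each $t\in\RR$ one has $t(x,x^*)\in\gra B$. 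Convexity already yields this for $t\in[0,1]$, via $t(x,x^*)=t(x,x^*)+(1-t)(0,0)$; the real content is to reach scalars outside $[0,1]$.

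To obtain the remaining scalings I would invoke maximality: it is enough to show that $t(x,x^*)$ is monotonically related to $\gra B$. Fixing an arbitrary $(u,u^*)\in\gra B$, this amounts to
\[
\psi(t):=t^2\scal{x}{x^*}-t\big(\scal{x}{u^*}+\scal{u}{x^*}\big)+\scal{u}{u^*}\ge0 .
\]
Monotonicity against the origin gives $\scal{x}{x^*}\ge0$ and $\scal{u}{u^*}\ge0$, monotonicity between $(x,x^*)$ and $(u,u^*)$ gives $\psi(1)\ge0$, and the convexity argument gives $\psi\ge0$ on $[0,1]$; but none of this prevents $\psi$ from becoming negative outside $[0,1]$, so a naive sign analysis of the single quadratic $\psi$ does not close the argument. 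This is the main obstacle.

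The idea to overcome it is to view $\psi$ as a one-dimensional slice of a homogeneous binary quadratic form. Set
\[
g(a,b):=a^2\scal{x}{x^*}+ab\big(\scal{x}{u^*}+\scal{u}{x^*}\big)+b^2\scal{u}{u^*},
\]
so that $g(t,-1)=\psi(t)$ and, crucially, $g$ is homogeneous of degree $2$. For any $(a,b)$ with $|a|+|b|\le1$, using positive and negative parts I set $P=a^+(x,x^*)+b^+(u,u^*)$ and $P'=a^-(x,x^*)+b^-(u,u^*)$; both lie in the triangle $\conv\{(0,0),(x,x^*),(u,u^*)\}\subseteq\gra B$ because their coefficients are nonnegative and sum to at most $|a|+|b|\le1$, and $P-P'=a(x,x^*)+b(u,u^*)$. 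Monotonicity of $B$ applied to $P$ and $P'$ says precisely that the duality pairing of the two coordinates of $P-P'$ is nonnegative, and expanding shows this equals $g(a,b)$. Hence $g\ge0$ on the neighbourhood $\{|a|+|b|\le1\}$, and homogeneity forces $g\ge0$ on all of $\RR^2$; in particular $\psi(t)=g(t,-1)\ge0$ for every $t\in\RR$.

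Since $(u,u^*)\in\gra B$ was arbitrary, $t(x,x^*)$ is monotonically related to $\gra B$, so maximality yields $t(x,x^*)\in\gra B$ for all $t\in\RR$. Thus $\gra B$ is invariant under all real scalings, hence a linear subspace, and undoing the translation shows $\gra A$ is affine. The only points requiring care are the verification that $B$ inherits maximal monotonicity and convexity of its graph, and that the chosen $P,P'$ genuinely remain in the triangle for every sign pattern of $(a,b)$; both are elementary, and the homogeneity step is the conceptual heart of the proof.
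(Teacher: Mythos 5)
Your proof is correct, and it is worth noting that the paper itself contains no proof of this statement: it is recorded as a Fact imported from \cite[Theorem~4.2]{BWY3} and \cite[Lemma~1.5]{MarSva2}, so your argument stands as a self-contained proof of something the paper only cites. Every step checks out: the translated operator $B$ is maximally monotone with convex graph containing $(0,0)$; a convex set containing the origin and stable under all real scalings is indeed a linear subspace; for $|a|+|b|\le 1$ the points $P=a^+(x,x^*)+b^+(u,u^*)$ and $P'=a^-(x,x^*)+b^-(u,u^*)$ genuinely lie in $\conv\{(0,0),(x,x^*),(u,u^*)\}\subseteq\gra B$; monotonicity between $P$ and $P'$ is exactly the inequality $g(a,b)=\scal{ax+bu}{ax^*+bu^*}\ge 0$; and homogeneity upgrades this to $g\ge 0$ on all of $\RR^2$, so $\psi(t)=g(t,-1)\ge 0$ and maximality finishes the argument.

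Your route differs in packaging, though not in underlying mechanism, from the proofs in the sources the paper cites. There one shows directly that the reflection $2(y,y^*)-(x,x^*)$ (more generally, each affine combination $(1-t)(x_0,x_0^*)+t(x_1,x_1^*)$) is monotonically related to the graph by a single targeted use of monotonicity: since the graph is convex, $\tfrac{1}{2}(x,x^*)+\tfrac{1}{2}(u,u^*)$ lies in it, and monotonicity of this point against $(y,y^*)$, multiplied by $4$, is precisely the required inequality. In your normalized setting the same shortcut reads: for $t>1$, apply monotonicity between $(x,x^*)$ and $\tfrac{1}{t}(u,u^*)\in\gra B$ and multiply by $t^2$ to get $\psi(t)\ge 0$; for $t<0$, pair $(0,0)$ with $\tfrac{-t}{1-t}(x,x^*)+\tfrac{1}{1-t}(u,u^*)$ and multiply by $(1-t)^2$. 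So both proofs rest on the same two ingredients --- convexity of the graph manufactures auxiliary graph points, and degree-two homogeneity rescales the resulting monotonicity inequality --- but the classical argument reaches each scalar $t$ with one application of monotonicity, whereas your homogenization of the binary form $g$ over the $\ell^1$-ball establishes nonnegativity of the entire quadratic form at once and then reads off every scaling simultaneously. Yours is slightly longer but more systematic; the midpoint/reflection trick is shorter but more ad hoc.
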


\begin{fact}[Simons]
\emph{(See \cite[Lemma~19.7 and Section~22]{Si2}.)}
\label{f:referee}
Let $A:X\rightrightarrows X^*$ be a monotone operator such
 that $\operatorname{gra} A$ is convex with $\operatorname{gra} A
\neq\varnothing$.
Then the function
\begin{equation}
g\colon X\times X^* \rightarrow \left]-\infty,+\infty\right]\colon
(x,x^*)\mapsto \langle x, x^*\rangle + \iota_{\operatorname{gra} A}(x,x^*)
\end{equation}
is proper and convex.
\end{fact}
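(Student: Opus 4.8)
The goal is to show that the function
\[
g(x,x^*) = \scal{x}{x^*} + \iota_{\gra A}(x,x^*)
\]
is proper and convex, under the assumption that $A$ is monotone with $\gra A$ convex and nonempty. Properness is immediate: since $\gra A \neq \varnothing$, pick any $(a,a^*)\in\gra A$, and then $g(a,a^*)=\scal{a}{a^*}\in\RR$, so $g$ is not identically $+\infty$ and never takes the value $-\infty$. The substance of the claim is convexity, and this is where I would spend the effort.

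For convexity, the plan is to verify the defining inequality directly on the domain of $g$, which is exactly $\gra A$ (the indicator term forces attention there). Fix $(x,x^*),(y,y^*)\in\gra A$ and $\lambda\in\zeroun$. Since $\gra A$ is convex, the point $(1-\lambda)(x,x^*)+\lambda(y,y^*)$ again lies in $\gra A$, so $\iota_{\gra A}$ behaves affinely (indeed vanishes) along the segment, and the entire burden falls on the bilinear term $\scal{\cdot}{\cdot}$. Thus I must show
\[
\Scal{(1-\lambda)x+\lambda y}{(1-\lambda)x^*+\lambda y^*}
\leq (1-\lambda)\scal{x}{x^*}+\lambda\scal{y}{y^*}.
\]
Expanding the left-hand side using bilinearity of the pairing and collecting terms, the difference between the right- and left-hand sides reduces, after using the identity $(1-\lambda)+\lambda=1$, to a multiple $\lambda(1-\lambda)$ of the cross expression $\scal{x-y}{x^*-y^*}$.

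The key point — and the one place monotonicity enters — is precisely that this cross term is nonnegative: since both $(x,x^*)$ and $(y,y^*)$ lie in $\gra A$ and $A$ is monotone, the monotonicity inequality gives $\scal{x-y}{x^*-y^*}\geq 0$. Because $\lambda(1-\lambda)\geq 0$ for $\lambda\in\zeroun$, the desired inequality follows, establishing midpoint/segment convexity of $g$ on $\gra A$; off $\gra A$ the inequality holds trivially because the right-hand side is $+\infty$ whenever either endpoint lies outside $\gra A$. I expect the main (modest) obstacle to be the bookkeeping in the algebraic expansion: one must be careful that the pairing is bilinear but not symmetric in the two slots $X$ and $X^*$, so the cross terms $\scal{x}{y^*}$ and $\scal{y}{x^*}$ need not coincide, and the correct grouping that produces the clean factor $\lambda(1-\lambda)\scal{x-y}{x^*-y^*}$ relies on the cancellation $\scal{x}{y^*}+\scal{y}{x^*}$ appearing symmetrically. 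Once this is organized correctly, convexity is exactly a restatement of monotonicity, which is the conceptual content of the fact.
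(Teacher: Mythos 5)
Your proof is correct: the paper itself gives no argument for this fact (it simply cites Simons, Lemma~19.7 of \cite{Si2}), and your computation is the standard one found there. Expanding the pairing bilinearly and grouping terms does yield exactly $\lambda(1-\lambda)\scal{x-y}{x^*-y^*}\geq 0$ as the surplus, so convexity of $g$ on the convex set $\gra A$ is precisely a restatement of monotonicity, and the off-graph cases are trivial since the right-hand side is $+\infty$.
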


\begin{fact}
\emph{(See \cite[Theorem~3.4 and Corollary~5.6]{Voi1}, or \cite[Theorem~24.1(b)]{Si2}.)}
\label{f:referee1}
Let $A, B:X\To X^*$ be maximally monotone operators. Assume that
$\bigcup_{\lambda>0} \lambda\left[P_X(\dom F_A)-P_X(\dom F_B)\right]$
is a closed subspace.
If
\begin{equation}
F_{A+B}\geq\langle \cdot,\,\cdot\rangle\;\text{on \; $X\times X^*$},
\end{equation}
then $A+B$ is maximally monotone.
\end{fact}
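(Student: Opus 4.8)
The plan is to show that $A+B$, which is plainly monotone, has no proper monotone extension: I fix a pair $(z,z^*)$ that is monotonically related to $\gra(A+B)$ and prove $(z,z^*)\in\gra(A+B)$. (The hypothesis already forces $\gra(A+B)\neq\varnothing$, since otherwise $F_{A+B}\equiv-\infty$ and the assumption $F_{A+B}\geq\pscal$ would fail.) The organizing object is the partial inf-convolution $G:=F_A\Box_2 F_B$, that is, $G(x,x^*)=\inf_{u^*\in X^*}\big[F_A(x,x^*-u^*)+F_B(x,u^*)\big]$. Three facts are immediate and require no hypotheses. First, $G\geq\pscal$ on \ZZZ, since $F_A(x,x^*-u^*)\geq\scal{x}{x^*-u^*}$ and $F_B(x,u^*)\geq\scal{x}{u^*}$ by Fact~\ref{FFc}, and the two lower bounds sum to $\scal{x}{x^*}$. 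Second, writing a generic element of $\gra(A+B)$ as $(a,a^*+b^*)$ with $a^*\in Aa$ and $b^*\in Ba$ and splitting the corresponding supremand of $F_{A+B}$ along the decomposition $x^*=(x^*-u^*)+u^*$ (both pieces pairing with the \emph{same} point $a$) gives $F_{A+B}\leq G$. Third, because $(z,z^*)$ is monotonically related, $F_{A+B}(z,z^*)-\scal{z}{z^*}=-\inf_{(a,a^*)\in\gra(A+B)}\scal{z-a}{z^*-a^*}\leq 0$; combined with the standing hypothesis $F_{A+B}\geq\pscal$ this yields $F_{A+B}(z,z^*)=\scal{z}{z^*}$.

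It remains to prove the decisive equality $G(z,z^*)=\scal{z}{z^*}$, and this is where both the closed-subspace condition and the hypothesis are indispensable. From the first paragraph I already have $\scal{z}{z^*}=F_{A+B}(z,z^*)\leq G(z,z^*)$, so the whole difficulty is the reverse inequality $G(z,z^*)\leq\scal{z}{z^*}$, i.e. collapsing the partial inf-convolution back onto $F_{A+B}$ at this point. I would obtain it by a Fenchel-duality argument in the coupled first variable: the requirement that $F_A$ and $F_B$ be evaluated at a common $x$ turns the computation of $G(z,z^*)$ into the conjugate of a sum, to which Attouch--Br\'ezis (Fact~\ref{AttBre:1}) applies, its qualification ``$\bigcup_{\lambda>0}\lambda[\dom f-\dom g]$ is a closed subspace'' being met precisely because $\bigcup_{\lambda>0}\lambda\big[P_X(\dom F_A)-P_X(\dom F_B)\big]$ is assumed to be a closed subspace. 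This should simultaneously supply the reverse inequality—here the hypothesis $F_{A+B}\geq\pscal$ is what I expect to pin the dual optimal value at $\scal{z}{z^*}$ rather than permitting a strictly larger value—and \emph{exactness}: a multiplier $u_0^*\in X^*$ with $F_A(z,z^*-u_0^*)+F_B(z,u_0^*)=G(z,z^*)=\scal{z}{z^*}$. I expect this to be the main obstacle. The interchange of the decoupled suprema defining $G$ with the coupled supremum defining $F_{A+B}$ is a minimax step that is false without a constraint qualification; moreover, in a nonreflexive $X$ the relevant conjugates live a priori in $X^{*}\times X^{**}$, so genuine care is needed to keep the attaining multiplier in $X^*$—both difficulties being exactly what the closed-subspace hypothesis is designed to overcome.

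The conclusion is then immediate. In the exact splitting $F_A(z,z^*-u_0^*)+F_B(z,u_0^*)=\scal{z}{z^*}$ each summand dominates its pairing, $F_A(z,z^*-u_0^*)\geq\scal{z}{z^*-u_0^*}$ and $F_B(z,u_0^*)\geq\scal{z}{u_0^*}$, while the two pairings sum to $\scal{z}{z^*}$; hence both inequalities are in fact equalities. Applying Fact~\ref{FFc} to the maximally monotone operators $A$ and $B$ separately converts these equalities into graph membership: $(z,z^*-u_0^*)\in\gra A$ and $(z,u_0^*)\in\gra B$. Therefore $z^*-u_0^*\in Az$ and $u_0^*\in Bz$, so that $z^*=(z^*-u_0^*)+u_0^*\in Az+Bz=(A+B)(z)$, i.e. $(z,z^*)\in\gra(A+B)$. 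Since $(z,z^*)$ was an arbitrary pair monotonically related to $\gra(A+B)$, the operator $A+B$ admits no proper monotone extension and is maximally monotone.
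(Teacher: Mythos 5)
First, note that the paper itself gives no proof of this statement: it is quoted as a Fact from Voisei \cite{Voi1} and Simons \cite[Theorem~24.1(b)]{Si2}, so your attempt can only be judged on its own merits, and on those merits it has a genuine gap at exactly the point you yourself flag. The scaffolding is fine: $G:=F_A\Box_2 F_B\geq\pscal$ is immediate, $F_{A+B}\leq G$ is correct (it is the paper's Fact~\ref{f:referee03}), the identity $F_{A+B}(z,z^*)-\scal{z}{z^*}=-\inf_{(a,a^*)\in\gra(A+B)}\scal{z-a}{z^*-a^*}\leq 0$ at a monotonically related point is correct, and the endgame --- from an exact splitting $F_A(z,z^*-u_0^*)+F_B(z,u_0^*)=\scal{z}{z^*}$ with $u_0^*\in X^*$ to $(z,z^*)\in\gra(A+B)$ via Fact~\ref{FFc} --- is also correct. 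But the decisive step, $G(z,z^*)\leq\scal{z}{z^*}$ with an attaining multiplier in $X^*$, is never proved; it is only announced as something a ``Fenchel-duality argument in the coupled first variable'' should deliver, and the mechanism you name does not set up as described. Attouch--Br\'ezis under the assumed constraint qualification (a condition on the \emph{first} variable, exactly as in the paper's proof of Theorem~\ref{FS6}) computes the \emph{conjugate}: it yields $G^*(y^*,y^{**})=\min_{s^*\in X^*}\left[F_A^*(s^*,y^{**})+F_B^*(y^*-s^*,y^{**})\right]$ on $X^*\times X^{**}$. This gives no upper bound on the primal value $G(z,z^*)$, and recovering one by biconjugation sends you to $X^{**}\times X^{***}$ in a nonreflexive space. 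If instead you apply Fenchel duality directly to the infimum over $u^*$ defining $G(z,z^*)$, the relevant qualification concerns the second-variable domains of $F_A(z,\cdot)$ and $F_B(z,\cdot)$, which is not what is assumed.

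There is also a conceptual problem with how you expect the hypothesis $F_{A+B}\geq\pscal$ to enter. In your sketch it is consumed entirely at the single point $(z,z^*)$, to get $F_{A+B}(z,z^*)=\scal{z}{z^*}$; but since in general $F_{A+B}$ is strictly below $F_A\Box_2 F_B$, equality of $F_{A+B}$ with the pairing at one point cannot by itself force $G(z,z^*)=\scal{z}{z^*}$. The pointwise collapse you need is precisely a localized version of the identity $F_{A+B}=F_A\Box_2 F_B$, which the paper establishes only under substantially stronger structural hypotheses (Theorems~\ref{FS6} and \ref{tf:main}: linear relations with $\dom A-\dom B$ closed, or linear relation plus normal cone with an interiority condition); under the bare hypotheses of Fact~\ref{f:referee1} it is not available, and any completion must use $F_{A+B}\geq\pscal$ globally, on the conjugate side, with careful bidual bookkeeping --- this is what the cited proofs of Voisei and Simons actually do, and it is nontrivial. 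Beware also of a tempting shortcut that fails: from $G\geq\pscal$ one gets $G^*\leq(\pscal)^*$, but the Fenchel conjugate of the duality product on $X\times X^*$ is identically $+\infty$, so this inequality carries no information. In short, you have correctly reduced the theorem to its hard core and correctly identified the two obstacles (the minimax interchange and keeping the multiplier in $X^*$), but identifying the obstacle is not overcoming it: as written, the proposal is incomplete at the mathematical heart of the statement.
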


\begin{definition}[Fitzpatrick family]
Let  $A\colon X \To X^*$ be maximally monotone.
The associated \emph{Fitzpatrick family}
$\mathcal{F}_A$ consists of all functions $F\colon
X\times X^*\to\RX$ that are lower semicontinuous and convex,
and that satisfy
$F\geq \scal{\cdot}{\cdot} $, and $F=\scal{\cdot}{\cdot}$ on $\gra A$.
\end{definition}

\begin{fact}[Fitzpatrick]\emph{(See \cite[Theorem~3.10]{Fitz88}
 or \cite{BurSva:1}.)}\label{GRF:2}
Let  $A\colon X \To X^*$ be maximally monotone.
Then for every $(x,x^*)\in X\times X^*$,
\begin{equation*}
F_A(x,x^*) = \min\menge{F(x,x^*)}{F\in \mathcal{F}_A}.
\end{equation*}
\end{fact}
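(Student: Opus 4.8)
The plan is to establish two facts: that $F_A$ itself belongs to the Fitzpatrick family $\mathcal{F}_A$, and that $F_A$ minorizes every member of $\mathcal{F}_A$ pointwise. Together these show that for each $(x,x^*)$ the pointwise infimum over $\mathcal{F}_A$ is attained at $F_A$ and equals $F_A(x,x^*)$, which is exactly the claimed identity. Since $A$ is maximally monotone, $\gra A\neq\varnothing$, so $F_A$ is well defined to begin with.

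First I would verify $F_A\in\mathcal{F}_A$. Writing $F_A(x,x^*)=\sup_{(a,a^*)\in\gra A}\big(\scal{x}{a^*}+\scal{a}{x^*}-\scal{a}{a^*}\big)$ exhibits $F_A$ as a pointwise supremum of the affine continuous functions $(x,x^*)\mapsto \scal{x}{a^*}+\scal{a}{x^*}-\scal{a}{a^*}$ indexed by $(a,a^*)\in\gra A$; a supremum of convex lower semicontinuous functions is convex and lower semicontinuous, so $F_A$ has the required regularity. The two remaining defining properties, namely $F_A\geq\scal{\cdot}{\cdot}$ on $X\times X^*$ and $F_A=\scal{\cdot}{\cdot}$ on $\gra A$, are precisely the content of Fact~\ref{FFc} (equivalently \eqref{FFa} and \eqref{FFb}). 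Hence $F_A\in\mathcal{F}_A$, and in particular $\mathcal{F}_A\neq\varnothing$.

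The hard part, and the heart of the argument, is to prove $F_A(x,x^*)\leq F(x,x^*)$ for an arbitrary $F\in\mathcal{F}_A$ and an arbitrary $(x,x^*)\in X\times X^*$. If $F(x,x^*)=+\infty$ there is nothing to prove, so assume it is finite; note $F$ is proper because $F=\scal{\cdot}{\cdot}$ is finite on the nonempty set $\gra A$ while $F\geq\scal{\cdot}{\cdot}$ excludes the value $-\infty$. Fix $(a,a^*)\in\gra A$ and, for $t\in\zo$, set $(x_t,x_t^*):=(1-t)(a,a^*)+t(x,x^*)$. Convexity of $F$ together with $F(a,a^*)=\scal{a}{a^*}$ gives $F(x_t,x_t^*)\leq(1-t)\scal{a}{a^*}+tF(x,x^*)$, while $F\geq\scal{\cdot}{\cdot}$ gives $\scal{x_t}{x_t^*}\leq F(x_t,x_t^*)$. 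Expanding $\scal{x_t}{x_t^*}$ bilinearly, combining the two estimates, dividing by $t>0$ and letting $t\downarrow0$ (the only term that vanishes is $t\scal{x-a}{x^*-a^*}$) yields $\scal{x}{a^*}+\scal{a}{x^*}-\scal{a}{a^*}\leq F(x,x^*)$. Taking the supremum over $(a,a^*)\in\gra A$ gives $F_A(x,x^*)\leq F(x,x^*)$, as required.

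Combining the two steps, $F_A\in\mathcal{F}_A$ and $F_A\leq F$ for every $F\in\mathcal{F}_A$, so for each $(x,x^*)$ the value $\inf\menge{F(x,x^*)}{F\in\mathcal{F}_A}$ is attained at $F=F_A$ and equals $F_A(x,x^*)$, which is the asserted formula. I expect the only delicate points to be bookkeeping: ensuring $F$ is proper so that the convexity inequality is legitimate, and justifying the passage $t\downarrow0$ after division by $t$. It is worth noting that maximality of $A$ is not used in the minorization step; it enters only through Fact~\ref{FFc} in the first step, where it is what guarantees the global inequality $F_A\geq\scal{\cdot}{\cdot}$.
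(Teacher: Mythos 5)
Your proof is correct. Note that the paper itself offers no proof of this statement: it is quoted as a known result, with the reader referred to Fitzpatrick \cite[Theorem~3.10]{Fitz88} and to \cite{BurSva:1}, so there is no internal argument to compare against. What you have reconstructed is essentially the classical argument behind the cited theorem. Both halves check out: the membership $F_A\in\mathcal{F}_A$ follows, exactly as you say, from $F_A$ being a supremum of continuous affine functions (convexity and lower semicontinuity) together with Fact~\ref{FFc} (the inequality $F_A\geq\scal{\cdot}{\cdot}$ everywhere and equality precisely on $\gra A$); and your minorization step is sound, since $F$ is proper (it equals the finite pairing on the nonempty set $\gra A$ and never takes $-\infty$), the convexity estimate along the segment $(1-t)(a,a^*)+t(x,x^*)$ is legitimate, and after dividing by $t$ the only surviving error term $t\scal{x-a}{x^*-a^*}$ indeed vanishes as $t\downarrow 0$, yielding $\scal{x}{a^*}+\scal{a}{x^*}-\scal{a}{a^*}\leq F(x,x^*)$ and hence $F_A\leq F$ upon taking the supremum over $\gra A$. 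Your closing observation is also accurate and worth keeping: maximality of $A$ enters only through Fact~\ref{FFc} in establishing $F_A\geq\scal{\cdot}{\cdot}$ (so that $F_A$ itself belongs to the family), while the minorization $F_A\leq F$ holds for any monotone $A$ with nonempty graph. A mild stylistic alternative, equivalent to your computation, is to observe that your limiting inequality says precisely that $(a^*,a)\in\partial F(a,a^*)$ for each $(a,a^*)\in\gra A$, from which the subgradient inequality gives the same bound; but this is the same argument in different clothing.
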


\begin{corollary}\label{GRF:3}
Let  $A\colon X \To X^*$ be a maximally monotone operator such
 that $\operatorname{gra} A$ is convex.
Then for every $(x,x^*)\in X\times X^*$,
\begin{equation*}
F_A(x,x^*) = \min\menge{F(x,x^*)}{F\in \mathcal{F}_A}\quad\text{and}\quad
g(x,x^*)= \max\menge{F(x,x^*)}{F\in \mathcal{F}_A},
\end{equation*}
where $g:= \langle \cdot, \cdot\rangle + \iota_{\operatorname{gra} A}$.
\end{corollary}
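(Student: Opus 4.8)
The plan is to handle the two equalities separately, observing that the convexity of $\gra A$ is needed only for the second. The first equality, $F_A(x,x^*) = \min\{F(x,x^*)\mid F\in\mathcal{F}_A\}$, is exactly Fact~\ref{GRF:2}, which is valid for every maximally monotone operator; I would simply cite it and move on.

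For the second equality the strategy is to identify $g := \langle\cdot,\cdot\rangle + \iota_{\gra A}$ as the largest element of the Fitzpatrick family. The first task is to check that $g\in\mathcal{F}_A$. Propriety and convexity are supplied directly by Fact~\ref{f:referee}, and this is the one place where the hypothesis that $\gra A$ is convex is used. Lower semicontinuity follows once I note that $\gra A$ is norm closed: if $(x_n,x_n^*)\to(x,x^*)$ with $(x_n,x_n^*)\in\gra A$, then passing to the limit in the monotonicity inequality against an arbitrary $(y,y^*)\in\gra A$ and invoking maximality of $A$ forces $(x,x^*)\in\gra A$. Hence $\iota_{\gra A}$ is lsc, and $g$ is the sum of the (jointly norm-continuous) pairing and an lsc function, so $g$ is lsc. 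The remaining defining conditions are immediate: on $\gra A$ we have $g=\langle\cdot,\cdot\rangle$, while off $\gra A$ we have $g=+\infty\geq\langle\cdot,\cdot\rangle$, so $g\geq\langle\cdot,\cdot\rangle$ everywhere with equality on the graph. Thus $g\in\mathcal{F}_A$.

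Next I would show that $g$ dominates every member of the family. Fix $F\in\mathcal{F}_A$ and a point $(x,x^*)$. If $(x,x^*)\in\gra A$, the defining property of $\mathcal{F}_A$ gives $F(x,x^*)=\langle x,x^*\rangle=g(x,x^*)$; if $(x,x^*)\notin\gra A$, then trivially $F(x,x^*)\leq+\infty=g(x,x^*)$. Hence $F\leq g$ pointwise. Since $g$ itself belongs to $\mathcal{F}_A$, the supremum over the family is attained at $g$ at every point, which yields $g=\max\{F\mid F\in\mathcal{F}_A\}$.

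The only step requiring any care is verifying that $g$ qualifies as a member of $\mathcal{F}_A$---namely its convexity and lower semicontinuity---but both ingredients are already at hand (Fact~\ref{f:referee} for convexity and propriety, and closedness of $\gra A$ from maximal monotonicity). Once $g\in\mathcal{F}_A$ is secured, the domination argument is purely formal, so I anticipate no genuine obstacle beyond correctly assembling these pieces.
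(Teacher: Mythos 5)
Your proposal is correct and takes essentially the same approach as the paper, whose entire proof reads ``Apply Fact~\ref{f:referee} and Fact~\ref{GRF:2}'': you use Fact~\ref{GRF:2} for the minimum and Fact~\ref{f:referee} to establish that $g$ belongs to $\mathcal{F}_A$ and dominates it, exactly as intended. Your extra verifications (norm-closedness of $\gra A$ giving lower semicontinuity of $g$, and the pointwise domination $F\leq g$) are just the details the paper leaves implicit, and they are carried out correctly.
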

\begin{proof}
Apply Fact~\ref{f:referee} and Fact~\ref{GRF:2}.
\end{proof}

\begin{fact}
\emph{(See \cite[Lemma~23.9]{Si2}, or \cite[Proposition~4.2]{BM}.)}
\label{f:referee03}
Let $A, B\colon X\To X^*$ be   monotone operators and $\dom A\cap\dom B\neq\varnothing$.
Then $F_{A+B}\leq F_A\Box_2 F_B$.
\end{fact}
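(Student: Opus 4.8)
The plan is to unfold both sides directly from their definitions and exhibit a termwise inequality. Recall that a generic element of $\gra(A+B)$ has the form $(a,a^*+b^*)$ where $a^*\in Aa$ and $b^*\in Ba$ share the same base point $a\in\dom A\cap\dom B$ (this set is nonempty by hypothesis, so $\gra(A+B)\neq\varnothing$ and $F_{A+B}$ is well defined), and that
$$(F_A\Box_2 F_B)(x,x^*)=\inf_{v\in X^*}\big[F_A(x,x^*-v)+F_B(x,v)\big].$$
Hence it suffices to fix an arbitrary $v\in X^*$ and prove $F_{A+B}(x,x^*)\le F_A(x,x^*-v)+F_B(x,v)$; taking the infimum over $v$ then yields the claim.

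To establish this, I would fix $v$ and estimate the supremum defining $F_{A+B}(x,x^*)$ term by term. For a fixed decomposition with $(a,a^*)\in\gra A$ and $(a,b^*)\in\gra B$, the single term
$$\scal{x}{a^*+b^*}+\scal{a}{x^*}-\scal{a}{a^*+b^*}$$
appearing in the supremum splits, after inserting and cancelling the cross term $\scal{a}{v}$, as
$$\big[\scal{x}{a^*}+\scal{a}{x^*-v}-\scal{a}{a^*}\big]+\big[\scal{x}{b^*}+\scal{a}{v}-\scal{a}{b^*}\big].$$
The first bracket is one of the terms in the supremum defining $F_A(x,x^*-v)$, hence is $\le F_A(x,x^*-v)$; the second bracket (using that the base point is the same $a$) is one of the terms in the supremum defining $F_B(x,v)$, hence is $\le F_B(x,v)$. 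Adding gives the desired bound for this term, and taking the supremum over all $(a,a^*+b^*)\in\gra(A+B)$ gives $F_{A+B}(x,x^*)\le F_A(x,x^*-v)+F_B(x,v)$.

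The argument is essentially a bookkeeping identity, so I do not expect a serious obstacle; the one point that needs care is that the two summands $a^*$ and $b^*$ must be evaluated at the \emph{same} base point $a$, which is exactly what the sum operator forces and what makes the cross term $\scal{a}{v}$ cancel. Since the decomposition $c^*=a^*+b^*$ of a point of $\gra(A+B)$ need not be unique, it is worth noting that the estimate holds for \emph{every} admissible decomposition, which is all that is required because we are bounding from above under a supremum.
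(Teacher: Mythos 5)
Your argument is correct and complete: the termwise splitting of $\scal{x}{a^*+b^*}+\scal{a}{x^*}-\scal{a}{a^*+b^*}$ after inserting the cross term $\scal{a}{v}$, followed by taking the supremum over $\gra(A+B)$ and then the infimum over $v$, is exactly the standard proof of this inequality, and your attention to the shared base point $a$ and to non-uniqueness of the decomposition is the right care to take. Note that the paper itself gives no proof of this statement --- it is quoted as a Fact with citations to Simons' book and to Bauschke--McLaren--Sendov --- and your argument coincides with the one found in those references.
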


Let $X,Y$ be two real Banach spaces and let $h:X\times Y
\rightarrow\RX$ be a convex function. We say that $h$ is {\em
  separable} if there exist convex functions $h_1:X \rightarrow\RX$
and $h_2:Y \rightarrow\RX$ such that $h(x,y)= h_1(x)+h_2(y)$. This
situation is denoted as $h=h_1\oplus h_2$. We recall below some cases
in which the Fitzpatrick function is separable.

\begin{fact}
\emph{(See \cite[Corollary~5.9]{BBBRW} or \cite[Fact~4.1]{BBWY4}.)}
\label{f:referee04}
Let $C$ be a nonempty closed convex subset of $X$.
Then $F_{N_C}=\iota_C\oplus \iota^*_C$.
\end{fact}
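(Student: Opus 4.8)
The plan is to establish the two inequalities $F_{N_C}\le\iota_C\oplus\iota_C^*$ and $F_{N_C}\ge\iota_C\oplus\iota_C^*$ separately, recalling throughout that $\iota_C^*=\sigma_C$. The starting point is the identity $N_C=\partial\iota_C$: by the Fenchel--Young inequality, a pair $(a,a^*)$ lies in $\gra N_C$ precisely when $\iota_C(a)+\iota_C^*(a^*)=\scal{a}{a^*}$, i.e. when $a\in C$ and $\scal{a}{a^*}=\sigma_C(a^*)$. I would use this repeatedly to rewrite the terms appearing in the definition of $F_{N_C}$.

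For the upper bound, fix $(x,x^*)$ and take any $(a,a^*)\in\gra N_C$. Using $\iota_C(a)=0$ and $\scal{a}{a^*}=\iota_C^*(a^*)$, the generic term in the supremum defining $F_{N_C}$ splits as
\[
\scal{x}{a^*}+\scal{a}{x^*}-\scal{a}{a^*}=\big(\scal{x}{a^*}-\iota_C^*(a^*)\big)+\scal{a}{x^*}.
\]
Two applications of Fenchel--Young, namely $\scal{x}{a^*}-\iota_C^*(a^*)\le\iota_C(x)$ and $\scal{a}{x^*}\le\iota_C(a)+\iota_C^*(x^*)=\iota_C^*(x^*)$, bound this by $\iota_C(x)+\iota_C^*(x^*)$. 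Taking the supremum over $\gra N_C$ yields $F_{N_C}(x,x^*)\le\iota_C(x)+\iota_C^*(x^*)$.

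For the lower bound I would distinguish two cases. If $x\in C$, then $0\in N_C(a)$ for every $a\in C$, so restricting the supremum to the admissible pairs $(a,0)$ gives $F_{N_C}(x,x^*)\ge\sup_{a\in C}\scal{a}{x^*}=\sigma_C(x^*)=\iota_C(x)+\iota_C^*(x^*)$, since $\iota_C(x)=0$. The delicate case, and what I expect to be the main obstacle, is $x\notin C$, where the right-hand side is $+\infty$ and one must produce admissible pairs driving the Fitzpatrick term to $+\infty$; a naive separation argument would only yield an \emph{approximate} normal direction, which is useless for the exact Fitzpatrick function. The clean device is maximal monotonicity: since $\iota_C$ is proper, lower semicontinuous and convex, $N_C=\partial\iota_C$ is maximally monotone by Fact~\ref{SubMR}. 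Because $x\notin C$ forces $N_C(x)=\varnothing$, we have $(x,0)\notin\gra N_C$; by maximality $(x,0)$ cannot be monotonically related to $\gra N_C$ (else $\gra N_C\cup\{(x,0)\}$ would be a proper monotone extension), so there is $(a_0,a_0^*)\in\gra N_C$ with $\scal{x-a_0}{0-a_0^*}<0$, i.e. $\scal{x-a_0}{a_0^*}>0$. Since $N_C(a_0)$ is a cone, $(a_0,t\,a_0^*)\in\gra N_C$ for every $t\ge0$, and evaluating the Fitzpatrick expression at these pairs gives $F_{N_C}(x,x^*)\ge t\scal{x-a_0}{a_0^*}+\scal{a_0}{x^*}\to+\infty$ as $t\to+\infty$. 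Thus $F_{N_C}(x,x^*)=+\infty=\iota_C(x)+\iota_C^*(x^*)$, the latter being $+\infty$ because $\iota_C^*(x^*)=\sigma_C(x^*)>-\infty$ as $C\neq\varnothing$. Combining the two bounds gives $F_{N_C}=\iota_C\oplus\iota_C^*$.
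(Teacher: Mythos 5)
Your proof is correct. Note, however, that the paper does not prove Fact~\ref{f:referee04} at all: it imports the identity $F_{N_C}=\iota_C\oplus\iota_C^*$ by citation (to \cite{BBBRW} and \cite{BBWY4}), so there is no internal argument to compare yours against; what you supply is a self-contained verification built only from tools already present in the paper. Your upper bound $F_{N_C}\le\iota_C\oplus\iota_C^*$ is the standard ``representer'' inequality for subdifferentials, obtained from Fenchel--Young exactly as you write it (the subtraction of $\iota_C^*(a^*)$ is legitimate because $\iota_C^*(a^*)=\scal{a}{a^*}\in\RR$ whenever $(a,a^*)\in\gra N_C$); the lower bound for $x\in C$ follows from the pairs $(a,0)$; and the genuinely delicate case $x\notin C$ is handled by combining maximal monotonicity of $N_C=\partial\iota_C$ (Fact~\ref{SubMR}) with the cone structure of $N_C(a_0)$ to drive the supremum to $+\infty$. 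You correctly identify that this last step is the crux: a separation argument alone produces $z^*$ with $\scal{x}{z^*}>\sigma_C(z^*)$, but in a general Banach space $\sigma_C(z^*)$ need not be attained, so $z^*$ need not be normal to $C$ at any point; converting non-membership of $(x,0)$ into a strict violation of monotonicity at an actual graph point via maximality is exactly the right device, and your final bookkeeping ($\sigma_C(x^*)>-\infty$ since $C\neq\varnothing$, so the right-hand side is unambiguously $+\infty$) is also sound.
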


\begin{fact}
\emph{(See \cite[Theorem~5.3]{BBBRW}.)}
\label{f:sub05}
Let  $f:X\rightarrow\RX$
 be a proper lower semicontinuous sublinear function.
Then $F_{\partial f}=f\oplus f^*$ and $\mathcal{F}_A=\big\{
f\oplus f^*\}$.
\end{fact}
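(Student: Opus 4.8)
The plan is to prove the two assertions separately, after first exploiting sublinearity to pin down all the objects involved. Since $f$ is proper, lower semicontinuous, and sublinear, one has $f(0)=0$, the conjugate $f^*$ is the indicator $\iota_C$ of the nonempty closed convex set $C:=\partial f(0)$, and $f=\sigma_C=f^{**}$. A short computation from the subgradient inequality and positive homogeneity shows that
\[
\gra\partial f=\big\{(a,a^*)\mid a^*\in C,\ \langle a,a^*\rangle=f(a)\big\},
\]
that $\langle a,a^*\rangle=f(a)$ for every $(a,a^*)\in\gra\partial f$, and that $\partial f(\lambda a)=\partial f(a)$ for all $\lambda>0$; in particular $\gra\partial f$ is a cone and $(0,c^*)\in\gra\partial f$ for every $c^*\in C$. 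Finally, being lower semicontinuous and convex, $f\oplus f^*$ satisfies $f\oplus f^*\geq\langle\cdot,\cdot\rangle$ by the Fenchel--Young inequality, with equality exactly on $\gra\partial f$, so $f\oplus f^*\in\mathcal{F}_{\partial f}$.

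For the identity $F_{\partial f}=f\oplus f^*$ I would prove two inequalities. The upper bound $F_{\partial f}\leq f\oplus f^*$ holds for any proper lsc convex $f$: for $(a,a^*)\in\gra\partial f$ the Fenchel--Young equality gives $\langle a,a^*\rangle=f(a)+f^*(a^*)$, so the Fitzpatrick integrand splits as $[\langle x,a^*\rangle-f^*(a^*)]+[\langle a,x^*\rangle-f(a)]\leq f^{**}(x)+f^*(x^*)=f(x)+f^*(x^*)$, and taking the supremum over $\gra\partial f$ gives the claim (alternatively this is immediate from membership and the minimality in Fact~\ref{GRF:2}). The lower bound is where sublinearity enters. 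Testing the supremum only at the points $(0,c^*)$, $c^*\in C$, already yields $F_{\partial f}(x,x^*)\geq\sup_{c^*\in C}\langle x,c^*\rangle=f(x)$ for every $(x,x^*)$, since $f=\sigma_C$.

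It remains to match $f^*(x^*)$. If $x^*\in C$ then $f^*(x^*)=0$ and the bound just obtained gives $F_{\partial f}(x,x^*)\geq f(x)=f(x)+f^*(x^*)$, which together with the upper bound settles this case. If $x^*\notin C$ then $f^*(x^*)=+\infty$ and I must show $F_{\partial f}(x,x^*)=+\infty$. Here I would pick $\bar a\in\dom\partial f$ with $\langle\bar a,x^*\rangle>f(\bar a)$ and $\bar a^*\in\partial f(\bar a)$; since $\partial f(\lambda\bar a)=\partial f(\bar a)$, the pairs $(\lambda\bar a,\bar a^*)$ lie in $\gra\partial f$ for all $\lambda>0$, and the Fitzpatrick integrand evaluated there equals $\langle x,\bar a^*\rangle+\lambda\big(\langle\bar a,x^*\rangle-f(\bar a)\big)\to+\infty$. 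The delicate point, and the step I expect to be the main obstacle, is producing such an $\bar a$ inside $\dom\partial f$: the function $a\mapsto\langle a,x^*\rangle-f(a)$ is only upper semicontinuous and its supremum $+\infty$ is a priori attained only over $\dom f$. I would resolve this with the Br{\o}nsted--Rockafellar theorem \cite{BorVan,Si2}, which supplies, for a chosen $a_0\in\dom f$ with $\langle a_0,x^*\rangle-f(a_0)>0$, a sequence $a_n\in\dom\partial f$ with $a_n\to a_0$ and $f(a_n)\to f(a_0)$, so that $\langle a_n,x^*\rangle-f(a_n)>0$ eventually.

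For the family statement $\mathcal{F}_{\partial f}=\{f\oplus f^*\}$, the plan is to show that $f\oplus f^*$ is \emph{autoconjugate} and to combine this with the extremal structure of the Fitzpatrick family. By Fact~\ref{GRF:2} we already know $f\oplus f^*=F_{\partial f}=\min\mathcal{F}_{\partial f}$, so it suffices to prove it is simultaneously the largest element. Writing $F:=f\oplus f^*$, separability gives $F^*(y^*,y^{**})=f^*(y^*)+f^{**}(y^{**})$ on $X^*\times X^{**}$; restricting the second variable to the canonical image of $X$ and using $f^{**}=f$ there yields $F^*(x^*,x)=f^*(x^*)+f(x)=F(x,x^*)$, i.e. $F$ equals its conjugate composed with the coordinate swap. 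Since the map $G\mapsto(G^{*})^{\intercal}$ (conjugation followed by the swap) is an order-reversing involution carrying $\mathcal{F}_{\partial f}$ into itself and sending the minimal element $F_{\partial f}$ to the maximal one, autoconjugacy of $F_{\partial f}$ forces the minimum and maximum to coincide, whence $\mathcal{F}_{\partial f}$ is the singleton $\{f\oplus f^*\}$. The main technical care here is purely the nonreflexive bookkeeping: the conjugate lives on $X^*\times X^{**}$, so one must track the embedding $X\hookrightarrow X^{**}$ and justify $G^{**}=G$ on $X\times X^*$ when invoking the involution. This also recovers and parallels Fact~\ref{f:referee04}.
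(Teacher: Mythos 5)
The paper does not actually prove this statement; it imports it verbatim as a Fact from \cite[Theorem~5.3]{BBBRW}, so your proposal can only be measured against correctness, not against an internal argument. Your proof of the first identity, $F_{\partial f}=f\oplus f^*$, is correct and complete: the description $\gra \partial f=\menge{(a,a^*)}{a^*\in C,\ \scal{a}{a^*}=f(a)}$ with $C=\partial f(0)$, the Fenchel--Young upper bound, the lower bound $F_{\partial f}(x,x^*)\geq\sigma_C(x)=f(x)$ obtained from the pairs $(0,c^*)$, and the blow-up along the rays $(\lambda\bar a,\bar a^*)$ when $x^*\notin C$ are all sound. In particular you correctly identified the one delicate point: density of $\dom\partial f$ in $\dom f$ alone does not suffice because $a\mapsto\scal{a}{x^*}-f(a)$ is only upper semicontinuous, and the Br{\o}nsted--Rockafellar theorem does repair this, since the function-value convergence $f(a_n)\to f(a_0)$ follows from the subgradient inequality at $a_n$ combined with the norm estimates and lower semicontinuity.

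The second assertion, $\mathcal{F}_{\partial f}=\{f\oplus f^*\}$, contains a genuine gap, and it is precisely the point you set aside as ``nonreflexive bookkeeping''. Write $G^{@}(x,x^*):=G^*(x^*,\widehat{x})$ for conjugation-plus-swap restricted back to $X\times X^*$. Iterating, $G^{@@}$ is the conjugate of $G^{@}$ tested only against points of the form $(y^*,\widehat{y})$ with $y\in X$, not against all of $X^*\times X^{**}$; hence $G^{@@}$ is the \emph{weak$\times$weak$^*$} lower semicontinuous convex hull of $G$, and $G^{@@}=G$ holds if and only if $G$ is weak$\times$weak$^*$ lsc. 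Members of $\mathcal{F}_{\partial f}$ are only required to be norm (equivalently, being convex, weak$\times$weak) lsc, which in a nonreflexive space is strictly weaker; this is exactly the kind of hypothesis the present paper has to impose by hand (cf.\ the weak$\times$weak$^*$ closedness assumption in Theorem~\ref{CElC:01}), so it cannot be waved through. Without the involution, your chain of inequalities only yields $F_{\partial f}\leq G^{@}\leq F_{\partial f}^{@}=F_{\partial f}$, i.e.\ $G^{@}=F_{\partial f}$ for \emph{every} $G\in\mathcal{F}_{\partial f}$ --- which places no upper bound on $G$ itself, so the argument stalls exactly where the theorem's content lies. The correct completion (essentially the route of \cite{BBBRW}) avoids the involution: every $G\in\mathcal{F}_{\partial f}$ satisfies $G\leq g:=\scal{\cdot}{\cdot}+\iota_{\gra\partial f}$ pointwise, hence $G\leq \clconv g$, the norm-closed convex hull, which by Fenchel--Moreau in the dual pair $\big(X\times X^*,\,X^*\times X^{**}\big)$ equals $g^{**}$ restricted to $X\times X^*$. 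It therefore suffices to prove
\begin{align*}
g^*(y^*,y^{**})\;\geq\; f^*(y^*)+f^{**}(y^{**})\qquad\text{on all of }X^*\times X^{**},
\end{align*}
which is your part-one computation redone with bidual arguments: the pairs $(0,c^*)$, $c^*\in C$, give $g^*(y^*,y^{**})\geq\sigma_C(y^{**})=f^{**}(y^{**})$, and the Br{\o}nsted--Rockafellar ray argument gives $g^*(y^*,y^{**})=+\infty$ when $y^*\notin C$. Conjugating back then yields $G\leq\clconv g\leq f\oplus f^*$, and with your first half this closes the proof.
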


\begin{remark}\label{r:sub05}
Let $f$ be as in Fact~\ref{f:sub05}, then
\begin{align}\gra (\partial f)_{\Elag}&=\big\{(x,x^*)\in X\times X^*\mid
f(x)+f^*(x^*)\leq \langle x,x^*\rangle+\varepsilon\big\}\nonumber\\
&=\gra \partial_{\varepsilon}f,\quad \forall \varepsilon\geq0.
\label{Enl:Sub1}
\end{align}
\end{remark}

\begin{fact}[Svaiter]\emph{(See \cite[page~312]{SV}.)}\label{CElL:1}
Let  $A\colon X \To X^*$ be maximally monotone. Then
 $A$ is non-enlargeable if and only if $\gra A=\dom F_A$ and then $\gra A$ is convex.
\end{fact}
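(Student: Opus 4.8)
The plan is to prove the biconditional by translating both the non-enlargeability condition and the equality $\gra A=\dom F_A$ into statements about the two inequalities in Fact~\ref{FFc}, namely \eqref{FFa} and \eqref{FFb}. The crucial observation is the reformulation \eqref{Enl:1} of the enlargement in terms of the Fitzpatrick function: for each $\varepsilon\geq0$,
\[
\gra A_{\varepsilon}=\big\{(x,x^*)\in X\times X^*\mid F_A(x,x^*)\leq\scal{x}{x^*}+\varepsilon\big\}.
\]
I would first record the standing fact that $F_A$ is convex, since it is a supremum of functions affine in $(x,x^*)$; consequently $\dom F_A$ is a convex set. This immediately yields the final clause ``and then $\gra A$ is convex'' as soon as the equality $\gra A=\dom F_A$ is established, so the remaining work is purely the equivalence.

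For the forward implication, assume $A$ is non-enlargeable, so $\gra A=\gra A_{\varepsilon}$ for every $\varepsilon\geq0$. The inclusion $\gra A\subseteq\dom F_A$ is free from \eqref{FFb}, which gives $F_A=\scal{\cdot}{\cdot}<+\infty$ on $\gra A$. For the reverse inclusion $\dom F_A\subseteq\gra A$, I would take $(x,x^*)\in\dom F_A$ and set $\varepsilon:=F_A(x,x^*)-\scal{x}{x^*}$; by \eqref{FFa} this $\varepsilon$ is nonnegative and finite, and by construction $F_A(x,x^*)=\scal{x}{x^*}+\varepsilon$, so $(x,x^*)\in\gra A_{\varepsilon}=\gra A$. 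This choice of $\varepsilon$ adapted to the point is the one genuinely clever step; everything else is bookkeeping.

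For the converse, assume $\gra A=\dom F_A$. The inclusion $\gra A\subseteq\gra A_{\varepsilon}$ holds for every $\varepsilon\geq0$ because \eqref{FFb} gives $F_A(x,x^*)=\scal{x}{x^*}\leq\scal{x}{x^*}+\varepsilon$ on $\gra A$. Conversely, if $(x,x^*)\in\gra A_{\varepsilon}$ then $F_A(x,x^*)\leq\scal{x}{x^*}+\varepsilon<+\infty$, hence $(x,x^*)\in\dom F_A=\gra A$; thus $\gra A_{\varepsilon}\subseteq\gra A$, giving equality for all $\varepsilon\geq0$ and hence non-enlargeability. I do not anticipate a serious obstacle: the only point requiring care is ensuring that the $\varepsilon$ manufactured in the forward direction lands in $[0,+\infty[$, which is exactly guaranteed by the lower bound \eqref{FFa} from Fact~\ref{FFc}.
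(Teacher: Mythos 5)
Your proof is correct. Note, however, that the paper itself gives no proof of this statement: it is quoted as a Fact from Svaiter \cite{SV}, so there is no internal argument to compare against. Your derivation — translating both non-enlargeability and the equality $\gra A=\dom F_A$ through \eqref{Enl:1} and Fact~\ref{FFc}, with the point-adapted choice $\varepsilon:=F_A(x,x^*)-\scal{x}{x^*}$ in the forward direction, and deducing convexity of $\gra A$ from convexity of $\dom F_A$ (since $F_A$ is a supremum of affine functions) — is complete, elementary, and self-contained, and is essentially the standard argument underlying the cited reference.
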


It is immediate from the definitions that:

\begin{fact}\label{nEBR} Every non-enlargeable maximally monotone operator is of type (BR).
\end{fact}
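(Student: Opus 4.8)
The plan is to verify the Brønsted–Rockafellar condition directly, with the witness pair $(b,b^*)$ taken to be $(x,x^*)$ itself. Suppose $A$ is non-enlargeable, fix $(x,x^*)\in X\times X^*$ and $\alpha,\beta>0$, and assume
\[
\inf_{(a,a^*)\in\gra A}\scal{x-a}{x^*-a^*}>-\alpha\beta.
\]
Write $m$ for this infimum, so that the hypothesis reads $m>-\alpha\beta$.

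First I would manufacture a suitable enlargement parameter from $m$. Since $\alpha\beta>0$, setting $\varepsilon:=\max\{0,-m\}$ produces a number with $\varepsilon\geq0$ and, because $m>-\alpha\beta$, also $\varepsilon<\alpha\beta$; by construction $m\geq-\varepsilon$. Consequently $\scal{x-a}{x^*-a^*}\geq m\geq-\varepsilon$ for every $(a,a^*)\in\gra A$, which is precisely the defining inequality in \eqref{Enl:1} for membership in $\gra A_{\varepsilon}$. Hence $(x,x^*)\in\gra A_{\varepsilon}$.

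The key step then invokes non-enlargeability: since $\gra A=\gra A_{\varepsilon}$ for every $\varepsilon\geq0$, we conclude $(x,x^*)\in\gra A$. Taking $(b,b^*):=(x,x^*)\in\gra A$ gives $\|x-b\|=0<\alpha$ and $\|x^*-b^*\|=0<\beta$, so the (BR) condition holds and $A$ is of type (BR).

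I expect no genuine obstacle here, in line with the assertion that the statement is immediate: the argument is essentially a one-line translation of the infimum hypothesis into membership in the enlargement $A_{\varepsilon}$, followed by the identity $\gra A_{\varepsilon}=\gra A$. The only point requiring a moment's care is the choice of $\varepsilon$: one must accommodate both $m\geq0$ (where $\varepsilon=0$ works) and $m<0$ (where $\varepsilon=-m<\alpha\beta$), and observe that it is precisely the \emph{strict} inequality $m>-\alpha\beta$ that secures $\varepsilon<\alpha\beta$. Notably, beyond what is already built into the definitions, no further appeal to maximal monotonicity of $A$ is needed.
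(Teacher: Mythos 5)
Your proof is correct and takes exactly the approach the paper has in mind: the paper states this fact without proof as ``immediate from the definitions,'' and your argument is precisely that immediate translation --- the finiteness of the infimum places $(x,x^*)$ in $\gra A_{\varepsilon}$ for $\varepsilon=\max\{0,-m\}$, non-enlargeability then gives $(x,x^*)\in\gra A$, and the pair itself serves as the (BR) witness. One small remark: the bound $\varepsilon<\alpha\beta$ that you single out as the delicate point is never actually used; any finite $\varepsilon\geq 0$ with $m\geq-\varepsilon$ suffices, since non-enlargeability holds for \emph{every} $\varepsilon\geq 0$.
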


Fact \ref{f:sub05} and the subsequent remark refers to a case in which
all enlargements of $A$ coincide, or, equivalently, the Fitzpatrick
family is a singleton. It is natural to deduce that a non-enlargeable
operator will also have a single element in its Fitzpatrick family.

\begin{corollary}\label{CEl:1}
Let  $A\colon X \To X^*$ be maximally monotone. Then
 $A$ is non-enlargeable
if and only if  $F_A=\iota_{\gra A}+\langle\cdot,\cdot\rangle$
and hence  $\mathcal{F}_A=\big\{
\iota_{\gra A}+\langle\cdot,\cdot\rangle\big\}$.
\end{corollary}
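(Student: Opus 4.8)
The plan is to prove Corollary~\ref{CEl:1} by combining Svaiter's characterization in Fact~\ref{CElL:1} with the Fitzpatrick-family result in Fact~\ref{GRF:2} and Corollary~\ref{GRF:3}. First I would establish the forward implication. Suppose $A$ is non-enlargeable. By Fact~\ref{CElL:1} this gives us two pieces of information: $\gra A=\dom F_A$, and $\gra A$ is convex. The second fact lets us invoke Corollary~\ref{GRF:3}, so that $g:=\langle\cdot,\cdot\rangle+\iota_{\gra A}$ is the \emph{largest} member of the Fitzpatrick family $\mathcal F_A$, while $F_A$ is the smallest. The goal is therefore to force these two extremes to coincide, i.e.\ to show $F_A=g$ everywhere on $X\times X^*$.

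The key step is to argue that $F_A$ and $g$ agree by comparing their domains and values. On $\gra A$ both functions equal $\langle\cdot,\cdot\rangle$ (by \eqref{FFb} for $F_A$, and directly from the definition of $g$ since $\iota_{\gra A}=0$ there). Off $\gra A$, the function $g$ takes the value $+\infty$ because $\iota_{\gra A}=+\infty$ there. So it suffices to show $F_A(x,x^*)=+\infty$ whenever $(x,x^*)\notin\gra A$. But this is exactly what the identity $\gra A=\dom F_A$ provides: if $(x,x^*)\notin\gra A=\dom F_A$, then by definition of the domain $F_A(x,x^*)=+\infty$. Hence $F_A$ and $g$ agree both on and off $\gra A$, giving $F_A=\iota_{\gra A}+\langle\cdot,\cdot\rangle$. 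Since $F_A$ is the minimum and $g=F_A$ is the maximum of $\mathcal F_A$, the family is squeezed to the single element $\{\iota_{\gra A}+\langle\cdot,\cdot\rangle\}$.

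For the converse, assume $F_A=\iota_{\gra A}+\langle\cdot,\cdot\rangle$. Then $\dom F_A=\dom\big(\iota_{\gra A}+\langle\cdot,\cdot\rangle\big)=\gra A$, since the indicator is finite precisely on $\gra A$ and the pairing is finite everywhere. Thus $\gra A=\dom F_A$, which by Fact~\ref{CElL:1} is equivalent to $A$ being non-enlargeable. The final assertion about $\mathcal F_A$ being a singleton then follows as above, or can simply be restated from the forward direction.

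The main obstacle is conceptual rather than technical: one must recognize that Svaiter's condition $\gra A=\dom F_A$ is the precise bridge that collapses the gap between the minimal Fitzpatrick function $F_A$ and the maximal candidate $g=\iota_{\gra A}+\langle\cdot,\cdot\rangle$. The convexity of $\gra A$ (needed to apply Corollary~\ref{GRF:3} and thereby identify $g$ as a genuine member of $\mathcal F_A$) comes for free from Fact~\ref{CElL:1} in the non-enlargeable case, so no separate verification is required. Everything else is a matter of matching domains and values, with no delicate estimate or limiting argument involved.
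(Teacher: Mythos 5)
Your proposal is correct and follows essentially the same route as the paper's own proof: both directions rest on Svaiter's characterization (Fact~\ref{CElL:1}), with the identity $F_A=\iota_{\gra A}+\langle\cdot,\cdot\rangle$ obtained by combining $\gra A=\dom F_A$ with Fitzpatrick's inequality/equality (Fact~\ref{f:Fitz}), and the singleton claim for $\mathcal{F}_A$ coming from Corollary~\ref{GRF:3}. You merely spell out in more detail the domain-and-values matching that the paper compresses into one line.
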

\begin{proof}
``$\Rightarrow$":
  By Fact~\ref{CElL:1}, we have $\gra A$ is convex.
  By Fact~\ref{f:Fitz} and Fact~\ref{CElL:1},
we have $F_A=\iota_{\gra A}+\langle \cdot,\cdot\rangle$.
Then by Corollary~\ref{GRF:3}, $\mathcal{F}_A=\big\{
\iota_{\gra A}+\langle\cdot,\cdot\rangle\big\}$.
``$\Leftarrow$":
Apply directly Fact~\ref{CElL:1}.
\end{proof}

\begin{remark}The condition that $\mathcal{F}_A$ is singleton does not guarantee
that $\gra A$ is convex. For example, let $f:X\rightarrow\RX$
 be a proper lower semicontinuous sublinear function.
Then by Fact~\ref{f:sub05}, $\mathcal{F}_A$ is singleton but $\gra \partial f$ is not necessarily convex.
\end{remark}

\section{Non-Enlargeable  Monotone Linear Relations}
\label{secneo}

We begin with a basic characterization:

\begin{theorem}\label{CElC:01}
 Let $A\colon X \To X^*$ be a maximally monotone linear relation such
 that $\gra A$ is weak$\times$weak$^*$ closed.  Then $A$ is
 non-enlargeable if and only if $\gra (-A^*)\cap X\times
 X^*\subseteq\gra A$. In this situation, we have that $\langle
 x,x^*\rangle=0, \forall (x,x^*)\in\gra (-A^*)\cap X\times X^*$.
 \end{theorem}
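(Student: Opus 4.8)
The plan is to run everything through Svaiter's characterization (Fact~\ref{CElL:1}): $A$ is non-enlargeable if and only if $\dom F_A=\gra A$. Since $F_A=\scal{\cdot}{\cdot}$ on $\gra A$ forces $\gra A\subseteq\dom F_A$ for free, the whole theorem becomes a statement about when $\dom F_A\subseteq\gra A$. First I would unwind the adjoint definition to obtain the explicit description
\[
S:=\gra(-A^*)\cap(X\times X^*)=\big\{(x,x^*)\in X\times X^*\mid \scal{x}{a^*}+\scal{a}{x^*}=0,\ \forall (a,a^*)\in\gra A\big\}.
\]
Because $\gra A$ is a subspace containing $(0,0)$ and $A$ is monotone, the quadratic form $Q(a,a^*):=\scal{a}{a^*}$ is positive semidefinite on $\gra A$; its Cauchy--Schwarz inequality identifies the radical, giving
\[
\big\{(a,a^*)\in\gra A\mid \scal{a}{a^*}=0\big\}=\gra A\cap S.
\]

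Next I would record two inclusions. If $(x,x^*)\in S$ then the linear part of $F_A$ vanishes identically over $\gra A$, so $F_A(x,x^*)=\sup_{(a,a^*)\in\gra A}\big(-\scal{a}{a^*}\big)=0$; hence $S\subseteq\dom F_A$. The weak$\times$weak$^*$ closedness of $\gra A$ enters through the bipolar theorem: the dual of $X\times X^*$ under the weak$\times$weak$^*$ topology is $X^*\times X$, and under the induced pairing the annihilator of $\gra A$ coincides, up to the coordinate swap coming from the definition of the adjoint, with $S$. Closedness therefore yields
\[
(x,x^*)\in\gra A\iff \scal{x}{a^*}+\scal{a}{x^*}=0\ \text{ for all }(a,a^*)\in S.
\]

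With these facts the equivalence is short. For ``$\Rightarrow$'', if $A$ is non-enlargeable then $\dom F_A=\gra A$, and the first inclusion gives $S\subseteq\dom F_A=\gra A$. For ``$\Leftarrow$'', assume $S\subseteq\gra A$ and pick $(x,x^*)\in\dom F_A$. On the radical $\gra A\cap S$ the competitor in $F_A$ reduces to the linear functional $(a,a^*)\mapsto\scal{x}{a^*}+\scal{a}{x^*}$, so finiteness of $F_A(x,x^*)$ forces this functional to vanish there; but $S\subseteq\gra A$ makes $\gra A\cap S=S$, hence it vanishes on all of $S$, and the bipolar identity then gives $(x,x^*)\in\gra A$. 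Thus $\dom F_A=\gra A$ and $A$ is non-enlargeable. The final assertion is immediate: once $S\subseteq\gra A$, substituting $(a,a^*)=(x,x^*)$ into the defining relation of $S$ for any $(x,x^*)\in S$ gives $2\scal{x}{x^*}=0$.

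I expect the delicate point to be the bipolar identity above: this is exactly where weak$\times$weak$^*$ closedness is indispensable, and it requires pairing $X\times X^*$ with $X^*\times X$ correctly and matching the annihilator of $\gra A$ with $S$ through the adjoint definition. The semidefiniteness/Cauchy--Schwarz identification of the radical and the ``finiteness forces the linear functional to vanish'' step are routine once the form $Q$ is set up.
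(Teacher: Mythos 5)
Your proposal is correct and takes essentially the same route as the paper's own proof: both arguments run the equivalence through Svaiter's characterization (Fact~\ref{CElL:1}), both establish that $F_A\equiv 0$ on $\gra(-A^*)\cap X\times X^*$ to get the forward implication, and both use the bipolar/annihilator identity supplied by weak$\times$weak$^*$ closedness of $\gra A$ for the reverse implication. The differences are only cosmetic: you phrase the reverse direction contrapositively (finiteness of $F_A(x,x^*)$ forces the linear functional to vanish on $S$, hence membership in $\gra A$) where the paper instead derives the formula $F_A=\iota_{\gra A}+\langle\cdot,\cdot\rangle$ and applies Corollary~\ref{CEl:1}, and you prove the vanishing $\langle x,x^*\rangle=0$ on $\gra(-A^*)\cap X\times X^*$ by direct substitution rather than citing Svaiter's Lemma~2.1(2).
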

 \begin{proof}
 ``$\Rightarrow$":
 By Corollary~\ref{CEl:1},
 \begin{align}F_A=\iota_{\gra A}+\langle\cdot,\cdot\rangle.\label{TheEF:1}
 \end{align}
  Let $(x,x^*)\in\gra(-A^*)\cap X\times X^*$. Then we have
 \begin{align}
 F_A(x,x^*)&=\sup_{(a,a^*)\in\gra A}\big\{\langle a^*,x\rangle
 +\langle a,x^*\rangle-\langle a,a^*\rangle\big\}\nonumber\\
 &=\sup_{(a,a^*)\in\gra A}\big\{-\langle a,a^*\rangle\big\}\nonumber\\
 &=0.\label{CRNon:1}
 \end{align}
 Then by \eqref{CRNon:1}, $(x,x^*)\in\gra A$ and $\langle x,x^*\rangle=0$.
 Hence $\gra (-A^*)\cap X\times X^*\subseteq\gra A$.

``$\Leftarrow$": By the assumption that $\gra A$ is weak$\times$weak$^*$ closed, we have
\begin{align}
\left[\gra(-A^*)\cap X\times X^*\right]^{\bot}\cap X^*\times X
=\left[\big(\gra A^{-1}\big)^{\bot}\cap X\times X^*\right]^{\bot}\cap X^*\times X=\gra A^{-1}.\label{WWSA:1}
\end{align}
By \cite[Lemma~2.1(2)]{SV}, we have
\begin{align}\langle z,z^*\rangle=0,\quad \forall(z,z^*)
\in \gra (-A^*)\cap X\times X^*.\label{TheEF:03}
 \end{align}
Hence $A^*|_X$ is skew.  Let $(x,x^*)\in X\times X^*$. Then by \eqref{TheEF:03},  we have
 \begin{align}
 F_A(x,x^*)&=\sup_{(a,a^*)\in\gra A}\big\{\langle x, a^*\rangle+
 \langle x^*, a\rangle-\langle a,a^*\rangle\big\}\nonumber\\
 &\geq\sup_{(a,a^*)\in\gra(-A^*)\cap X\times X^*}\big\{\langle x, a^*\rangle+
 \langle x^*, a\rangle-\langle a,a^*\rangle\big\}\nonumber\\
  &=\sup_{(a,a^*)\in\gra(-A^*)\cap X\times X^*}\big\{\langle x, a^*\rangle+
 \langle x^*, a\rangle\big\}\nonumber\\
 &=\iota_{\big(\gra(-A^*)\cap X\times X^*\big)^{\bot}\cap X^*\times X}(x^*,x)\nonumber\\
 &=\iota_{\gra A}(x,x^*)\quad\text{(by \eqref{WWSA:1})}.\label{SVCK:2}
 \end{align}
Hence by Fact~\ref{f:Fitz}
 \begin{align}
 F_A(x,x^*)=\langle x,x^*\rangle+\iota_{\gra A}(x,x^*).
 \label{SVCK:3}
 \end{align}
 Hence by Corollary~\ref{CEl:1}, $A$ is non-enlargeable.
 \end{proof}

The following corollary, which holds in a general Banach space, provides
a characterization of non-enlargeable operators under a
closedness assumption on the graph. A characterization
 of non-enlargeable linear operators  for reflexive spaces (in which
the closure assumption is hidden) was established by Svaiter in
\cite[Theorem~2.5]{SV}.

 \begin{corollary}\label{CEl:2}
Let  $A\colon X \To X^*$ be maximally monotone
and suppose that $\gra A$ is weak$\times$weak$^*$ closed.
Select  $(a,a^*)\in\gra A$ and set $\gra \widetilde{A}:=\gra A-\{(a,a^*)\}$.
Then
$A$ is non-enlargeable if and only if $\gra A$ is convex and
 $\gra (-\widetilde{A}^*)\cap X\times X^*\subseteq\gra \widetilde{A}$.
In particular, $\langle x,x^*\rangle=0, \forall (x,x^*)\in\gra \widetilde{A}^*\cap
X\times X^*$.
\end{corollary}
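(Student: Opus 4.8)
The plan is to reduce the corollary to the already-established Theorem~\ref{CElC:01} by translating the graph of $A$ so that it passes through the origin, thereby turning $\widetilde{A}$ into a \emph{maximally monotone linear relation} to which that theorem directly applies. The whole argument rests on two easy observations: that the relevant properties are invariant under the shift $(x,x^*)\mapsto (x,x^*)-(a,a^*)$, and that ``convex graph'' can be upgraded to ``affine graph'' for maximally monotone $A$.

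First I would record the translation invariance. With $\gra\widetilde{A}=\gra A-\{(a,a^*)\}$, the substitution $(x,x^*)=(u+a,u^*+a^*)$, $(y,y^*)=(v+a,v^*+a^*)$ leaves the quantity $\langle x-y,x^*-y^*\rangle$ unchanged. Consequently $\widetilde{A}$ is monotone; it is maximally monotone because the shift is a bijection of $\ZZZ$ carrying monotone extensions to monotone extensions; and $\gra\widetilde{A}$ is weak$\times$weak$^*$ closed because the shift is a weak$\times$weak$^*$ homeomorphism. The same substitution in the defining inequality of \eqref{Enl:1} gives $\gra A_{\Elag}=\gra\widetilde{A}_{\Elag}+\{(a,a^*)\}$ for every $\varepsilon\geq0$, so that $A$ is non-enlargeable if and only if $\widetilde{A}$ is. Next I would note that whenever $\gra A$ is convex, Fact~\ref{affine:L1} forces $\gra A$ to be affine, whence $\gra\widetilde{A}$ is a linear subspace; that is, $\widetilde{A}$ is a weak$\times$weak$^*$ closed maximally monotone linear relation, and Theorem~\ref{CElC:01} is available for it.

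With these in hand both directions are immediate. For ``$\Rightarrow$'', non-enlargeability of $A$ yields convexity of $\gra A$ by Fact~\ref{CElL:1}; by the preceding paragraph $\widetilde{A}$ is then a non-enlargeable, weak$\times$weak$^*$ closed maximally monotone linear relation, so Theorem~\ref{CElC:01} delivers $\gra(-\widetilde{A}^*)\cap X\times X^*\subseteq\gra\widetilde{A}$. For ``$\Leftarrow$'', convexity of $\gra A$ again makes $\widetilde{A}$ a maximally monotone linear relation, and the inclusion $\gra(-\widetilde{A}^*)\cap X\times X^*\subseteq\gra\widetilde{A}$ is precisely the hypothesis of the converse half of Theorem~\ref{CElC:01}; hence $\widetilde{A}$, and therefore $A$, is non-enlargeable. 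The ``in particular'' clause is the final assertion of Theorem~\ref{CElC:01} applied to $\widetilde{A}$, using that $\langle x,x^*\rangle=0$ is unaffected by the sign change $(x,x^*)\mapsto(x,-x^*)$ that relates $\gra\widetilde{A}^*$ and $\gra(-\widetilde{A}^*)$.

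The only genuine work, and the step I would verify most carefully, is the translation invariance of the enlargement and of the weak$\times$weak$^*$ closedness; once those are secure the rest is bookkeeping. The conceptual crux is recognizing that Fact~\ref{CElL:1} together with Fact~\ref{affine:L1} promote a merely convex graph to an affine one, which is exactly what permits a seemingly nonlinear maximally monotone $A$ to be treated by the linear-relation theorem after a single translation.
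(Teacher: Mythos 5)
Your proposal is correct and follows essentially the same route as the paper: both reduce to Theorem~\ref{CElC:01} by using Fact~\ref{CElL:1} and Fact~\ref{affine:L1} to upgrade the convex graph to an affine one, so that the translate $\widetilde{A}$ becomes a maximally monotone linear relation with weak$\times$weak$^*$ closed graph. The only difference is that you spell out the translation-invariance of monotonicity, maximality, closedness, and the enlargements, which the paper leaves implicit in the sentence ``so is $\widetilde{A}$.''
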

 \begin{proof} ``$\Rightarrow$":
 By the assumption that $A$ is non-enlargeable,
 so is $\widetilde{A}$.  By Fact~\ref{CElL:1}, $\gra A$ is convex
 and then $\gra A$ is affine by Fact~\ref{affine:L1}. Thus $\widetilde{A}$
 is a linear relation. Now we can apply Theorem~\ref{CElC:01} to $\widetilde{A}$.
 ``$\Leftarrow$": Apply Fact~\ref{affine:L1} and Theorem~\ref{CElC:01} directly.
 \end{proof}

\begin{remark}
We cannot remove the condition that ``$\gra A$ is convex" in
Corollary~\ref{CEl:2}. For example, let $X=\RR^n$ with the Euclidean
norm. Suppose that $f:=\|\cdot\|$.  Then $\partial f$ is maximally
monotone by Fact~\ref{SubMR}, and hence $\gra \partial f$ is
weak$\times$weak$^*$ closed. Now we show that
\begin{align}
\gra (\partial f)^*=\{(0,0)\}.\label{IExCo:01}
\end{align}
Note that
\begin{align}
\partial f(x)&=\begin{cases}B_X,\;&\text{if}\; x=0;\label{esee:3}\\
\{\frac{x}{\|x\|}\},\;&\text{otherwise}.\end{cases}
\end{align}
Let $(z,z^*)\in\gra (\partial f)^*$. By \eqref{esee:3}, we have $(0, B_X)\subseteq\gra \partial f$ and thus
\begin{align}
\langle -z, B_X\rangle=0.
\end{align}
Thus $z=0$. Hence
\begin{align}
\langle z^*, a \rangle=0,\quad \forall a\in \dom \partial f.\label{IExCo:1}
\end{align}
Since $\dom \partial f=X$, $z^*=0$ by \eqref{IExCo:1}. Hence
$(z,z^*)=(0,0)$ and thus \eqref{IExCo:01} holds. By \eqref{IExCo:01},
$\gra -(\partial f)^*\subseteq\gra \partial f$.  However, $\gra
\partial f$ is not convex.  Indeed, let
$e_k=(0,\ldots,0,1,0,\cdots,0):$ the $k$th entry is $1$ and the others
are $0$. Take \begin{align*}
  a=\frac{e_1-e_2}{\sqrt{2}}\quad\text{and}\quad
  b=\frac{e_2-e_3}{\sqrt{2}}.
\end{align*}
Then $(a, a)\in \gra \partial f$ and $(b, b)\in \gra \partial f$ by \eqref{esee:3}, but
\begin{align*}
\frac{1}{2}(a, a)+\frac{1}{2}(b, b)\notin\gra \partial f.
\end{align*}
Hence $\partial f$ is enlargeable by Fact~\ref{CElL:1}.
\end{remark}

In the case of a skew operator we can be more exacting:

\begin{corollary}\label{CElC:1}
Let  $A\colon X \To X^*$ be a maximally monotone and
skew operator and $\varepsilon\geq0$. Then
\begin{enumerate}
\item\label{EN:em01}
$\gra A_{\Elag}=\{(x,x^*)
\in\gra (-A^*)\cap X\times X^*\mid \langle x,x^*\rangle\geq-\varepsilon\}.$

\item \label{EN:em02}$A$ is non-enlargeable if and only if $\gra A=\gra (-A^*)\cap X\times X^*$.

\item \label{EN:em02b} $A$ is non-enlargeable if and only if $\dom A=\dom A^*\cap X$.
\item \label{EN:em03}Assume that $X$ is reflexive.  Then $F_{A^*}=\iota_{\gra A^*}+\scal{\cdot}{\cdot}$ and
hence $A^*$ is non-enlargeable.
\end{enumerate}
\end{corollary}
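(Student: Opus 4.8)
The common engine behind all four parts is skewness: since $\langle a,a^*\rangle=0$ for every $(a,a^*)\in\gra A$, the Fitzpatrick function collapses to
\[
F_A(x,x^*)=\sup_{(a,a^*)\in\gra A}\big(\langle x,a^*\rangle+\langle a,x^*\rangle\big),
\]
a supremum of a linear functional over the subspace $\gra A$, which is $0$ when that functional annihilates $\gra A$ and $+\infty$ otherwise. Unwinding the definition of the adjoint, the annihilation condition is exactly $(x,x^*)\in\gra(-A^*)\cap X\times X^*$, so $F_A=\iota_{\gra(-A^*)\cap X\times X^*}$ (this is the same computation used in Theorem~\ref{CElC:01}). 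For (i) I would then substitute this into \eqref{Enl:1}: on the subspace $\gra(-A^*)\cap X\times X^*$ the condition $F_A(x,x^*)\le\langle x,x^*\rangle+\varepsilon$ reads $0\le\langle x,x^*\rangle+\varepsilon$, while off that subspace $F_A=+\infty$ excludes the point, giving precisely the stated description of $\gra A_{\Elag}$.

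For (ii) I would combine (i) with non-enlargeability. One inclusion, $\gra A\subseteq\gra(-A^*)\cap X\times X^*$, is just skewness. For the converse I use that $\gra(-A^*)\cap X\times X^*$ is a linear subspace: if some point of it had $\langle x,x^*\rangle=c<0$, then choosing $\varepsilon=-c$ in (i) would put that point into $\gra A_{\Elag}=\gra A$, whence skewness forces $\langle x,x^*\rangle=0$, a contradiction. Hence $\langle\cdot,\cdot\rangle\ge0$ on the whole subspace, so $\gra A=\gra A_0=\gra(-A^*)\cap X\times X^*$; the reverse implication is immediate from (i).

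For (iii) I would first record the projection identity $P_X\big[\gra(-A^*)\cap X\times X^*\big]=\dom A^*\cap X$, which is a direct unwinding of the adjoint. Then (ii) gives ``$\Rightarrow$'' by projecting the equality $\gra A=\gra(-A^*)\cap X\times X^*$. The only real gap is the ``$\Leftarrow$'' direction, namely the fibre over $0$: I must show $(0,v^*)\in\gra(-A^*)\cap X\times X^*$ implies $(0,v^*)\in\gra A$. Unwinding, the hypothesis says $v^*\in(\dom A)^\perp$, and the key auxiliary fact is $A(0)=(\dom A)^\perp$ for a maximally monotone skew linear relation: $A(0)\subseteq(\dom A)^\perp$ is skewness, while any $v^*\in(\dom A)^\perp$ is monotonically related to $\gra A$ and so lies in $A(0)$ by maximality. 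Given equal domains from the hypothesis and equal fibres over $0$, the two subspaces coincide, so $A$ is non-enlargeable by (ii).

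The real work is (iv), where reflexivity ($X^{**}=X$) lets me regard $\gra A^*\subseteq X\times X^*$ and, crucially, feed candidate pairs $(a,-a^*)$ back into the maximality of $A$. First I would show $A^*$ is monotone: for $(a,a^*)\in\gra A^*$ with $\langle a,a^*\rangle\le0$, the adjoint relation $\langle y,a^*\rangle=\langle a,y^*\rangle$ together with skewness gives $\langle a-y,-a^*-y^*\rangle=-\langle a,a^*\rangle\ge0$ for all $(y,y^*)\in\gra A$, so $(a,-a^*)$ is monotonically related to $\gra A$ and hence lies in $\gra A$ by maximality; skewness then forces $\langle a,a^*\rangle=0$, ruling out negative values. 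With $A^*$ monotone, $F_{A^*}=\langle\cdot,\cdot\rangle$ on $\gra A^*$ is automatic. Off the graph I would exploit $\gra(-A)\subseteq\gra A^*$: if $(x,x^*)\notin\gra A^*$ there is $(y_0,y_0^*)\in\gra A$ with $\langle y_0,x^*\rangle\neq\langle x,y_0^*\rangle$, and inserting the null directions $(ty_0,-ty_0^*)\in\gra A^*$ into $F_{A^*}$ produces the value $t\big(\langle y_0,x^*\rangle-\langle x,y_0^*\rangle\big)$, unbounded in $t$, so $F_{A^*}(x,x^*)=+\infty$. This yields $F_{A^*}=\iota_{\gra A^*}+\langle\cdot,\cdot\rangle$, and non-enlargeability of $A^*$ follows from \eqref{Enl:1} (equivalently Corollary~\ref{CEl:1}). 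I expect this to be the main obstacle: securing the monotonicity of $A^*$ and the blow-up off the graph both rely on the reflexive identification, so that $(a,-a^*)$ and $(ty_0,-ty_0^*)$ are genuine points of $X\times X^*$ on which the maximality and skewness of $A$ can be invoked.
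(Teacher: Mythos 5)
Your proposal is correct, and parts of it take a genuinely different route from the paper's proof. In (i) you derive $F_A=\iota_{\gra(-A^*)\cap X\times X^*}$ from scratch, as the supremum of a linear functional over the subspace $\gra A$, where the paper simply cites \cite[Lemma~3.1]{BBWY3}; and in (ii) you replace the paper's appeal to Svaiter's characterization (Fact~\ref{CElL:1}: non-enlargeable $\Leftrightarrow\gra A=\dom F_A$) by an elementary argument in $\varepsilon$: a point of the subspace with pairing $c<0$ would lie in $\gra A_{-c}=\gra A$ and hence, by skewness, have pairing $0$, a contradiction. Your (iii) reorganizes rather than changes the paper's argument: where you prove the projection identity, the fibre lemma $A(0)=(\dom A)^{\perp}$, and then use subspace algebra, the paper shows directly that any $(x,x^*)\in\gra(-A^*)\cap X\times X^*$ with $x\in\dom A$ satisfies $\langle x-a,x^*-a^*\rangle=0$ for all $(a,a^*)\in\gra A$ and invokes maximality once; the ingredients (maximality plus the vanishing pairing) are the same. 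The real divergence is (iv): the paper obtains maximal monotonicity of $A^*$ from the Br\'ezis--Browder theorem \cite{Brezis-Browder} and then applies Theorem~\ref{CElC:01} and Corollary~\ref{CEl:1} to $A^*$, whereas you bypass Br\'ezis--Browder entirely, proving monotonicity of $A^*$ by feeding $(a,-a^*)$ into the maximality of $A$ and computing $F_{A^*}$ by hand ($=\scal{\cdot}{\cdot}$ on $\gra A^*$ by monotonicity, $=+\infty$ off the graph by blow-up along the null directions $(ty_0,-ty_0^*)$). Your version is more self-contained and elementary; the paper's is shorter but rests on deeper cited machinery. One small caveat: your parenthetical appeal to Corollary~\ref{CEl:1} at the end of (iv) is not yet licensed at that point, since that corollary presupposes maximal monotonicity of $A^*$, which you have only shown to be monotone; your primary route via \eqref{Enl:1} needs only monotonicity, however, so the argument stands, and maximality of $A^*$ then follows from non-enlargeability by the remark after \eqref{Enl:1} in the paper.
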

\begin{proof}
\ref{EN:em01}: By \cite[Lemma~3.1]{BBWY3}, we have
\begin{align}F_A=\iota_{\gra (-A^*)\cap X\times X^*}.\label{FPaSL:1}\end{align}
Hence $(x,x^*)\in\gra A_{\Elag}$ if and only if $F_A(x,x^*)\le \langle x,x^*\rangle + \ve$. This yields
$(x,x^*)\in \gra (-A^*)\cap X\times X^*$ and  $0\le \langle x,x^*\rangle + \ve$.

\ref{EN:em02}: From Fact~\ref{CElL:1} we have that $\dom F_A=\gra
A$. The claim now follows by combining the latter with \eqref{FPaSL:1}.

\ref{EN:em02b}: For  ``$\Rightarrow$": use ~\ref{EN:em02}.
``$\Leftarrow$": Since $A$ is skew, we have $\gra (-A^*)\cap X\times
X^*\supseteq \gra A$. Using this and \ref{EN:em02}, it suffices to
show that $\gra (-A^*)\cap X\times X^*\subseteq \gra A$.  Let
$(x,x^*)\in\gra (-A^*)\cap X\times X^*$. By the assumption, $x\in\dom
A$. Let $y^*\in Ax$. Note that $\langle x,-x^*\rangle=\langle
x,y^*\rangle=0$, where the first equality follows from the definition
of $A^*$ and the second one from the fact that $A$ is skew. In this
case we claim that $(x,x^*)$ is monotonically related to $\gra A$. Indeed, let
$(a,a^*)\in\gra A$.  Since $A$ is skew we have $\langle a, a^*\rangle= 0$. Thus
\[
\langle x-a,x^*-a^*\rangle=\langle x,x^*\rangle-\langle (x^*,x),
(a,a^*)\rangle+\langle a, a^*\rangle=0
\]
since $(x^*,x)\in (\gra A)^{\bot}$ and $\langle x,x^*\rangle=\langle a, a^*\rangle=0$. Hence
$(x,x^*)$ is monotonically related to $\gra A$. By maximality we conclude $(x,x^*)\in\gra A$.
Hence $\gra (-A^*)\cap X\times X^*\subseteq \gra A$.

\ref{EN:em03}: Now assume that $X$ is reflexive.  By
\cite[Theorem~2]{Brezis-Browder} (or see \cite{Yao, Si04}),
$A^*$ is maximally monotone.  Since $\gra A\subseteq \gra (-A^*)$ we
deduce that $\gra (-A^{**})=\gra (-A)\subseteq \gra A^*$. The latter
inclusion and Theorem~\ref{CElC:01} applied to the operator $A^*$
yields $A^*$ non-enlargeable.  The conclusion now follows by
applying Corollary~\ref{CEl:1} to $A^*$.
\end{proof}

\subsection{Limiting examples and remarks}

It is possible for a non-enlargeable maximally monotone operator to be
non-skew. This is the case for the operator $A^*$ in
Example~\ref{Exam:eL1}.

\begin{example}
Let $A\colon X \To X^*$ be a non-enlargeable maximally
 monotone operator. By Fact~\ref{CElL:1} and
Fact~\ref{affine:L1}, $\gra A$ is affine.  Let $f:X\rightarrow \RX$ be a proper
lower semicontinuous convex function with $\dom A\cap\inte\dom
\partial f\neq\varnothing$ such that $\dom A\cap\dom \partial f$ is
not an affine set.  By Fact~\ref{domain:L1}, $A+\partial f$ is
maximally monotone. Since $\gra (A+\partial f)$ is not affine,
$A+\partial f$ is enlargeable.\endproof
\end{example}

The operator in the following example was studied in detail in
\cite{BWY7}.

\begin{fact}\label{FE:1}
Suppose that  $X=\ell^2$,  and that
$A:\ell^2\rightrightarrows \ell^2$ is given by
\begin{align}Ax:=\frac{\bigg(\sum_{i< n}x_{i}-\sum_{i> n}x_{i}\bigg)_{n\in\NN}}{2}
=\bigg(\sum_{i< n}x_{i}+\tfrac{1}{2}x_n\bigg)_{n\in\NN},
\quad \forall x=(x_n)_{n\in\NN}\in\dom A,\label{EL:1}\end{align}
where $\dom A:=\Big\{ x:=(x_n)_{n\in\NN}\in \ell^{2}\mid \sum_{i\geq 1}x_{i}=0,
 \bigg(\sum_{i\leq n}x_{i}\bigg)_{n\in\NN}\in\ell^2\Big\}$ and $\sum_{i<1}x_{i}:=0$. Now \cite[Propositions~3.6]{BWY7} states that
\begin{align}
\label{PF:a2}
A^*x= \bigg(\thalb x_n + \sum_{i> n}x_{i}\bigg)_{n\in\NN},
\end{align}
where
\begin{equation*}
x=(x_n)_{n\in\NN}\in\dom A^*=\bigg\{ x=(x_n)_{n\in\NN}\in \ell^{2}\;\; \bigg|\;\;
 \bigg(\sum_{i> n}x_{i}\bigg)_{n\in\NN}\in \ell^{2}\bigg\}.
\end{equation*}
Then $A$ is an at most single-valued linear relation
such that the following hold (proofs of all claims are in brackets).
\begin{enumerate}
 \item
 $A$\label{NEC:1} is maximally monotone and skew (\cite[Propositions 3.5 and 3.2]{BWY7}).
\item\label{NEC:2} $A^*$ is maximally monotone but not skew
 (\cite[Theorem 3.9 and Proposition 3.6]{BWY7}).
\item \label{NEC:3}$\dom A$ is dense in $\ell^2$
 (\cite[Theorem 2.5]{PheSim}), and $\dom A\subsetneqq\dom A^*$ (\cite[Proposition 3.6]{BWY7}).
\item\label{NEC:5} $\langle A^*x, x\rangle=\tfrac{1}{2}s^2, \quad
  \forall x=(x_n)_{n\in\NN}\in\dom A^*\ \text{with}\quad
  s:=\sum_{i\geq1} x_i$ (\cite[Proposition 3.7]{BWY7}).
\end{enumerate}
\end{fact}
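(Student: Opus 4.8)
The plan is to treat the four assertions as consequences of two core computations: the formula for the adjoint $A^*$ together with its exact domain, and the scalar identity $\scal{A^*x}{x}=\thalb s^2$. Everything else then falls out with little effort. First I would record the elementary algebra: on $\dom A$ one has $\sum_{i\geq1}x_i=0$, so $\sum_{i>n}x_i=-\sum_{i\leq n}x_i$, and substituting this into $\thalb(\sum_{i<n}x_i-\sum_{i>n}x_i)$ recovers $\sum_{i<n}x_i+\thalb x_n$; this confirms the two expressions in \eqref{EL:1} agree. Since the defining conditions are preserved under addition and scalar multiplication, $\dom A$ is a linear subspace and $A$ is an (at most single-valued) linear relation. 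Skewness is the identity $\scal{Ax}{x}=0$ for $x\in\dom A$: expanding, $\scal{Ax}{x}=\sum_{i<n}x_ix_n+\thalb\sum_n x_n^2$, and symmetrizing the double sum together with $(\sum_i x_i)^2=0$ gives $\sum_{i<n}x_ix_n=-\thalb\sum_i x_i^2$, so the two terms cancel. Skewness forces $\scal{A(x-y)}{x-y}=0$, hence monotonicity; this settles the skew and monotone content of claim \ref{NEC:1}, with maximality deferred.

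Next I would compute $A^*$. By density (established below) it suffices to test the defining relation $\scal{a}{A^*y}=\scal{Aa}{y}$ against finitely supported $a$. Interchanging the order of summation in $\scal{Aa}{y}=\sum_n(\sum_{i<n}a_i+\thalb a_n)y_n$ yields $\sum_i a_i(\sum_{n>i}y_n+\thalb y_i)$, which identifies $(A^*y)_i=\thalb y_i+\sum_{n>i}y_n$ and pins the domain down to those $y$ for which $(\sum_{i>n}y_i)_n\in\ell^2$. The identity $\scal{A^*x}{x}=\thalb s^2$ with $s=\sum_{i\geq1}x_i$ (claim \ref{NEC:5}) comes from the same symmetrization: $\scal{A^*x}{x}=\thalb\sum_n x_n^2+\sum_{n<i}x_nx_i=\thalb\sum x_n^2+\thalb(s^2-\sum x_i^2)=\thalb s^2$. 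In particular $A^*$ is monotone, and testing at $e_1$ (for which $s=1$) gives $\scal{A^*e_1}{e_1}=\thalb\neq0$, so $A^*$ is not skew; this is the nontrivial half of claim \ref{NEC:2}.

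For maximality of $A$ I would argue directly. Let $(x,x^*)$ be monotonically related to $\gra A$. Since $\dom A$ is a subspace, replacing $a$ by $ta$ makes $\scal{x-ta}{x^*-tAa}$ a quadratic in $t$ whose leading coefficient $\scal{a}{Aa}$ vanishes by skewness; nonnegativity for all $t$ forces $\scal{a}{x^*}+\scal{Aa}{x}=0$ for every $a\in\dom A$ and leaves $\scal{x}{x^*}\geq0$. The first condition says exactly $(x,-x^*)\in\gra A^*$, i.e. $x^*=-A^*x$; then $\scal{x}{x^*}=-\scal{A^*x}{x}=-\thalb s^2\leq0$, so together with $\scal{x}{x^*}\geq0$ we get $s=0$. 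But $s=0$ turns the condition $(\sum_{i>n}x_i)_n\in\ell^2$ into $(\sum_{i\leq n}x_i)_n\in\ell^2$ with $\sum x_i=0$, i.e. $x\in\dom A$, and a short check gives $-A^*x=Ax$, so $(x,x^*)\in\gra A$. This proves $A$ maximally monotone, completing claim \ref{NEC:1}. For the maximality of $A^*$ in claim \ref{NEC:2}, since $\ell^2$ is reflexive and $A$ is now maximally monotone with closed graph, the Brezis--Browder theorem \cite{Brezis-Browder} (already invoked in Corollary~\ref{CElC:1}\ref{EN:em03}) yields that $A^*$ is maximally monotone.

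Finally, claim \ref{NEC:3}. For density, the finitely supported zero-sum sequences lie in $\dom A$ (their partial sums are eventually $0$), and a sequence orthogonal to all $e_j-e_k$ is constant, hence $0$ in $\ell^2$; so $\dom A$ is dense. The inclusion $\dom A\subseteq\dom A^*$ is immediate, since $\sum x_i=0$ gives $\sum_{i>n}x_i=-\sum_{i\leq n}x_i\in\ell^2$; and $e_1\in\dom A^*\setminus\dom A$ (its sum is $1\neq0$) makes the inclusion strict. The step I expect to be the main obstacle is the precise determination of $\dom A^*$ together with the rigorous justification of the summation interchanges and of the convergence of $s=\sum_i x_i$ for $x\in\dom A^*$; this convergence bookkeeping is the technical heart, and once it is secured the remaining assertions follow from the two identities above.
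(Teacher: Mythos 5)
The paper does not actually prove Fact~\ref{FE:1}: every item is delegated to \cite{BWY7} and \cite{PheSim}, so your self-contained argument is by construction a different route, and a worthwhile one. Most of it checks out: the equivalence of the two formulas for $Ax$ on $\dom A$; skewness via $2\sum_{i<n}x_ix_n=-\sum_i x_i^2$ (justified by partial sums, since $\sum_{i\le n}x_i\to 0$ for $x\in\dom A$); the identity $\scal{A^*x}{x}=\thalb s^2$; the maximality proof for $A$ (the affine-in-$t$ argument reducing to $(x,-x^*)\in\gra A^*$, then $s=0$, then $x\in\dom A$ with $-A^*x=Ax$) which is clean and correct, though it silently uses that $A^*$ is single-valued, a fact that follows from density of $\dom A$; density of $\dom A$ via the vectors $e_j-e_k$; the witness $e_1$ for both non-skewness of $A^*$ and $\dom A\subsetneqq\dom A^*$; and the Brezis--Browder step, which matches how the paper itself uses \cite{Brezis-Browder} in Corollary~\ref{CElC:1}\ref{EN:em03}.

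The genuine gap is your justification of the adjoint formula. You write that ``by density it suffices to test the defining relation against finitely supported $a$''. That reduction is invalid: $A$ is unbounded (this operator is precisely a standard example of a \emph{discontinuous} maximally monotone operator), so $a\mapsto\scal{Aa}{y}$ is not norm-continuous on $\dom A$, and norm-density of the finitely supported zero-sum vectors proves nothing; one would need density in the graph norm. Moreover, your interchange of summation already presupposes that the tails $\sum_{n>i}y_n$ converge, which is part of what must be proved. The repair is to treat the two inclusions separately. For the inclusion of $\gra A^*$ into the graph of the claimed formula: test on $a=e_j-e_k$ to get $y_j^*-y_k^*=\thalb y_j+\sum_{j<n<k}y_n+\thalb y_k$, then let $k\to\infty$; since $y_k\to0$ and $y_k^*\to0$, this simultaneously proves that the tails converge and identifies $y^*$. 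For the reverse inclusion, take arbitrary $a\in\dom A$ and Abel-sum: with $s_n:=\sum_{i\le n}a_i$ and $t_n:=\sum_{i>n}y_i$ one gets $\sum_{n\le N}s_{n-1}y_n=\sum_{n<N}a_nt_n-s_{N-1}t_N$, and the boundary term vanishes because $s_N\to0$ and $t_N\to0$. You did flag this bookkeeping as the ``technical heart'', which is the right instinct, but the mechanism you offer (plain density) cannot supply it. Note that your maximality and $\thalb s^2$ arguments only use the first inclusion, so they survive intact once it is established this way.
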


\begin{example}\label{Exam:eL1}
Suppose that $X$ and $A$ are as in Fact~\ref{FE:1}. Then $A$ is
enlargeable but $A^*$ is non-enlargeable and is not skew.
Moreover, \begin{align*}\gra A_{\Elag}=\big\{(x,x^*) \in\gra
  (-A^*)\mid\big|\sum_{i\geq1}
  x_i\big|\leq\sqrt{2\varepsilon},\ x=(x_n)_{n\in\NN}\big\},
\end{align*}
where $\varepsilon\geq0$.
\end{example}
\begin{proof}
By Corollary~\ref{CElC:1}(iii) and Fact~\ref{FE:1}\ref{NEC:3}, $A$
must be enlargeable.  For the second claim, note that $X=\ell^2$ is
reflexive, and hence by Fact~\ref{FE:1}\ref{NEC:1} and
Corollary~\ref{CElC:1}\ref{EN:em03}, for every skew operator we must
have $A^*$ non-enlargeable.  For the last statement, apply
Corollary~\ref{CElC:1}\ref{EN:em01} and Fact~\ref{FE:1}\ref{NEC:5}
directly to obtain $\gra A_{\Elag}$.
\end{proof}

\begin{example}\label{Exam:eN1}
Let $C$ be a nonempty closed convex subset of $X$ and $\varepsilon\geq 0$.
Then \begin{align*}
\gra (N_C)_{\Elag}=
\big\{(x,x^*)\in C\times X^*\mid \sigma_C (x^*)\leq \langle x,x^*\rangle+
\varepsilon\big\}.
\end{align*}
\end{example}
\begin{proof}
By Fact~\ref{f:referee04}, we have
\begin{align*}
(x,x^*)\in\gra \ (N_C)_{\Elag}
&\Leftrightarrow F_{N_C}(x,x^*)=\iota_C(x)+\sigma_C(x^*)
\leq \langle x,x^*\rangle+\varepsilon\\
&\Leftrightarrow x\in C,\ \sigma_C(x^*)
\leq \langle x,x^*\rangle+\varepsilon.
\end{align*}
\end{proof}

\begin{example}\label{Exam:eL02}
Let $f(x):=\|x\|,\;\forall x\in X$  and $\varepsilon\geq 0$. Then
$$\gra (\partial f)_{\Elag}= \big\{(x,x^*)\in X\times B_{X^*}
\mid  \|x\|\leq \langle x,x^*\rangle+\varepsilon\big\}.$$
In particular, $(\partial f)_{\Elag}(0)=B_{X^*}$.

\end{example}
\begin{proof}
Note that $f$ is sublinear, and hence by Fact \ref{f:sub05} and Remark
\ref{r:sub05} we can write
\begin{align*}(x,x^*)\in\gra (\partial f)_{\Elag}
&\Leftrightarrow  F_{\partial f}(x,x^*)=f(x)+f^*(x^*)
\leq \langle x,x^*\rangle+\varepsilon\quad\text{(by \eqref{Enl:Sub1})}\\
&\Leftrightarrow \|x\|+\iota_{B_{X^*}}(x^*)
\leq \langle x,x^*\rangle+\varepsilon\quad\text{(by \cite[Corollary~2.4.16]{Zalinescu})}\\
&\Leftrightarrow x^*\in B_{X^*},\   \|x\|\leq \langle x,x^*\rangle+\varepsilon.
\end{align*}
Hence $(\partial f)_{\Elag}(0)=B_{X^*}$.
\end{proof}

\begin{example}\label{Exam:eL2}
Let $p>1$ and  $f(x):=\tfrac{1}{p} \|x\|^p,\;\forall x\in X$. Then
$$(\partial f)_{\Elag}(0)= p^{\tfrac{1}{p}}(q\varepsilon)^{\tfrac{1}{q}}B_{X^*},$$
where $\tfrac{1}{p}+\tfrac{1}{q}=1$  and $\varepsilon\geq 0$.

\end{example}
\begin{proof}
We have
\begin{align*}x^*\in(\partial f)_{\Elag}(0)
&\Leftrightarrow\langle x^*-y^*,-y\rangle\geq-\varepsilon,\quad\forall y^*\in\partial f(y)\\
&\Leftrightarrow  \langle x^*,-y\rangle+\|y\|^{p}\geq-\varepsilon,
\quad\forall y\in X\\
&\Leftrightarrow \langle x^*,y\rangle-\|y\|^{p}
\leq\varepsilon,\quad\forall y\in X\\&\Leftrightarrow p\sup_{y\in X}
\Big[ \langle \tfrac{1}{p}x^*,y\rangle-\tfrac{1}{p}\|y\|^{p}\Big]\leq\varepsilon\\
&\Leftrightarrow
 p\cdot \tfrac{1}{q}\|\tfrac{1}{p} x^*\|^{q}\leq\varepsilon\\
 &\Leftrightarrow \|x^*\|^{q}\leq q\varepsilon p^{q-1}
 = q\varepsilon p^{\tfrac{q}{p}}\\
 &\Leftrightarrow x^*\in
   p^{\tfrac{1}{p}}(q\varepsilon)^{\tfrac{1}{q}} B_{X^*}.
\end{align*}
\end{proof}

\subsection{Applications of Fitzpatrick's last function}

For a monotone linear operator $A\colon X \rightarrow X^*$ it will be
very useful to define the following quadratic function (which is
actually a special case of \emph{Fitzpatrick's last function}
\cite{BorVan} for the linear relation $A$):
\begin{equation*}
q_A \colon x\mapsto \thalb \scal{x}{Ax}.
\end{equation*}
Then $q_A=q_{A_+}$.
We shall use the well known fact (see, e.g., \cite{PheSim}) that
\begin{equation} \label{e:gradq}
\nabla q_A = A_+,
\end{equation}
where the gradient operator
$\nabla$ is understood in the G\^ateaux sense.

The next result was first given in \cite[Proposition~2.2]{BWY2} for a
reflexive space. The proof is easily adapted to a general Banach
space.

\begin{fact}\label{better}
Let $A\colon X\to X^*$ be linear continuous,  symmetric and monotone.
Then
\begin{equation}\label{qa}
\big(\forall (x,x^*)\in X\times X^*\big)\quad
q_{A}^*(x^*+Ax)=q_{A}(x)+\scal{x}{x^*} +q_{A}^*(x^*)
\end{equation}
and $q_{A}^*\small\circ A=q_{A}$.
\end{fact}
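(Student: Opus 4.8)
The plan is to evaluate the conjugate directly from its definition and reduce everything to a completing-the-square identity enabled by the symmetry of $A$. Since $A$ is a single-valued linear operator defined on all of $X$, the function $q_A(x)=\thalb\scal{x}{Ax}$ is finite for every $x$, and monotonicity gives $q_A\geq 0$ with $q_A(0)=0$. The central computation is the algebraic identity
\begin{equation*}
q_A(y)-\scal{y}{Ax}=q_A(y-x)-q_A(x),\qquad \forall\, x,y\in X,
\end{equation*}
which follows by expanding $q_A(y-x)=\thalb\scal{y-x}{A(y-x)}$ and using symmetry, $\scal{y}{Ax}=\scal{x}{Ay}$, to collapse the two cross terms into one.

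First I would establish \eqref{qa}. Starting from $q_A^*(x^*+Ax)=\sup_{y\in X}\big[\scal{y}{x^*}+\scal{y}{Ax}-q_A(y)\big]$, I would substitute $\scal{y}{Ax}-q_A(y)=-q_A(y-x)+q_A(x)$ from the identity above and then change variables $u:=y-x$. Because $y\mapsto y-x$ is a bijection of $X$ and the remaining terms $\scal{x}{x^*}+q_A(x)$ are finite constants, the supremum splits as
\begin{equation*}
q_A^*(x^*+Ax)=\sup_{u\in X}\big[\scal{u}{x^*}-q_A(u)\big]+\scal{x}{x^*}+q_A(x)=q_A^*(x^*)+\scal{x}{x^*}+q_A(x),
\end{equation*}
which is exactly \eqref{qa}. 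This argument remains valid even when $q_A^*(x^*)=\pinf$, since then both sides are $\pinf$.

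The second identity $q_A^*\circ A=q_A$ then drops out by specializing \eqref{qa} to $x^*=0$: it yields $q_A^*(Ax)=q_A(x)+q_A^*(0)$, so it remains only to check that $q_A^*(0)=0$. But $q_A^*(0)=\sup_{y\in X}\big[-q_A(y)\big]=-\inf_{y\in X}q_A(y)=0$, since monotonicity forces $q_A\geq 0$ while $q_A(0)=0$. Alternatively, one may obtain this identity directly from the Fenchel--Young equality together with $\nabla q_A=A_+=A$, valid by \eqref{e:gradq} and symmetry.

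I expect the only delicate point to be pure book-keeping: verifying that the symmetry of $A$ makes the cross terms in the completing-the-square identity combine correctly, and confirming that the change of variables and the splitting of the supremum remain valid in the extended reals when $q_A^*(x^*)$ is infinite. Neither continuity nor reflexivity is needed for the algebra; continuity only guarantees that $q_A$ is finite-valued (and by Fact~\ref{F:1} it is automatic for monotone linear $A$), while monotonicity is used solely to pin down $q_A^*(0)=0$.
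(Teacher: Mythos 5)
Your proof is correct, and it is worth noting that the paper itself offers no argument for this Fact: it simply cites \cite[Proposition~2.2]{BWY2}, where the result is proved for reflexive spaces, together with the remark that ``the proof is easily adapted to a general Banach space.'' Your completing-the-square identity
$q_A(y)-\scal{y}{Ax}=q_A(y-x)-q_A(x)$, which rests only on the symmetry relation $\scal{y}{Ax}=\scal{x}{Ay}$, combined with the translation $u:=y-x$ inside the supremum defining $q_A^*(x^*+Ax)$, gives a self-contained derivation of \eqref{qa} that never invokes reflexivity, weak compactness, or any renorming --- so it substantiates, rather than merely asserts, the paper's claim that the extension to general Banach spaces is routine. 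Your handling of the extended-real-valued bookkeeping (splitting off the finite constant $\scal{x}{x^*}+q_A(x)$, and noting both sides are $\pinf$ when $q_A^*(x^*)=\pinf$) is sound, and the deduction of $q_A^*\circ A=q_A$ from the case $x^*=0$ together with $q_A^*(0)=-\inf q_A=0$ (monotonicity) is exactly the right way to finish; the alternative route via Fenchel--Young and $\nabla q_A=A_+=A$ from \eqref{e:gradq} is also valid. In short, where the paper defers to an external reference, you have supplied the missing elementary argument in full generality.
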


The next result was first proven in \cite[Proposition~2.2(v)]{BBW} in
Hilbert space. We now extend it to a general Banach space.

\begin{proposition}\label{f1:Fitz}
Let   $A\colon X\to X^*$ be linear and monotone. Then
\begin{equation}
F_A(x,x^*)=2
q_{A_+}^*(\tfrac{1}{2}x^*+\tfrac{1}{2}A^*x)
=\tfrac{1}{2}q_{A_{+}}^*(x^*+A^*x),\quad \forall(x,x^*)\in X\times X,
\end{equation}
and $\ran A_+\subseteq\dom\partial q_{A_{+}}^* \subseteq\dom
q_{A_{+}}^*\subseteq\overline{\ran A_+}$. If $\ran A_{+}$ is closed,
then $\dom q_{A_{+}}^*=\dom\partial q_{A_{+}}^*=\ran A_{+}$.
\end{proposition}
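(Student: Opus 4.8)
The plan is to reduce everything to the symmetric part $A_+$, which by \eqref{Fee:1} and Fact~\ref{F:1} is continuous, linear, symmetric and monotone, and then to exploit the quadratic $q_{A_+}$, for which $q_A=q_{A_+}$ and $\nabla q_{A_+}=A_+$ by \eqref{e:gradq}. First I would compute $F_A$ directly from its definition. Since $A$ is single-valued and linear, the supremum runs over $a\in X$ with $a^*=Aa$; writing $\langle x,Aa\rangle=\langle A^*x,a\rangle$ (the adjoint convention gives exactly $\langle A^*x,a\rangle=\langle x,Aa\rangle$ for $x\in X$) and $\langle a,Aa\rangle=2q_{A_+}(a)$ turns the Fitzpatrick supremum into $\sup_{a\in X}\big[\langle x^*+A^*x,a\rangle-2q_{A_+}(a)\big]=(2q_{A_+})^*(x^*+A^*x)$. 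Because $q_{A_+}$ is positively homogeneous of degree two, so is $q_{A_+}^*$; hence $(2q_{A_+})^*=\tfrac12 q_{A_+}^*$ and $\tfrac12 q_{A_+}^*(x^*+A^*x)=2q_{A_+}^*(\tfrac12 x^*+\tfrac12 A^*x)$, which yields both displayed expressions for $F_A$.

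For the inclusion chain the two inner inclusions are quick. Since $q_{A_+}$ is convex and G\^ateaux differentiable with $\nabla q_{A_+}=A_+$, we have $\partial q_{A_+}(x)=\{A_+x\}$, and the Fenchel--Young equality then gives $x\in\partial q_{A_+}^*(A_+x)$; hence $A_+x\in\dom\partial q_{A_+}^*$ for every $x$, i.e. $\ran A_+\subseteq\dom\partial q_{A_+}^*$. The inclusion $\dom\partial q_{A_+}^*\subseteq\dom q_{A_+}^*$ is immediate, since a proper convex function is finite wherever its subdifferential is nonempty.

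The crux is $\dom q_{A_+}^*\subseteq\overline{\ran A_+}$, and this is where I expect the real work. Fix $y^*\in\dom q_{A_+}^*$. Applying Fenchel--Young to the pairs $(x,ty^*)$ and using the two-homogeneity of $q_{A_+}^*$ gives $t\langle y^*,x\rangle\le q_{A_+}(x)+t^2q_{A_+}^*(y^*)$ for every $t\in\RR$; viewing this as a nonnegative quadratic in $t$ forces the Cauchy--Schwarz-type bound $\langle y^*,x\rangle^2\le 2\,q_{A_+}^*(y^*)\,\langle x,A_+x\rangle$ for all $x\in X$. I would then introduce the positive semidefinite form $(x,z)_0:=\langle x,A_+z\rangle$ (symmetric because $A_+$ is symmetric, semidefinite because $A_+$ is monotone), pass to the Hilbert space $H$ obtained by completing $X/\ker A_+$ under $p(x):=\sqrt{\langle x,A_+x\rangle}$, and let $\iota\colon X\to H$ be the canonical map, noting that each $A_+z$ acts on $X$ via $\langle A_+z,x\rangle=(\iota x,\iota z)_H$. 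The bound above says $x\mapsto\langle y^*,x\rangle$ is $p$-continuous, so it extends to a bounded functional on $H$ and by Riesz equals $(h,\cdot)_H$ for some $h\in H$; choosing $z_n\in X$ with $\iota z_n\to h$ and using $p(x)\le\sqrt{\|A_+\|}\,\|x\|$, one gets $\|y^*-A_+z_n\|_{X^*}\le\sqrt{\|A_+\|}\,\|h-\iota z_n\|_H\to 0$, so $y^*\in\overline{\ran A_+}$. The main obstacle is precisely this passage: in a nonreflexive space one cannot separate $y^*$ from $\overline{\ran A_+}$ by an element of $X$, so the naive duality argument (which would only place $y^*$ in $(\ker A_+)^\bot$, possibly larger than $\overline{\ran A_+}$) fails, and the Hilbert-completion/Riesz route is what repairs it.

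Finally, combining the three inclusions gives $\ran A_+\subseteq\dom\partial q_{A_+}^*\subseteq\dom q_{A_+}^*\subseteq\overline{\ran A_+}$; when $\ran A_+$ is closed the outer terms coincide, so all the intermediate sets collapse to $\ran A_+$, which is the last assertion.
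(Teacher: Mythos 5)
Your proof is correct, and while the conjugation formula and the first two inclusions follow essentially the paper's route (direct evaluation of the Fitzpatrick supremum using $\scal{a}{A^*x}=\scal{x}{Aa}$ and $q_A=q_{A_+}$, then Fenchel--Young to get $\ran A_+\subseteq\dom\partial q_{A_+}^*$), your treatment of the crux inclusion $\dom q_{A_+}^*\subseteq\overline{\ran A_+}$ is genuinely different. The paper disposes of it in one line by citing the Br{\o}ndsted--Rockafellar theorem in the form $\dom f^*\subseteq\overline{\ran \partial f}$ for any proper lower semicontinuous convex $f$, applied to $f=q_{A_+}$ (so that $\ran\partial q_{A_+}=\ran A_+$ by \eqref{e:gradq}). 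You instead exploit the quadratic structure: two-homogeneity of $q_{A_+}^*$ turns Fenchel--Young into the Cauchy--Schwarz-type bound $\scal{y^*}{x}^2\le 2\,q_{A_+}^*(y^*)\scal{x}{A_+x}$, which makes $\scal{y^*}{\cdot}$ continuous for the seminorm $p(x)=\sqrt{\scal{x}{A_+x}}$; completing $X/\ker p$ to a Hilbert space and applying Riesz representation then yields $z_n$ with $A_+z_n\to y^*$ in the norm of $X^*$, via $p(x)\le\sqrt{\|A_+\|}\,\|x\|$. Both arguments are valid. The paper's is shorter and applies verbatim to any proper lsc convex function, at the cost of invoking a deep variational principle; yours is self-contained and constructive (it exhibits the approximating sequence with the explicit bound $\|y^*-A_+z_n\|\le\sqrt{\|A_+\|}\,\|h-\iota z_n\|_H$), but is tied to the quadratic case. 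Two details you should make explicit: the identity $\scal{A^*x}{a}=\scal{x}{Aa}$ for all $x\in X$ presupposes $X\subseteq\dom A^*$, which is precisely what Fact~\ref{F:1} supplies (the paper states this step, and it is also what makes $\|A_+\|<\infty$ in your final estimate); and in the discriminant argument the degenerate case $q_{A_+}^*(y^*)=0$ should be handled separately --- there the inequality in $t$ is affine, forcing $\scal{y^*}{x}=0$ for all $x$, hence $y^*=0\in\overline{\ran A_+}$, so your bound and conclusion survive.
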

\begin{proof}
By Fact~\ref{F:1}, $\dom A^*\cap X=X$, so for every $x,y\in X$ we have
$x,y\in \dom A^*\cap \dom A$. The latter fact and the definition of
$A^*$ yield $\scal{y}{A^*x}=\scal{x}{Ay}$. Hence for every $(x,x^*)\in
X\times X^*$,
\begin{align}
F_A(x,x^*) &= \sup_{y\in X} \scal{x}{Ay} +\scal{y}{x^*} -
\scal{y}{Ay}\notag\\
&= 2\sup_{y\in X} \scal{y}{\thalb x^* + \thalb A^*x} - q_{A_+}(y)\notag\\
&= 2q_{A_+}^*(\thalb x^*+\thalb A^*x)\notag\\
&= \thalb q_{A_+}^*(x^* + A^*x),
\end{align}
where we also used the fact that $q_A=q_{A_+}$ in the second
equality. The third equality follows from the definition of Fenchel
conjugate.  By \cite[Proposition~2.4.4(iv)]{Zalinescu},
\begin{align} \ran \partial q_{A_{+}}\subseteq \dom\partial q_{A_{+}}^*\label{suu:fix1}\end{align}
By \eqref{e:gradq},
$\ran \partial q_{A_{+}}=\ran A_+$. Then by \eqref{suu:fix1},
\begin{align}\ran A_+\subseteq\dom\partial q_{A_{+}}^*\subseteq\dom q_{A_{+}}^*\end{align}
Then by  the Br{\o}ndsted-Rockafellar Theorem
(see  \cite[Theorem~3.1.2]{Zalinescu}),
\begin{align*}\ran A_+\subseteq\dom\partial q_{A_{+}}^*
\subseteq\dom q_{A_{+}}^*\subseteq\overline{\ran A_+}.
\end{align*}
Hence, under the assumption that $\ran A_+$ is closed, we have
$\ran A_+=\dom\partial q_{A_{+}}^*=\dom q_{A_{+}}^*$.
\end{proof}

We can now apply the last proposition to obtain a formula for the enlargement of a single valued-operator.

\begin{proposition}[Enlargement of a monotone linear operator]\label{Bu:1}
Let $A:X\rightarrow X^*$ be a linear and  monotone operator, and $\varepsilon\geq0$. Then
\begin{align}A_{\Elag}(x)=\Big\{Ax+z^*\mid
 q^*_{A}(z^*)\leq 2\varepsilon\Big\},\quad \forall x\in X.\label{LSNE:1}\end{align}
Moreover, $A$ is non-enlargeable if and only if $A$ is skew.
\end{proposition}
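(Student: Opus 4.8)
The plan is to read off the enlargement from its Fitzpatrick-function description \eqref{Enl:1}, namely $x^*\in A_{\Elag}(x)\iff F_A(x,x^*)\le\scal{x}{x^*}+\varepsilon$, and to substitute the formula for $F_A$ established in Proposition~\ref{f1:Fitz}. Since $A$ is linear and monotone, Fact~\ref{F:1} makes $A$ maximally monotone and continuous, and in particular $\dom A^*\cap X=X$ (as already noted in the proof of Proposition~\ref{f1:Fitz}). Hence $A_+=\thalb A+\thalb A^*$ is an everywhere-defined linear operator on $X$; it is symmetric by construction, and monotone because $\scal{x}{A_+x}=\scal{x}{Ax}\ge0$ (using $\scal{y}{A^*x}=\scal{x}{Ay}$ at $y=x$). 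Applying Fact~\ref{F:1} to $A_+$ then shows $A_+$ is continuous, so $q_A=q_{A_+}$ is a finite, continuous, nonnegative convex quadratic, i.e.\ proper, lower semicontinuous and convex. This is exactly what is needed to apply Fact~\ref{better} to $A_+$.

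The crux is the identity
\[
F_A(x,x^*)=\scal{x}{x^*}+\thalb q_{A_+}^*(x^*-Ax),\qquad\forall(x,x^*)\in X\times X^*.
\]
To obtain it I start from $F_A(x,x^*)=\thalb q_{A_+}^*(x^*+A^*x)$ (Proposition~\ref{f1:Fitz}) and decompose $x^*+A^*x=(x^*-Ax)+2A_+x$, since $A+A^*=2A_+$. Applying Fact~\ref{better} to $A_+$ at the point $2x$ with the functional $x^*-Ax$ gives
\[
q_{A_+}^*(x^*+A^*x)=q_{A_+}(2x)+\scal{2x}{x^*-Ax}+q_{A_+}^*(x^*-Ax).
\]
The main (though purely routine) obstacle is the factor-of-two bookkeeping: one uses $q_{A_+}(2x)=4q_{A_+}(x)=2\scal{x}{A_+x}=2\scal{x}{Ax}$ and $\scal{2x}{x^*-Ax}=2\scal{x}{x^*}-2\scal{x}{Ax}$, whereupon the two copies of $\scal{x}{Ax}$ cancel and leave $q_{A_+}^*(x^*+A^*x)=2\scal{x}{x^*}+q_{A_+}^*(x^*-Ax)$. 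Halving yields the identity. Combining it with \eqref{Enl:1} and $q_A=q_{A_+}$, the defining inequality $F_A(x,x^*)\le\scal{x}{x^*}+\varepsilon$ becomes $q_A^*(x^*-Ax)\le2\varepsilon$; writing $z^*:=x^*-Ax$ gives precisely \eqref{LSNE:1}.

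For the equivalence I read non-enlargeability straight off \eqref{LSNE:1}. As $A$ is single-valued, for a fixed $\varepsilon$ we have $A_{\Elag}(x)=\{Ax\}$ for all $x$ exactly when $\{z^*\mid q_A^*(z^*)\le2\varepsilon\}=\{0\}$, and non-enlargeability demands this for every $\varepsilon\ge0$. Since $q_A\ge0=q_A(0)$ we always have $q_A^*(0)=0$, so the requirement holds for all $\varepsilon$ if and only if $q_A^*(z^*)=+\infty$ for every $z^*\ne0$, i.e.\ $q_A^*=\iota_{\{0\}}$. Because $q_A$ is proper, lower semicontinuous and convex, Fenchel--Moreau gives $q_A=q_A^{**}=(\iota_{\{0\}})^*\equiv0$, that is $\scal{x}{Ax}=0$ for all $x$, which is skewness \eqref{skew}. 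Conversely, if $A$ is skew then $q_A\equiv0$, so $q_A^*=\iota_{\{0\}}$ and \eqref{LSNE:1} collapses to $A_{\Elag}(x)=\{Ax\}$ for every $\varepsilon\ge0$, whence $A$ is non-enlargeable.
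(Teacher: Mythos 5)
Your proof is correct, and its two halves compare differently with the paper's. For the formula \eqref{LSNE:1} you follow essentially the paper's own route: both arguments combine Proposition~\ref{f1:Fitz} with Fact~\ref{better} applied to $A_+$ at the point $2x$, using $Ax+A^*x=A_+(2x)$ and $q_A=q_{A_+}$; the paper merely substitutes $x^*=Ax+z^*$ at the outset rather than first deriving the identity $F_A(x,x^*)=\scal{x}{x^*}+\thalb q_{A}^*(x^*-Ax)$ for general $(x,x^*)$, but the factor-of-two bookkeeping is the same. For the ``moreover'' statement your argument genuinely differs. The paper invokes its structural results on non-enlargeable operators: by Fact~\ref{F:1}, $\dom A^*\cap X=X$, and then Theorem~\ref{CElC:01} together with Corollary~\ref{CElC:1}\ref{EN:em02b} yield that non-enlargeability is equivalent to skewness. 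You instead read the equivalence directly off \eqref{LSNE:1}: non-enlargeability for every $\varepsilon\geq0$ forces $q_A^*=\iota_{\{0\}}$, and Fenchel--Moreau biconjugation (legitimate since $q_A$ is a finite continuous convex function) turns this into $q_A\equiv 0$, i.e.\ $\scal{x}{Ax}=0$ for all $x$, which is skewness; the converse is immediate since $q_A\equiv0$ gives $q_A^*=\iota_{\{0\}}$. Your route is more self-contained and elementary: it needs no adjoint characterization and no weak$\times$weak$^*$ closedness of the graph, and it makes the second claim a genuine corollary of the first. What the paper's route buys is economy within its own development: it reuses Theorem~\ref{CElC:01} and Corollary~\ref{CElC:1}, which are the centerpiece of Section~\ref{secneo}, and it exhibits the skewness criterion as a special case of the general graph-inclusion characterization $\gra(-A^*)\cap X\times X^*\subseteq\gra A$ rather than as a computation with conjugates.
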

\begin{proof} Fix $x\in X$, $z^*\in X^*$ and $x^*=Ax+z^*$.
 Then by Proposition~\ref{f1:Fitz} and Fact~\ref{better},
\begin{align*}&x^*\in A_{\Elag}(x)
\Leftrightarrow F_A(x,Ax+z^*)\leq\langle x,Ax+z^*\rangle+\varepsilon\\
&\Leftrightarrow \tfrac{1}{2}q^*_{A_+}(Ax+z^*+A^*x)
\leq\langle x,Ax+z^*\rangle+\varepsilon\\
&\Leftrightarrow \tfrac{1}{2}q^*_{A_+}\big(A_+ (2x)+z^*\big)
\leq\langle x,Ax+z^*\rangle+\varepsilon\\
&\Leftrightarrow \tfrac{1}{2}\left[q^*_{A_+}(z^*)+2\langle x,z^*\rangle+2\langle x, Ax\rangle\right]
\leq\langle x,Ax+z^*\rangle+\varepsilon\\
&\Leftrightarrow  q^*_{A}(z^*)\leq 2\varepsilon,
\end{align*}
where we also used in the last equivalence the fact that
$q_A=q_{A_+}$. Now we show the second statement. By Fact~\ref{F:1},
$\dom A^*\cap X=X$.  Then by Theorem~\ref{CElC:01} and
Corollary~\ref{CElC:1}\ref{EN:em02b}, we have $A$ is non-enlargeable
if and only if $A$ is skew.
\end{proof}

A  result similar  to Corollary~\ref{Tu:1} below
 was proved in \cite[Proposition~2.2]{BurIus:1}  in  reflexive space. Their proof still requires
   the constraint that $\ran (A+A^*)$ is closed.
\begin{corollary}\label{Tu:1}
Let $A:X\rightarrow X^*$ be a linear continuous and
  monotone operator such that $\ran (A+A^*)$ is closed. Then
$$A_{\Elag}(x)=\Big\{Ax+(A+A^*)z\mid q_{A}(z)\leq \tfrac{1}{2}\varepsilon\Big\},\quad \forall x\in X.$$
\end{corollary}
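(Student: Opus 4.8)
The plan is to reduce everything to Proposition~\ref{Bu:1}, which already furnishes the description $A_{\Elag}(x)=\big\{Ax+z^*\mid q^*_{A}(z^*)\leq 2\varepsilon\big\}$. Thus it suffices to prove the set identity
\begin{equation*}
\big\{z^*\in X^*\mid q^*_{A}(z^*)\leq 2\varepsilon\big\}
=\big\{(A+A^*)z\mid q_{A}(z)\leq\tfrac{1}{2}\varepsilon\big\},
\end{equation*}
since adding $Ax$ to both families then yields the claim for every $x\in X$. My two working tools will be the closedness hypothesis (to pin down $\dom q_A^*$) and Fact~\ref{better} applied to the symmetric part $A_+$ (to compute $q_A^*$ along the range of $A+A^*$).

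First I would record that $A_+=\thalb A+\thalb A^*$ is linear, continuous, symmetric and monotone, so Fact~\ref{better} applies to $A_+$ and gives $q_{A_+}^*\circ A_+=q_{A_+}$; recalling $q_A=q_{A_+}$ this reads $q_A^*\circ A_+=q_A$. Next I would use the closedness assumption: since $A_+=\thalb(A+A^*)$ one has $\ran A_+=\ran(A+A^*)$, which is closed, so the last assertion of Proposition~\ref{f1:Fitz} gives $\dom q_A^*=\dom q_{A_+}^*=\ran A_+=\ran(A+A^*)$. Consequently every $z^*$ with $q_A^*(z^*)\leq 2\varepsilon$ lies in $\dom q_A^*=\ran(A+A^*)$, hence can be written $z^*=(A+A^*)z$ for some $z\in X$; this is what makes the change of variables legitimate in both directions.

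The computational heart is then a short homogeneity calculation. For $z^*=(A+A^*)z=A_+(2z)$, Fact~\ref{better} yields
\begin{equation*}
q_A^*(z^*)=q_A^*\big(A_+(2z)\big)=q_A(2z)=4\,q_A(z),
\end{equation*}
where the final equality is the quadratic scaling $q_A(\lambda z)=\lambda^2 q_A(z)$ of $q_A(z)=\thalb\scal{z}{Az}$. Hence, for such $z^*$, the inequality $q_A^*(z^*)\leq 2\varepsilon$ is equivalent to $q_A(z)\leq\tfrac{1}{2}\varepsilon$. Combined with the previous paragraph (which guarantees that the left-hand sublevel set consists precisely of elements of the form $(A+A^*)z$), this gives both inclusions and hence the desired set equality.

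The main obstacle I anticipate is not conceptual but bookkeeping: one must carry the factor $2$ between $A_+$ and $A+A^*$ consistently through Fact~\ref{better} and the scaling of the quadratic $q_A$, and one must genuinely invoke the closedness of $\ran(A+A^*)$ — not merely $\dom q_A^*\supseteq\ran A_+$ from Proposition~\ref{f1:Fitz}, but the equality $\dom q_A^*=\ran(A+A^*)$ — to ensure that every point of the sublevel set $\{q_A^*\leq 2\varepsilon\}$ admits the representation $z^*=(A+A^*)z$. Without the closedness one would only obtain an inclusion, which is exactly the gap the hypothesis is designed to close, as the remark preceding the statement about \cite[Proposition~2.2]{BurIus:1} suggests.
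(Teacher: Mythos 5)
Your proposal is correct and follows essentially the same route as the paper's own proof: both reduce to Proposition~\ref{Bu:1}, use the closedness of $\ran(A+A^*)$ together with Proposition~\ref{f1:Fitz} to get $\dom q_A^*=\ran(A+A^*)$, and then apply Fact~\ref{better} with the quadratic scaling $q_A(2z)=4q_A(z)$ to convert $q_A^*(z^*)\leq 2\varepsilon$ into $q_A(z)\leq\tfrac{1}{2}\varepsilon$. The only cosmetic difference is that you package the argument as a standalone set identity before adding $Ax$, whereas the paper threads the same computation directly through the equivalence $x^*\in A_{\Elag}(x)\Leftrightarrow x^*=Ax+z^*,\ q_A^*(z^*)\leq 2\varepsilon$.
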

\begin{proof}
Proposition~\ref{Bu:1} yields
\begin{equation}\label{Tu:eq1}
x^*\in A_{\Elag}(x)\Leftrightarrow x^*=Ax+z^*,\; q^*_{A}(z^*)\leq 2\varepsilon.
\end{equation}
In particular, $z^*\in \dom q^*_{A}$. Since $\ran (A_{+})$ is
closed, Proposition~\ref{f1:Fitz} yields
\[
\ran (A_{+}) = \ran (A+A^*)=\dom q^*_{A_+}=\dom q^*_{A}.
\]
The above expression and the fact that $z^*\in \dom q^*_{A}$ implies
that there exists $z\in X$ such that $z^*=(A+A^*)z$. Note also that (by Fact~\ref{better})
\[
 q^*_{A}(z^*)=q^*_{A_+}(z^*)=q^*_{A_+}(A_+(2z))=q_{A_+}(2z)=4q_A(z),
\]
where we used Fact~\ref{better} in the last equality. Using this in \eqref{Tu:eq1} gives
\begin{align*}
x^*\in A_{\Elag}(x)&\Leftrightarrow
x^*=Ax+(A+A^*)z,\;4q_{A}(z)\leq 2\varepsilon\\
&\Leftrightarrow x^*=Ax+(A+A^*)z,\;q_{A}(z)\leq \tfrac{1}{2}\varepsilon,\end{align*}
establishing the claim.
\end{proof}

We conclude the section with two examples.

\begin{example}[Rotation]\label{Exam:eL3}
Assume  that $X$ is the Euclidean plane $\RR^2$,
let $\theta \in\left[0,\tfrac{\pi}{2}\right]$, and set
\begin{align}
A:=\begin{pmatrix}
\cos\theta & -\sin\theta \\
\sin \theta  & \cos\theta
\end{pmatrix}.
\end{align}

Then for every $(\varepsilon,x)\in\RR_+\times\RR^2$,
\begin{align}A_{\Elag}(x)=\Big\{Ax+v\mid v\in
2\sqrt{(\cos{\theta})\varepsilon\,}B_X\Big\}.\label{ComSE:1}
\end{align}
\end{example}

\begin{proof}
We consider two cases.

\emph{Case 1}: $\theta=\tfrac{\pi}{2}$.

Then $A$ is skew operator.
By Corollary~\ref{CElC:1}, $A_{\Elag}=A$
and hence \eqref{ComSE:1} holds.

\emph{Case 2}:  $\theta \in\left[0,\tfrac{\pi}{2}\right[$.

Let $x\in\RR^2$.
Note that $\tfrac{A+A^*}{2}= (\cos\theta)\Id$, $q_{A}
 = \tfrac{\cos\theta}{2}\|\cdot\|^2$. Then by Corollary~\ref{Tu:1},
$$A_{\Elag}(x)=\Big\{Ax+2(\cos{\theta}) z\mid q_{A}(z)
=\tfrac{\cos\theta}{2}\|z\|^2\leq \tfrac{1}{2}\varepsilon\Big\}.$$
Thus, \begin{align*}A_{\Elag}(x)&=\Big\{Ax+v\mid
 \|v\|\leq 2\sqrt{(\cos\theta)\varepsilon\,}\Big\}
=\Big\{Ax+v\mid v\in 2\sqrt{(\cos\theta)\varepsilon\,}B_X\Big\}.
\end{align*}

\end{proof}

\begin{example}[Identity]\label{Exam:eL4}
Assume  that $X$ is a Hilbert space, and $A:=\Id$.
Let $\varepsilon\geq0$. Then
\begin{align*}\gra A_{\Elag}
=\Big\{(x,x^*)\in X\times X\mid x^*\in x+ 2\sqrt{\varepsilon}B_X\Big\}.
\end{align*}
\end{example}
\begin{proof}
By \cite[Example~3.10]{BM}, we have
\begin{align*}
(x,x^*)\in\gra A_{\Elag}
&\Leftrightarrow \tfrac{1}{4}\|x+x^*\|^2\leq\langle x,x^*\rangle+\varepsilon\\
&\Leftrightarrow \tfrac{1}{4}\|x-x^*\|^2\leq\varepsilon\\
&\Leftrightarrow\|x-x^*\|\leq 2\sqrt{\varepsilon}\\
&\Leftrightarrow x^*\in x+ 2\sqrt{\varepsilon}B_X.
\end{align*}
\end{proof}

\section{Sums of operators}
\label{secsumo}

The conclusion of the lemma below has been established for reflexive
Banach spaces in \cite[Lemma~5.8]{BWY3}.  Our proof for a general
Banach space assumes the operators to be of type (FPV) and follows
closely that of \cite[Lemma~5.8]{BWY3}.

\begin{lemma}\label{Co:1}
Let $A, B\colon X\To X^*$ be  maximally monotone of type (FPV), and suppose that
$\bigcup_{\lambda>0}\lambda\left[\dom A-\dom B\right]$ is a closed subspace of $X$.
Then we have
\begin{align*}\bigcup_{\lambda>0}\lambda\left[\dom A-\dom B\right]=
\bigcup_{\lambda>0}\lambda\left[P_{X}\dom F_A-P_{X}\dom F_B\right].\end{align*}
\end{lemma}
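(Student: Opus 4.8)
The plan is to prove the two inclusions separately, exploiting the sandwich $\dom A\subseteq P_X[\dom F_A]\subseteq\overline{\dom A}$ (and likewise for $B$) that is available for operators of type (FPV): this is precisely the chain recorded in the Remark following Fact~\ref{f:referee02c}, obtained there from \eqref{Enl:1}, Fact~\ref{f:Fitz} and Fact~\ref{f:referee02c}. Throughout, write $L:=\bigcup_{\lambda>0}\lambda\left[\dom A-\dom B\right]$, which by hypothesis is a closed subspace of $X$.

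First I would dispatch the inclusion ``$\subseteq$''. Since $\dom A\subseteq P_X[\dom F_A]$ and $\dom B\subseteq P_X[\dom F_B]$, we get $\dom A-\dom B\subseteq P_X[\dom F_A]-P_X[\dom F_B]$; scaling by each $\lambda>0$ and taking the union yields $L\subseteq\bigcup_{\lambda>0}\lambda\left[P_X\dom F_A-P_X\dom F_B\right]$. This direction uses only the \emph{lower} bound of the sandwich and needs no closedness.

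For the reverse inclusion ``$\supseteq$'' I would invoke the \emph{upper} bound $P_X[\dom F_A]\subseteq\overline{\dom A}$ and $P_X[\dom F_B]\subseteq\overline{\dom B}$, so that $P_X\dom F_A-P_X\dom F_B\subseteq\overline{\dom A}-\overline{\dom B}$. The crucial step is then to absorb the closures into $L$. Since any $c\in\overline{\dom A}$, $d\in\overline{\dom B}$ arise as limits $c_n\to c$, $d_n\to d$ with $c_n-d_n\in\dom A-\dom B$, one has $c-d\in\overline{\dom A-\dom B}$, i.e. $\overline{\dom A}-\overline{\dom B}\subseteq\overline{\dom A-\dom B}$. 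As $\dom A-\dom B\subseteq L$ and $L$ is closed, this gives $\overline{\dom A}-\overline{\dom B}\subseteq\overline{L}=L$, hence $P_X\dom F_A-P_X\dom F_B\subseteq L$. Because $L$ is a subspace, $\lambda L=L$ for every $\lambda>0$, and therefore $\bigcup_{\lambda>0}\lambda\left[P_X\dom F_A-P_X\dom F_B\right]\subseteq L$, completing the equality.

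The only place the closedness hypothesis is genuinely used is this absorption step: it is what lets the Fitzpatrick-domain difference, squeezed between $\dom$ and $\overline{\dom}$, collapse back into $L$. I expect this to be the main — indeed essentially the only — obstacle; everything else is the elementary set algebra of conical hulls combined with the type-(FPV) sandwich, so no Attouch--Br\'ezis-type machinery is required.
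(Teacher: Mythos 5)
Your proof is correct and takes essentially the same route as the paper's: both arguments rest on the sandwich $\dom A\subseteq P_{X}\left[\dom F_A\right]\subseteq\overline{\dom A}$ (Fact~\ref{f:Fitz} together with Fact~\ref{f:referee02c}), the elementary inclusion $\overline{\dom A}-\overline{\dom B}\subseteq\overline{\dom A-\dom B}$, and the closedness hypothesis to absorb that closure back into $\bigcup_{\lambda>0}\lambda\left[\dom A-\dom B\right]$. The only cosmetic difference is that the paper writes one circular chain of inclusions starting and ending at the same set, whereas you prove the two inclusions separately and use the subspace property $\lambda L=L$ in place of the paper's step $\bigcup_{\lambda>0}\lambda\left[\overline{\dom A-\dom B}\right]\subseteq\overline{\bigcup_{\lambda>0}\lambda\left[\dom A-\dom B\right]}$.
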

\begin{proof}
By Fact~\ref{f:Fitz} and Fact~\ref{f:referee02c}, we have
\begin{align*}
&\bigcup_{\lambda>0} \lambda\left[\dom A-\dom B\right]\subseteq
\bigcup_{\lambda>0} \lambda\left[P_{X}\dom F_A-P_{X}\dom F_B\right]
\subseteq\bigcup_{\lambda>0} \lambda\left[\overline{\dom A}-\overline{\dom B}\right]\\
&\subseteq\bigcup_{\lambda>0} \lambda\left[\overline{\dom A-\dom B}\right]\subseteq
\overline{\bigcup_{\lambda>0} \lambda\left[\dom A-\dom B\right]}\\
&=\bigcup_{\lambda>0} \lambda\left[\dom A-\dom B\right]\quad \text{(by the  assumption)}.
\end{align*}
\end{proof}

\begin{corollary}\label{Co:01}
Let $A, B\colon X\To X^*$ be  maximally monotone linear relations, and suppose that
$\dom A-\dom B$ is a closed subspace.
Then
\begin{align*}\left[\dom A-\dom B\right]=\bigcup_{\lambda>0} \lambda\left[P_{X}\dom F_A-P_{X}\dom F_B\right].
\end{align*}
\end{corollary}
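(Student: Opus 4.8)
The plan is to reduce everything to Lemma~\ref{Co:1} by exploiting the linear structure. First I would note that, since $A$ and $B$ are maximally monotone linear relations, Fact~\ref{f:referee01} guarantees that both are of type (FPV). Consequently the only hypothesis of Lemma~\ref{Co:1} still in need of verification is that $\bigcup_{\lambda>0}\lambda\left[\dom A-\dom B\right]$ is a closed subspace of $X$.

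Next I would invoke the definition of a linear relation. Because $\gra A$ and $\gra B$ are linear subspaces of $X\times X^*$, their images $\dom A=P_X(\gra A)$ and $\dom B=P_X(\gra B)$ are linear subspaces of $X$, and hence so is the Minkowski difference $\dom A-\dom B$. The elementary fact driving the argument is that any linear subspace $S$ of $X$ satisfies $\bigcup_{\lambda>0}\lambda S=S$: indeed $\lambda S\subseteq S$ for each $\lambda>0$ by closure of $S$ under scaling, while $S=1\cdot S$ supplies the reverse inclusion. Applying this with $S:=\dom A-\dom B$, which is closed by the standing hypothesis, I obtain that $\bigcup_{\lambda>0}\lambda\left[\dom A-\dom B\right]=\dom A-\dom B$ is a closed subspace, which is precisely what Lemma~\ref{Co:1} requires.

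Finally I would apply Lemma~\ref{Co:1}, whose conclusion is $\bigcup_{\lambda>0}\lambda\left[\dom A-\dom B\right]=\bigcup_{\lambda>0}\lambda\left[P_X\dom F_A-P_X\dom F_B\right]$. Chaining this with the collapse $\bigcup_{\lambda>0}\lambda\left[\dom A-\dom B\right]=\dom A-\dom B$ established in the previous step yields the stated identity $\dom A-\dom B=\bigcup_{\lambda>0}\lambda\left[P_X\dom F_A-P_X\dom F_B\right]$.

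There is no genuine obstacle here: the corollary is Lemma~\ref{Co:1} specialised, once one observes that the union over $\lambda>0$ on the left collapses because the domain of a linear relation is a subspace. The only points that warrant care are recording that maximally monotone linear relations are automatically of type (FPV), so that Lemma~\ref{Co:1} is applicable, and resisting the temptation to collapse the right-hand side as well, since $P_X\dom F_A$ and $P_X\dom F_B$ need not be subspaces and no such simplification is claimed there.
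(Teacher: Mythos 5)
Your proof is correct and follows essentially the same route as the paper's: the paper's one-line proof (``apply Fact~\ref{f:referee01} and Lemma~\ref{Co:1}'') implicitly relies on exactly the observations you spell out, namely that maximally monotone linear relations are of type (FPV) and that $\bigcup_{\lambda>0}\lambda\left[\dom A-\dom B\right]$ collapses to $\dom A-\dom B$ because the latter is a linear subspace. Your write-up simply makes these implicit details explicit, including the correct caution that the right-hand side does not similarly collapse.
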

\begin{proof} Directly apply  Fact~\ref{f:referee01} and Lemma~\ref{Co:1}.
\end{proof}

\begin{corollary}\label{Co:01sd}
Let $A\colon X\To X^*$ be a  maximally monotone linear relation
 and let $C\subseteq X$ be a nonempty and closed convex set. Assume that
$\bigcup_{\lambda>0} \lambda\left[\dom A-C\right]$ is a closed subspace.
Then
\begin{align*}\bigcup_{\lambda>0} \lambda\left[P_{X}\dom F_A-P_{X}\dom F_{N_C}\right]
&=\bigcup_{\lambda>0} \lambda\left[\dom A-C\right].
\end{align*}
\end{corollary}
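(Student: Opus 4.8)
The plan is to recognize that Corollary~\ref{Co:01sd} is a direct specialization of Lemma~\ref{Co:1} to the case where $B$ is a normal cone operator, so the entire task reduces to verifying that both operators $A$ and $N_C$ satisfy the hypotheses of that lemma. The two ingredients I would assemble are the type-(FPV) property for each operator and the identification of the domain-difference subspace.

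First I would establish that both $A$ and $N_C$ are of type (FPV). For the linear relation $A$, this is immediate from Fact~\ref{f:referee01}. For the normal cone operator $N_C$ of a nonempty closed convex set, note that $N_C = \partial \iota_C$, and since $\iota_C$ is proper, lower semicontinuous, and convex, Fact~\ref{f:referee0d} gives that $\partial \iota_C = N_C$ is of type (FPV). So both operators are maximally monotone of type (FPV), as required to invoke Lemma~\ref{Co:1}.

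Next I would match up the domain-difference hypothesis. The key observation is that $\dom N_C = C$, so the subspace $\bigcup_{\lambda>0}\lambda[\dom A - \dom N_C]$ appearing in Lemma~\ref{Co:1} is exactly $\bigcup_{\lambda>0}\lambda[\dom A - C]$, which is assumed closed by hypothesis. Thus the closedness assumption of Lemma~\ref{Co:1} is satisfied verbatim. Applying Lemma~\ref{Co:1} with $B := N_C$ then yields
\begin{align*}
\bigcup_{\lambda>0}\lambda\left[\dom A - C\right]
= \bigcup_{\lambda>0}\lambda\left[P_X\dom F_A - P_X\dom F_{N_C}\right],
\end{align*}
which is precisely the claimed equality (with the two sides written in the opposite order).

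The only point requiring genuine care — and the step I would flag as the main obstacle, though it is minor — is confirming that $\dom N_C = C$ matches the bracket in the lemma and that the normal cone operator qualifies as one of the type-(FPV) operators; one must be sure that the set $C$ being merely closed convex (not, say, having nonempty interior or special structure) is enough, but since Fact~\ref{f:referee0d} applies to the subdifferential of \emph{any} proper lsc convex function and $\iota_C$ is such a function, no extra regularity on $C$ is needed. Everything else is a direct substitution into Lemma~\ref{Co:1}.
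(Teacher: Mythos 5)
Your proposal is correct and follows essentially the same route as the paper: the paper's proof likewise sets $B:=N_C$ and applies Fact~\ref{f:referee01}, Fact~\ref{f:referee0d}, and Lemma~\ref{Co:1} directly. You merely spell out the details the paper leaves implicit (that $N_C=\partial\iota_C$ is of type (FPV) and that $\dom N_C=C$), which is fine.
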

\begin{proof} Let $B=N_C$. Then apply directly
 Fact~\ref{f:referee01}, Fact~\ref{f:referee0d} and Lemma~\ref{Co:1}.
\end{proof}

Theorem~\ref{FS6} below was  proved  in \cite[Theorem~5.10]{BWY3} for  a reflexive space.
We  extend it  to a general Banach space.

\begin{theorem}[Fitzpatrick function of the sum]\label{FS6}
Let $A,B\colon X\To X^*$ be maximally monotone linear relations,
and suppose that $\dom A-\dom B$ is closed.
Then $$F_{A+B}= F_A\Box_2F_B,$$
and the partial infimal convolution is exact everywhere.
\end{theorem}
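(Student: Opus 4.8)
The plan is to establish the two inequalities $F_{A+B} \leq F_A \Box_2 F_B$ and $F_{A+B} \geq F_A \Box_2 F_B$ separately, and simultaneously to verify that the partial infimal convolution is attained (exact). The first inequality is free: it is exactly the content of Fact~\ref{f:referee03}, which applies provided $\dom A \cap \dom B \neq \varnothing$; this nonemptiness follows from the hypothesis that $\dom A - \dom B$ is a (closed) subspace, hence contains $0$. So the entire work lies in the reverse inequality together with exactness. My strategy for the reverse inequality is to exploit the linear-relation structure: since $A$ and $B$ are maximally monotone linear relations, each is of type (FPV) by Fact~\ref{f:referee01}, and more importantly each Fitzpatrick function $F_A$, $F_B$ is itself convex and lower semicontinuous on $X \times X^*$, so the partial inf-convolution $F_A \Box_2 F_B$ is amenable to Fenchel-duality/conjugacy techniques.

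The key technical step is to compute, or at least to control, the partial infimal convolution via conjugation in the second variable. The idea is that partial inf-convolution in $X^*$ corresponds to addition after a partial Fenchel conjugation: passing to conjugates, $(F_A \Box_2 F_B)$ should dualize to a sum of partially conjugated functions, and the exactness of the inf-convolution will follow from an Attouch--Br\'ezis type constraint qualification (Fact~\ref{AttBre:1}). Here is exactly where the hypothesis ``$\dom A - \dom B$ is closed'' enters: by Corollary~\ref{Co:01} it upgrades to the statement that
\[
\dom A - \dom B = \bigcup_{\lambda>0}\lambda\left[P_X \dom F_A - P_X \dom F_B\right]
\]
is a closed subspace, which is precisely the qualification needed to guarantee that the relevant sum rule for conjugates holds with attainment (via Fact~\ref{f:referee1} / Fact~\ref{AttBre:1}). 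First I would set up the conjugate computation for $F_A \Box_2 F_B$ in terms of $F_A^*$ and $F_B^*$; then I would invoke the constraint qualification to get exactness of the convolution and the identity linking $(F_A \Box_2 F_B)$ to $F_{A+B}$; finally I would check that the resulting function dominates $\langle\cdot,\cdot\rangle$ so that Fact~\ref{f:referee1} is applicable and confirms that $A+B$ is maximally monotone, which in turn forces equality $F_{A+B} = F_A \Box_2 F_B$ through Fact~\ref{f:Fitz}.

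The main obstacle I expect is establishing the reverse inequality $F_{A+B} \geq F_A \Box_2 F_B$ with \emph{exactness}, rather than merely the inequality. Getting a clean conjugate formula for the partial inf-convolution in a nonreflexive space is delicate: one must be careful that the supremum defining $F_{A+B}$ over $\gra(A+B)$ decouples correctly into contributions from $\gra A$ and $\gra B$, and that the decoupling variable $v \in Y = X^*$ achieving the infimum actually exists. The linearity of the relations is what makes this feasible: because $\gra A$ and $\gra B$ are subspaces, $\gra(A+B) = \{(x, a^*+b^*) : (x,a^*)\in\gra A,\ (x,b^*)\in\gra B\}$, and the quadratic-free structure of the pairing lets the supremum factor. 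The closedness hypothesis on $\dom A - \dom B$ is then the exact ingredient that rules out the duality gap that would otherwise appear in infinite dimensions, so the heart of the proof is translating that geometric closedness into the analytic attainment of the infimum over $v$.
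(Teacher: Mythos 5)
You correctly dispose of the easy inequality $F_{A+B}\leq F_A\Box_2F_B$ via Fact~\ref{f:referee03} (and your observation that $0\in\dom A-\dom B$ gives $\dom A\cap\dom B\neq\varnothing$ is fine), and you correctly identify Attouch--Br\'ezis plus the closedness hypothesis as the engine. But the heart of the theorem --- the reverse inequality \emph{with attainment} --- is exactly the part your plan leaves as an assertion (``the identity linking $F_A\Box_2F_B$ to $F_{A+B}$''), and the concrete device that produces it is missing. The paper's proof does not conjugate $F_A\Box_2F_B$; it goes the other way. Fixing $(z,z^*)\in\dom F_{A+B}$, one writes $F_{A+B}(z,z^*)=(F+K)^*(z^*,z,z)$, where $Y:=X^*$ and $F,K\colon X\times X^*\times Y\to\RX$ are defined by $F(x,x^*,y^*):=\langle x,x^*\rangle+\iota_{\gra A}(x,x^*)$ and $K(x,x^*,y^*):=\langle x,y^*\rangle+\iota_{\gra B}(x,y^*)$, proper, convex and lower semicontinuous by Fact~\ref{f:referee} because the graphs are linear subspaces. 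Then $\dom F-\dom K=\left[\dom A-\dom B\right]\times X^*\times Y$ is a closed subspace by hypothesis, so Fact~\ref{AttBre:1} splits $(F+K)^*(z^*,z,z)$ as an exact minimum of $F^*+K^*$; since $F$ ignores $y^*$ and $K$ ignores $x^*$, finiteness forces the dual components paired with the ignored variables to vanish, and the two conjugate pieces evaluate precisely to $F_A(z,z^*-v^*)$ and $F_B(z,v^*)$, which is the reverse inequality together with exactness. Your alternative --- computing $(F_A\Box_2F_B)^*$ in terms of $F_A^*$ and $F_B^*$ --- runs into the nonreflexivity you yourself flag: those conjugates live on $X^*\times X^{**}$, and returning to $X\times X^*$ would need a biconjugation/weak$^*$ lower semicontinuity argument you never supply; more importantly, nothing in your plan ever equates anything to $F_{A+B}$.

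The final step of your plan is a genuine non sequitur: maximal monotonicity of $A+B$ (via Fact~\ref{f:referee1}) does not force $F_{A+B}=F_A\Box_2F_B$ ``through Fact~\ref{f:Fitz}''. Fact~\ref{f:Fitz} only says $F_{A+B}\geq\langle\cdot,\cdot\rangle$ with equality exactly on $\gra(A+B)$; it compares $F_{A+B}$ with the duality pairing, not with $F_A\Box_2F_B$. Even if you showed $F_A\Box_2F_B$ belongs to the Fitzpatrick family $\mathcal{F}_{A+B}$, Fact~\ref{GRF:2} would only reproduce the inequality $F_{A+B}\leq F_A\Box_2F_B$ you already have, since $F_{A+B}$ is the \emph{minimum} of that family; the reverse inequality cannot come from maximality. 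Indeed, the paper's logical order is the opposite of yours: Theorem~\ref{FS6} is proved first by the conjugation argument above, and only then is it used, together with Corollary~\ref{Co:01} and Fact~\ref{f:referee1}, to deduce maximality of $A+B$ in Theorem~\ref{lisum:1}. (Relatedly, Corollary~\ref{Co:01} serves as the qualification check for Fact~\ref{f:referee1} in that maximality proof; it is not what drives the Attouch--Br\'ezis step here, which uses the hypothesis directly on $\dom F-\dom K$.) So your plan is circular in spirit and, as written, never proves the stated formula.
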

\begin{proof}Let $(z,z^*)\in X\times X^*$. By Fact~\ref{f:referee03}, it suffices to show that there exists $v^*\in X^*$ such that
\begin{equation} \label{elrl:ourgoal}
F_{A+B}(z,z^*)\geq F_A (z,z^*-v^*)+ F_{B}(z,v^*).
\end{equation}
If $(z,z^*)\notin \dom F_{A+B}$, clearly, \eqref{elrl:ourgoal} holds.

Now assume that $(z,z^*)\in \dom F_{A+B}$. Then
\begin{align}
&F_{A+B}(z,z^*)\nonumber\\
&=\sup_{\{x,x^*,y^*\}}\big[\langle x,z^*\rangle+\langle z,x^*\rangle-\langle x,x^*\rangle
+\langle z-x, y^*\rangle -\iota_{\gra A}(x,x^*)-\iota_{\gra
B}(x,y^*)\big].\label{lrsee:1}
\end{align}
Let $Y=X^*$ and define $F,K:X\times X^*\times Y\rightarrow\RX$ respectively by
\begin{align*}
F: &(x,x^*,y^*)\in X\times X^*\times Y\rightarrow\langle x,x^*\rangle+\iota_{\gra A}(x,x^*)\\
K:&(x,x^*, y^*)\in X\times X^*\times Y\rightarrow\langle x,y^*\rangle+\iota_{\gra B}(x,y^*)\\
\end{align*}
Then by \eqref{lrsee:1},
\begin{align}F_{A+B}(z,z^*)=(F+K)^*(z^*,z,z)\label{lrsee:2}\end{align}
By Fact~\ref{f:referee} and the assumptions, $F$ and
 $K$ are proper lower semicontinuous and convex. The definitions of $F$ and $K$ yield
\begin{align*}
\dom F-\dom K=\left[\dom A-\dom B\right]\times X^*\times Y,\ \text{ which is a closed subspace}.
\end{align*}
Thus by
Fact~\ref{AttBre:1} and \eqref{lrsee:2}, there exists $(z^*_0
,z^{**}_0,z^{**}_1)\in X^*\times X^{**}\times Y^*$ such that
 \begin{align*}F_{A+B}(z,z^*)&= F^*(z^*-z^*_0,z-z^{**}_0,z-z^{**}_1)+K^*(z^*_0
,z^{**}_0,z^{**}_1)\\
&= F^*(z^*-z^*_0,z,0)+K^*(z^*_0,0,z)\quad\text{(by $(z,z^*)\in\dom F_{A+B}$)}\\
&=F_A(z,z^*-z^*_0)+F_B(z,z^*_0).
\end{align*}
Thus \eqref{elrl:ourgoal} holds by taking $v^*=z_0^*$ and hence $F_{A+B}=F_A\Box_2 F_B$.
\end{proof}

The next result  was  first obtained by  Voisei in \cite{Voisei06} while
 Simons gave a different proof in \cite[Theorem~46.3]{Si2}.
We  are now in position to provide a third approach.

\begin{theorem}\label{lisum:1}
Let $A,B\colon X\To X^*$ be maximally monotone linear relations,
and suppose that $\dom A-\dom B$ is closed.
Then $A+B$ is maximally monotone.
\end{theorem}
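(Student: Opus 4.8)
The plan is to derive maximality of $A+B$ from the Fitzpatrick-function machinery that has already been assembled, chaining Theorem~\ref{FS6} with Fact~\ref{f:referee1}. The key observation is that Fact~\ref{f:referee1} reduces maximal monotonicity of the sum to two things: a closed-subspace constraint qualification on $\bigcup_{\lambda>0}\lambda[P_X(\dom F_A)-P_X(\dom F_B)]$, and the pointwise inequality $F_{A+B}\geq\langle\cdot,\cdot\rangle$ on all of $X\times X^*$. Both of these should fall out of the results proved immediately above.

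First I would verify the constraint qualification. The hypothesis gives that $\dom A-\dom B$ is a closed subspace, so in particular $\bigcup_{\lambda>0}\lambda[\dom A-\dom B]=\dom A-\dom B$ is a closed subspace. Since $A$ and $B$ are maximally monotone linear relations, Fact~\ref{f:referee01} tells us they are of type (FPV), so Corollary~\ref{Co:01} applies and yields
\begin{equation*}
\dom A-\dom B=\bigcup_{\lambda>0}\lambda\left[P_X(\dom F_A)-P_X(\dom F_B)\right].
\end{equation*}
Hence the set $\bigcup_{\lambda>0}\lambda[P_X(\dom F_A)-P_X(\dom F_B)]$ equals $\dom A-\dom B$, which is a closed subspace by assumption. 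This discharges the subspace hypothesis of Fact~\ref{f:referee1}.

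Next I would establish the inequality $F_{A+B}\geq\langle\cdot,\cdot\rangle$ on $X\times X^*$. Here is where Theorem~\ref{FS6} does the work: under the present hypotheses that theorem gives $F_{A+B}=F_A\Box_2 F_B$ with exact partial infimal convolution everywhere. So for any $(x,x^*)\in X\times X^*$ there exists $v^*\in X^*$ with $F_{A+B}(x,x^*)=F_A(x,x^*-v^*)+F_B(x,v^*)$. Applying the basic Fitzpatrick inequality \eqref{FFa} (equivalently Fact~\ref{f:Fitz}) to each summand gives $F_A(x,x^*-v^*)\geq\langle x,x^*-v^*\rangle$ and $F_B(x,v^*)\geq\langle x,v^*\rangle$, and adding these yields $F_{A+B}(x,x^*)\geq\langle x,x^*-v^*\rangle+\langle x,v^*\rangle=\langle x,x^*\rangle$, as required.

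With both hypotheses of Fact~\ref{f:referee1} verified, that fact delivers the maximal monotonicity of $A+B$ and the proof is complete. The only genuine content is borrowed from Theorem~\ref{FS6}, whose proof via the Attouch--Br\'ezis decomposition (Fact~\ref{AttBre:1}) is the real engine; in the present argument the main point to get right is simply that the closedness of $\dom A-\dom B$ transfers to the $P_X(\dom F)$ difference via Corollary~\ref{Co:01}, and I expect no serious obstacle beyond checking that the exactness clause of Theorem~\ref{FS6} is what supplies the witness $v^*$ needed to sum the two Fitzpatrick inequalities. The whole argument is therefore short: assemble the constraint qualification, invoke exactness to get the pointwise lower bound, and apply Fact~\ref{f:referee1}.
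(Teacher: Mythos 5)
Your proposal is correct and follows essentially the same route as the paper: Theorem~\ref{FS6} gives $F_{A+B}=F_A\Box_2 F_B\geq\langle\cdot,\cdot\rangle$ via Fact~\ref{f:Fitz}, Corollary~\ref{Co:01} supplies the closed-subspace constraint qualification, and Fact~\ref{f:referee1} then yields maximality. The only cosmetic difference is that you invoke the exactness clause to produce a witness $v^*$, while the paper simply notes that an infimum of quantities each bounded below by $\langle x,x^*\rangle$ is itself bounded below by $\langle x,x^*\rangle$; both are valid.
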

\begin{proof}
By Fact~\ref{f:Fitz}, we have that $F_A\geq \langle\cdot,\cdot\rangle$
and $F_B\geq \langle\cdot,\cdot\rangle$.  Using now Theorem~\ref{FS6}
and \eqref{infconv} implies that
$F_{A+B}\geq\langle\cdot,\cdot\rangle$. Combining the last inequality
with Corollary~\ref{Co:01} and Fact~\ref{f:referee1}, we conclude that
$A+B$ is maximally monotone.
\end{proof}

\begin{theorem}\label{lisum:e1}
Let $A,B\colon X\To X^*$ be maximally monotone linear relations,
and suppose that $\dom A-\dom B$ is closed.  Assume that $A$ and $B$ are non-enlargeable.
Then  $$F_{A+B}=\iota_{\gra(A+B)}+\langle\cdot,\cdot\rangle$$ and hence $A+B$ is non-enlargeable.
\end{theorem}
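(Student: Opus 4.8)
The plan is to combine the exact partial inf-convolution formula from Theorem~\ref{FS6} with the characterization of non-enlargeability from Corollary~\ref{CEl:1}. Since $A$ and $B$ are non-enlargeable, Corollary~\ref{CEl:1} gives $F_A=\iota_{\gra A}+\langle\cdot,\cdot\rangle$ and $F_B=\iota_{\gra B}+\langle\cdot,\cdot\rangle$. My target is to show $F_{A+B}=\iota_{\gra(A+B)}+\langle\cdot,\cdot\rangle$, which by Corollary~\ref{CEl:1} applied to $A+B$ (itself maximally monotone by Theorem~\ref{lisum:1}) is exactly equivalent to $A+B$ being non-enlargeable. So the entire content reduces to computing the partial inf-convolution $F_A\Box_2 F_B$ explicitly.

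The core calculation is as follows. By Theorem~\ref{FS6}, for every $(z,z^*)\in X\times X^*$,
\begin{equation*}
F_{A+B}(z,z^*)=\inf_{v^*\in X^*}\left[F_A(z,z^*-v^*)+F_B(z,v^*)\right].
\end{equation*}
Substituting the two singleton Fitzpatrick functions, each summand is finite only when the respective graph-membership constraint holds. That is, $F_A(z,z^*-v^*)=\langle z,z^*-v^*\rangle$ provided $(z,z^*-v^*)\in\gra A$ (and $+\infty$ otherwise), and $F_B(z,v^*)=\langle z,v^*\rangle$ provided $(z,v^*)\in\gra B$. When both constraints are met the two bilinear terms add to $\langle z,z^*\rangle$, independent of the splitting $v^*$. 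The feasibility of finding such a $v^*$ — i.e.\ $z^*-v^*\in Az$ and $v^*\in Bz$ for some $v^*$ — is precisely the statement that $z^*\in Az+Bz=(A+B)z$, i.e.\ $(z,z^*)\in\gra(A+B)$. Thus the infimum equals $\langle z,z^*\rangle$ when $(z,z^*)\in\gra(A+B)$ and equals $+\infty$ otherwise, which is exactly $\iota_{\gra(A+B)}(z,z^*)+\langle z,z^*\rangle$.

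Having established $F_{A+B}=\iota_{\gra(A+B)}+\langle\cdot,\cdot\rangle$, I would invoke Corollary~\ref{CEl:1} to conclude that $A+B$ is non-enlargeable; note that Theorem~\ref{lisum:1} (whose hypotheses are met since $\dom A-\dom B$ is closed) guarantees $A+B$ is maximally monotone, which is the standing assumption needed to apply Corollary~\ref{CEl:1}. The step I expect to require the most care is the exactness of the partial inf-convolution: I must use that Theorem~\ref{FS6} asserts the infimum is \emph{attained} everywhere, so that the value $+\infty$ on the complement of $\gra(A+B)$ is genuinely an equality rather than merely an upper or lower bound. The exactness is exactly what licenses reading off the value as a clean indicator-plus-bilinear expression, rather than leaving a gap between $F_{A+B}$ and $F_A\Box_2 F_B$. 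Everything else is a routine bookkeeping of when the two constraints $(z,z^*-v^*)\in\gra A$ and $(z,v^*)\in\gra B$ are simultaneously satisfiable.
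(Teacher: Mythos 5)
Your proof is correct and follows essentially the same route as the paper: substitute the singleton representations $F_A=\iota_{\gra A}+\langle\cdot,\cdot\rangle$, $F_B=\iota_{\gra B}+\langle\cdot,\cdot\rangle$ from Corollary~\ref{CEl:1} into the formula $F_{A+B}=F_A\Box_2 F_B$ of Theorem~\ref{FS6}, observe that the feasible splittings $v^*$ exist precisely when $(z,z^*)\in\gra(A+B)$ and then all give the value $\langle z,z^*\rangle$, and finally invoke Theorem~\ref{lisum:1} together with Corollary~\ref{CEl:1} applied to $A+B$. One minor remark: the exactness (attainment) part of Theorem~\ref{FS6}, which you flag as the delicate point, is not actually needed --- the functional equality $F_{A+B}=F_A\Box_2 F_B$ suffices, since the summand is constantly $\langle z,z^*\rangle$ on the (possibly empty) feasible set, so the infimum is automatically $\langle z,z^*\rangle$ or $+\infty$ with no attainment issue.
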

\begin{proof}
By Corollary~\ref{CEl:1}, we have
\begin{align}
F_A=\iota_{\gra A}+\langle\cdot,\cdot\rangle\quad\text{and}\quad
F_B=\iota_{\gra B}+\langle\cdot,\cdot\rangle.\label{SumEL:1}
\end{align}
Let $(x,x^*)\in X\times X^*$.
Then by \eqref{SumEL:1} and Theorem~\ref{FS6}, we have
\begin{align*}
F_{A+B} (x,x^*)&=\min_{y^*\in X^*}\big\{\iota_{\gra A}(x,x^*-y^*)+\langle
x^*-y^*,x\rangle+\iota_{\gra B}(x,y^*)+\langle y^*,x\rangle\big\}\\
&=\iota_{\gra (A+B)}(x,x^*)+\langle x^*,x\rangle.
\end{align*}
By Theorem~\ref{lisum:1} we have that $A+B$ is maximally monotone. Now
we can apply Corollary~\ref{CEl:1} to $A+B$ to conclude that $A+B$ is
non-enlargeable.
\end{proof}

The proof  of Theorem~\ref{tf:main}  in part follows that of \cite[Theorem~3.1]{BWY4}.

\begin{theorem}\label{tf:main}
Let $A:X\To X^*$ be a maximally monotone linear relation.
Suppose $C$ is a nonempty closed convex subset of $X$,
and  that $\dom A \cap \inte C\neq \varnothing$.
Then $F_{A+N_C}= F_A\Box_2F_{N_C}$,
and the partial infimal convolution is exact everywhere.
\end{theorem}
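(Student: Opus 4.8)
The easy inequality $F_{A+N_C}\le F_A\Box_2 F_{N_C}$ is immediate from Fact~\ref{f:referee03} (applicable since $\dom A\cap\dom N_C\supseteq\dom A\cap\inte C\neq\varnothing$), so the whole content is the reverse inequality together with exactness. The plan is to mimic the proof of Theorem~\ref{FS6}: realize the relevant value as the conjugate of a sum of two convex functions at a single point and then split that conjugate by Attouch--Br\'ezis. The one structural difference is that $\gra N_C$ is \emph{not} convex, so I cannot use $\langle\cdot,\cdot\rangle+\iota_{\gra N_C}$ as I would for a linear relation; instead I exploit Fact~\ref{f:referee04}, i.e.\ the separable form $F_{N_C}=\iota_C\oplus\sigma_C$. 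Concretely, with $Y=X^*$ I set, on $X\times X^*\times Y$,
\[
F(x,x^*,y^*):=\langle x,x^*\rangle+\iota_{\gra A}(x,x^*),\qquad K(x,x^*,y^*):=\iota_C(x)+\sigma_C(y^*).
\]
Both are proper, lower semicontinuous and convex: $F$ by Fact~\ref{f:referee} (here $\gra A$ is a subspace, hence convex and closed), and $K$ because $F_{N_C}$ is a Fitzpatrick function.

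Next I would verify the qualification condition. Since the $y^*$-slot is free in $F$ and the $x^*$-slot is free in $K$, a direct computation gives $\dom F-\dom K=(\dom A-C)\times X^*\times X^*$. Picking $a_0\in\dom A\cap\inte C$ shows $\dom A-C$ contains a ball $-\delta B_X$, so $\bigcup_{\lambda>0}\lambda[\dom F-\dom K]=X\times X^*\times X^*$ is a closed subspace. Thus Fact~\ref{AttBre:1} applies at the point $(z^*,z,z)\in X^*\times X^{**}\times X^{**}$ (with $z\in X$), yielding an exact splitting. Computing the two conjugates — the free $y^*$-slot forces the third argument of $F^*$ to vanish, and the free $x^*$-slot forces the second argument of $K^*$ to vanish — gives $F^*(z^*-v^*,z,0)=F_A(z,z^*-v^*)$ and $K^*(v^*,0,z)=\sigma_C(v^*)+\sigma_C^*(z)=F_{N_C}(z,v^*)$ (using $\sigma_C^*(z)=\iota_C(z)$ for $z\in X$). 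Hence
\[
(F+K)^*(z^*,z,z)=\min_{v^*\in X^*}\big[F_A(z,z^*-v^*)+F_{N_C}(z,v^*)\big]=(F_A\Box_2 F_{N_C})(z,z^*),
\]
with the minimum attained.

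It remains to prove the \emph{primal} inequality $F_{A+N_C}(z,z^*)\ge(F+K)^*(z^*,z,z)$ for every $(z,z^*)$; combined with the two displays above and Fact~\ref{f:referee03} this forces equality and exactness. Because $K$ is separable in $y^*$, one has $(F+K)^*(z^*,z,z)=\iota_C(z)+\sup_{(x,x^*)\in\gra A,\,x\in C}[\langle x,z^*\rangle+\langle z,x^*\rangle-\langle x,x^*\rangle]$. If $z\in C$ the indicator vanishes and, since $0\in N_C(x)$ for each $x\in C$, every admissible $(x,x^*)$ satisfies $(x,x^*)\in\gra(A+N_C)$, so each bracket is $\le F_{A+N_C}(z,z^*)$ and the inequality follows by taking the supremum. \textbf{The main obstacle} is precisely the case $z\notin C$: there the separable representation makes $(F+K)^*(z^*,z,z)=+\infty$, so I must independently certify that $F_{A+N_C}(z,z^*)=+\infty$, i.e.\ that $P_X(\dom F_{A+N_C})\subseteq C$. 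I would obtain this by localizing the domain: $N_C=\partial\iota_C$ and $\dom A\cap\inte\dom\partial\iota_C=\dom A\cap\inte C\neq\varnothing$, so Fact~\ref{domain:L1} shows $A+N_C$ is of type (FPV), whence Fact~\ref{f:referee02c} gives $\overline{P_X(\dom F_{A+N_C})}=\overline{\dom(A+N_C)}\subseteq\overline C=C$. This closes the gap and completes the proof.
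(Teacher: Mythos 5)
Your proof is correct, but it routes the key duality step differently from the paper. The paper stays in two variables: after the same FPV localization you use (Fact~\ref{domain:L1} plus Fact~\ref{f:referee02c}, giving $P_X\left[\dom F_{A+N_C}\right]\subseteq C$, hence $z\in C$), it bounds $F_{A+N_C}(z,z^*)\geq h^*(z^*,z)$ for $h=\langle\cdot,\cdot\rangle+\iota_{\gra A}+\iota_{C\times X^*}$ by setting $c^*=0$ in the defining supremum (your $0\in N_C(x)$ observation), and then splits $h^*$ by Rockafellar's Fenchel duality theorem (Fact~\ref{f:F4}), the continuity point $(c_0,c_0^*)$ with $c_0\in\dom A\cap\inte C$ supplying the qualification; the pieces are identified as $F_A$ and $F_{N_C}$ via Fact~\ref{f:referee04}, exactly as in your computation. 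You instead transplant the three-variable Attouch--Br\'ezis template of Theorem~\ref{FS6}, with the correct and necessary modification of replacing the non-convex candidate $\langle x,y^*\rangle+\iota_{\gra N_C}(x,y^*)$ by the separable function $K(x,x^*,y^*)=\iota_C(x)+\sigma_C(y^*)$, the closed-subspace condition holding because $\dom A-C$ is absorbing. The trade-off is clear: the paper's two-variable argument is leaner, and since its $h$ is supported on $\gra A\cap(C\times X^*)$ the primal inequality comes essentially for free once $z\in C$ is known; your version buys uniformity (Theorems~\ref{FS6} and~\ref{tf:main} become two instances of one Attouch--Br\'ezis scheme, rather than switching duality theorems between proofs) at the cost of the extra step you correctly flagged, namely that $(F+K)^*(z^*,z,z)$ carries the term $\sigma_C^*(z)=\iota_C(z)$, so the case $z\notin C$ must be handled by separately certifying $F_{A+N_C}(z,z^*)=+\infty$ --- which you close with precisely the FPV argument the paper deploys at the outset. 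Since both arguments rest on the same three ingredients (the FPV domain localization, $0\in N_C$, and $F_{N_C}=\iota_C\oplus\iota_C^*$), the difference lies in the packaging of the Fenchel duality step, not in the underlying ideas; both yield exactness, yours from the ``min'' in Fact~\ref{AttBre:1}, the paper's from the attainment in Fact~\ref{f:F4}.
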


\begin{proof}
Let $(z,z^*)\in X\times X^*$.
By Fact~\ref{f:referee03}, it suffices to show that there exists $v^*\in X^*$ such that
\begin{equation} \label{e:ourgoal}
F_{A+N_C}(z,z^*)\geq F_A (z,v^*)+ F_{N_C}(z,z^*-v^*).
\end{equation}
If $(z,z^*)\notin \dom F_{A+N_C}$, clearly, \eqref{e:ourgoal} holds.

Now assume that \begin{align}(z,z^*)\in \dom F_{A+N_C}.\label{EacInm}
\end{align}
By Fact~\ref{domain:L1} and Fact~\ref{f:referee02c},
 \begin{align*}P_X\left[\dom F_{A+N_C}\right]\subseteq
\overline{\left[\dom (A+N_C)\right]}\subseteq C.\end{align*}
Thus, by \eqref{EacInm}, we have
\begin{align}z\in C.\label{EacInm:1}
\end{align}
Set
\begin{equation}\label{e:defofg}
g\colon X\times X^* \to \RX\colon
(x,x^*)\mapsto \scal{x}{x^*} + \iota_{\gra A}(x,x^*).
\end{equation}
By Fact~\ref{f:referee}, $g$ is convex.
Hence,
\begin{equation} \label{e:defofh}
h = g + \iota_{C\times X^*}
\end{equation}
is convex as well.
Let
\begin{equation} \label{e:defofc0}
c_0 \in \dom A \cap \inte C,
\end{equation}
and let $c_0^*\in Ac_0$.
Then $(c_0,c_0^*)\in \gra A \cap (\inte C \times X^*) = \dom g \cap
\intdom\iota_{C\times X^*}$.  Let us compute $F_{A+N_C}(z,z^*)$. As in \eqref{lrsee:1} we can write
\begin{align}
&F_{A+N_C}(z,z^*)\nonumber\\
&=\sup_{(x,x^*,c^*)}\big[\langle x,z^*\rangle+\langle z,x^*\rangle-\langle x,x^*\rangle
+\langle z-x, c^*\rangle -\iota_{\gra A}(x,x^*)-\iota_{\gra N_C}(x,c^*)\big]\nonumber\\
&\geq \sup_{(x,x^*)}\big[\langle x,z^*\rangle+\langle z,x^*\rangle-\langle x,x^*\rangle
 -\iota_{\gra A}(x,x^*)-\iota_{C\times X^*}(x,x^*)\big]\nonumber\\
&=\sup_{(x,x^*)}\left[\langle x,z^*\rangle+\langle z,x^*\rangle-h(x,x^*)\right]\nonumber\\
&=h^*(z^*,z),\nonumber
\end{align}
where we took $c^*=0$ in the inequality. By Fact~\ref{pheps:1},
 $\iota_{C\times X^*}$ is continuous at
$(c_0,c_0^*)\in \intdom\iota_{C\times X^*}$. Since $(c_0,c_0^*)\in \dom g \cap
\intdom\iota_{C\times X^*}$ we can use Fact~\ref{f:F4} to conclude the
existence of $(y^*,y^{**})\in X^{*}\times X^{**}$ such that
\begin{align}
h^*(z^*,z)&=g^*(y^*,y^{**}) + \iota_{C\times X^*}^*(z^*-y^*,z-y^{**})\nonumber\\
&=g^*(y^*,y^{**}) + \iota_{C}^*(z^*-y^*) + \iota_{\{0\}}(z-y^{**}).\label{EacInm:2}
\end{align}
Then by \eqref{EacInm} and \eqref{EacInm:2} we must have
$z=y^{**}$.  Thus by \eqref{EacInm:2} and the definition of $g$ we have
\begin{align*}&F_{A+N_C}(z,z^*)\geq g^*(y^*,z) + \iota_{C}^*(z^*-y^*)
=F_A(z,y^*) + \iota_{C}^*(z^*-y^*)\\
&=F_A(z,y^*) + \iota_{C}^*(z^*-y^*)+\iota_C(z)\quad\text{(by \eqref{EacInm:1})}\\
&=F_A(z,y^*) + F_{N_C}(z,z^*-y^*)\quad\text{(by Fact~\ref{f:referee04})}.
\end{align*}
Hence \eqref{e:ourgoal} holds by taking $v^*=y^*$ and thus $F_{A+N_C}= F_A\Box_2F_{N_C}$.
\end{proof}

We decode the prior result as follows:

\begin{corollary}[Normal cone]\label{FLiN:1}
Let $A:X\To X^*$ be a maximally monotone linear relation.
Suppose $C$ is a nonempty closed convex subset of $X$,
and that $\dom A \cap \inte C\neq \varnothing$.
Then $A+N_C$ is maximally monotone.
\end{corollary}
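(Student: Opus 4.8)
The plan is to deduce the result from Fact~\ref{f:referee1} applied with $B=N_C$. First I would record that $N_C=\partial\iota_C$ is maximally monotone: since $C$ is nonempty, closed and convex, $\iota_C$ is proper, lower semicontinuous and convex, so Fact~\ref{SubMR} applies. Thus both $A$ and $N_C$ fall under the hypotheses of Fact~\ref{f:referee1}, and it remains to verify its two requirements, namely the pointwise lower bound $F_{A+N_C}\geq\langle\cdot,\cdot\rangle$ and the constraint qualification that $\bigcup_{\lambda>0}\lambda\left[P_X\dom F_A - P_X\dom F_{N_C}\right]$ is a closed subspace.

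For the lower bound I would invoke the exact representation just established in Theorem~\ref{tf:main}, namely $F_{A+N_C}=F_A\Box_2 F_{N_C}$. By Fact~\ref{f:Fitz} we have $F_A\geq\langle\cdot,\cdot\rangle$ and $F_{N_C}\geq\langle\cdot,\cdot\rangle$, so for each $(x,x^*)\in X\times X^*$ and each $v\in X^*$ the estimate $F_A(x,x^*-v)+F_{N_C}(x,v)\geq\langle x,x^*-v\rangle+\langle x,v\rangle=\langle x,x^*\rangle$ holds; taking the infimum over $v\in X^*$ in the partial inf-convolution \eqref{infconv} yields $F_{A+N_C}(x,x^*)\geq\langle x,x^*\rangle$ everywhere.

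For the constraint qualification the key observation is that the interior hypothesis $\dom A\cap\inte C\neq\varnothing$ makes the associated union all of $X$. Choosing $c_0\in\dom A\cap\inte C$ and $\delta>0$ with $c_0+\delta B_X\subseteq C$, one gets $\dom A - C\supseteq c_0-(c_0+\delta B_X)=\delta B_X$, whence $\bigcup_{\lambda>0}\lambda\left[\dom A - C\right]=X$, which is trivially a closed subspace. Corollary~\ref{Co:01sd} then applies and gives $\bigcup_{\lambda>0}\lambda\left[P_X\dom F_A - P_X\dom F_{N_C}\right]=\bigcup_{\lambda>0}\lambda\left[\dom A - C\right]=X$, again a closed subspace. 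With both hypotheses verified, Fact~\ref{f:referee1} concludes that $A+N_C$ is maximally monotone.

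I expect no genuine obstacle here: the entire analytic content has already been packed into Theorem~\ref{tf:main} (and, behind it, into the sandwiching of the Fitzpatrick-function domains by $\overline{\dom}$ via Facts~\ref{f:referee01}, \ref{f:referee0d} and \ref{f:referee02c} that feed Corollary~\ref{Co:01sd}). The only point deserving a line of care is the constraint qualification, but the presence of an interior point collapses the relevant union to the whole space, so the verification is routine rather than delicate.
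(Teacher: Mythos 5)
Your proof is correct and follows essentially the same route as the paper's: Theorem~\ref{tf:main} together with Fact~\ref{f:Fitz} gives $F_{A+N_C}\geq\langle\cdot,\cdot\rangle$, and Fact~\ref{f:referee1} plus the constraint qualification then yields maximality. Your write-up is in fact slightly more careful than the paper's, which cites Corollary~\ref{Co:01} (stated for two linear relations) where Corollary~\ref{Co:01sd} --- the one you invoke, together with your explicit check that $\bigcup_{\lambda>0}\lambda\left[\dom A - C\right]=X$ --- is what is actually needed.
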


\begin{proof}
By Fact~\ref{f:Fitz}, we have that $F_A\geq \langle\cdot,\cdot\rangle$
and $F_{N_C}\geq \langle\cdot,\cdot\rangle$.  Using now
Theorem~\ref{tf:main} and \eqref{infconv} implies that
$F_{A+N_C}\geq\langle\cdot,\cdot\rangle$. Combining the last
inequality with Corollary~\ref{Co:01} and Fact~\ref{f:referee1}, we
conclude that $A+N_C$ is maximally monotone.
\end{proof}

To conclude we revisit a quite subtle example. All statements in the
fact below have been proved in \cite[Example~4.1 and
  Theorem~3.6(vii)]{BBWY3}.

\begin{fact}\label{FCPEX:1}

 Consider  $X: = c_0$, with norm $\|\cdot\|_{\infty}$ so that
  $X^* = \ell^1$ with norm $\|\cdot\|_{1}$,
 and  $X^{**}=\ell^{\infty}$  with  second dual norm
$\|\cdot\|_{*}$. Fix
$\alpha:=(\alpha_n)_{n\in\NN}\in\ell^{\infty}$ with $\limsup
\alpha_n\neq0$, and define
$A_{\alpha}:\ell^1\rightarrow\ell^{\infty}$  by
\begin{align}\label{def:Aa}
(A_{\alpha}x^*)_n:=\alpha^2_nx^*_n+2\sum_{i>n}\alpha_n \alpha_ix^*_i,
\quad \forall x^*=(x^*_n)_{n\in\NN}\in\ell^1.\end{align}
\allowdisplaybreaks  Finally, let $T_{\alpha}:c_{0}\rightrightarrows X^*$  be defined by
\begin{align}\gra T_{\alpha}&:
=\big\{(-A_{\alpha} x^*,x^*)\mid x^*\in X^*,
 \langle \alpha, x^*\rangle=0\big\}\nonumber\\
&=\Big\{\big((-\sum_{i>n}
\alpha_n \alpha_ix^*_i+\sum_{i<n}\alpha_n \alpha_ix^*_i)_n, x^*\big)
\mid x^*\in X^*, \langle \alpha, x^*\rangle=0\Big\}.\label{PBABA:Ea1}
\end{align}
Then
\begin{enumerate}
\item\label{BCCE:A01} $\langle A_{\alpha}x^*,x^*\rangle=\langle \alpha , x^*\rangle^2,
\quad \forall x^*=(x^*_n)_{n\in\NN}\in\ell^1$
and so \eqref{PBABA:Ea1} is well defined.
\item \label{BCCE:SA01} $A_{\alpha}$ is a maximally monotone
 operator on $\ell^1$.

\item\label{BCCE:A1} $T_{\alpha}$
is a maximally monotone and skew operator on $c_0$.
\item\label{BCCE:A1F} $F_{T_{\alpha}}=\iota_C$, where
$
C:=\{(-A_{\alpha}x^*,x^*)\mid x^*\in X^*\}.
$
\end{enumerate}
 \end{fact}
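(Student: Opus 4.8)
The plan is to establish the four assertions in order, treating (iii) and (iv) together since both reduce to one computation of the annihilator $(\gra T_\alpha)^\perp$. For (i) I would simply expand both sides: writing $(A_\alpha x^*)_n=\alpha_n^2x_n^*+2\alpha_n\sum_{i>n}\alpha_ix_i^*$ and pairing with $x^*$ gives $\langle A_\alpha x^*,x^*\rangle=\sum_n\alpha_n^2(x_n^*)^2+2\sum_{n<i}\alpha_n\alpha_ix_n^*x_i^*$, which is exactly the expansion of $\big(\sum_n\alpha_nx_n^*\big)^2=\langle\alpha,x^*\rangle^2$; all series converge absolutely because $x^*\in\ell^1$ and $\alpha\in\ell^\infty$. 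For the well-definedness of \eqref{PBABA:Ea1}, I would use the rewriting $(-A_\alpha x^*)_n=\alpha_n\big(\sum_{i<n}\alpha_ix_i^*-\sum_{i>n}\alpha_ix_i^*\big)$ (the two displayed descriptions of $\gra T_\alpha$ agree precisely because $\langle\alpha,x^*\rangle=0$), and observe that as $n\to\infty$ the leading partial sum tends to $\langle\alpha,x^*\rangle=0$ and the tail to $0$ while $\alpha$ stays bounded, so $-A_\alpha x^*\in c_0$. The same estimate in fact shows $A_\alpha x^*\in c_0$ for every $x^*\in\ell^1$, so that $C\subseteq c_0\times\ell^1$.

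For (ii), $A_\alpha$ is an everywhere-defined linear map $\ell^1\to\ell^\infty=(\ell^1)^*$, and by (i) it is monotone since $\langle A_\alpha x^*,x^*\rangle=\langle\alpha,x^*\rangle^2\geq0$; maximal monotonicity is then immediate from Fact~\ref{F:1}. Turning to (iii) and (iv), I would first record that $\gra T_\alpha$ is a linear subspace and that $T_\alpha$ is skew: for $(a,a^*)=(-A_\alpha y^*,y^*)\in\gra T_\alpha$ one has $\langle a,a^*\rangle=-\langle A_\alpha y^*,y^*\rangle=-\langle\alpha,y^*\rangle^2=0$ by (i) and the defining constraint. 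Skewness collapses the Fitzpatrick function to $F_{T_\alpha}(x,x^*)=\sup_{(a,a^*)\in\gra T_\alpha}\big[\langle x,a^*\rangle+\langle a,x^*\rangle\big]$, which is the support function of the subspace $\gra T_\alpha$ evaluated at $(x^*,x)$ and hence equals $\iota_{(\gra T_\alpha)^\perp}(x^*,x)$. Thus both (iv) and the maximality in (iii) come down to the identification
\[
(\gra T_\alpha)^\perp\cap(\ell^1\times c_0)=\big\{(x^*,-A_\alpha x^*)\mid x^*\in\ell^1\big\}.
\]

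To prove this, I would use the polarization identity obtained from (i): expanding $\langle A_\alpha(x^*+y^*),x^*+y^*\rangle=\langle\alpha,x^*+y^*\rangle^2$ and cancelling the diagonal terms yields $\langle A_\alpha x^*,y^*\rangle+\langle A_\alpha y^*,x^*\rangle=2\langle\alpha,x^*\rangle\langle\alpha,y^*\rangle$. Applying this with $\langle\alpha,y^*\rangle=0$ recasts the annihilator condition, for $x\in c_0$, as $\langle y^*,\,x+A_\alpha x^*\rangle=0$ for every $y^*\in\ell^1$ with $\langle\alpha,y^*\rangle=0$. The inclusion $\supseteq$ is then trivial (if $x=-A_\alpha x^*$ the vector $x+A_\alpha x^*$ vanishes), and for $\subseteq$ the element $z:=x+A_\alpha x^*\in c_0\subseteq\ell^\infty=(\ell^1)^*$ is a functional vanishing on the kernel of $\langle\alpha,\cdot\rangle$, so $z=\lambda\alpha$ for some scalar $\lambda$. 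This identification gives (iv). For the maximality in (iii), if $(x,x^*)$ is monotonically related to $\gra T_\alpha$, then scaling $(a,a^*)$ within the subspace forces both $(x^*,x)\in(\gra T_\alpha)^\perp$ and $\langle x,x^*\rangle\geq0$; by the identification $x=-A_\alpha x^*$, so $\langle x,x^*\rangle=-\langle\alpha,x^*\rangle^2\leq0$, whence $\langle\alpha,x^*\rangle=0$ and $(x,x^*)\in\gra T_\alpha$. Since $T_\alpha$ is skew it is monotone, and this shows it is maximal.

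The main obstacle is the inclusion $\subseteq$ in the boxed identification, and it is exactly here that the hypothesis $\limsup\alpha_n\neq0$ is indispensable: from $z=\lambda\alpha$ with $z\in c_0$, a nonzero $\lambda$ would force $\alpha=z/\lambda\in c_0$, i.e.\ $\alpha_n\to0$, contradicting $\limsup\alpha_n\neq0$. Hence $\lambda=0$, so $z=0$ and $x=-A_\alpha x^*$. Everything else reduces to bookkeeping with absolutely convergent series and to the already-available Fact~\ref{F:1}, so I expect the verification of this single annihilator identity—and in particular the multiple-of-a-functional step together with the use of $\limsup\alpha_n\neq0$—to be the only genuinely delicate point.
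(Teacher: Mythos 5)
Your proposal cannot be checked against an in-paper argument for the simple reason that the paper does not prove Fact~\ref{FCPEX:1}: it is imported wholesale, with all proofs deferred to \cite[Example~4.1 and Theorem~3.6(vii)]{BBWY3}. What you have written is therefore a self-contained replacement for that citation, and after checking each step I find it correct. The expansion in (i) is legitimate by absolute convergence ($\alpha\in\ell^\infty$, $x^*\in\ell^1$); your observation that $A_\alpha x^*\in c_0$ for \emph{every} $x^*\in\ell^1$ is exactly what makes $\iota_C$ meaningful on $X\times X^*$ and is needed again when you form $z:=x+A_\alpha x^*\in c_0$; and (ii) is indeed immediate from $\langle A_\alpha z^*,z^*\rangle=\langle\alpha,z^*\rangle^2\geq 0$ together with Fact~\ref{F:1}, since $A_\alpha$ is everywhere defined and single valued on $\ell^1$. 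The real content is your annihilator identity, and its three ingredients are all sound: the polarization identity $\langle A_\alpha x^*,y^*\rangle+\langle A_\alpha y^*,x^*\rangle=2\langle\alpha,x^*\rangle\langle\alpha,y^*\rangle$ follows from (i) by linearity; a linear functional ($z\in c_0\subseteq\ell^\infty$) vanishing on the kernel of another ($\langle\alpha,\cdot\rangle$) is a scalar multiple of it, a purely algebraic fact; and $\lambda\neq 0$ would give $\alpha=z/\lambda\in c_0$, hence $\limsup\alpha_n=0$, contradicting the standing hypothesis --- so you have correctly isolated the single point where $\limsup\alpha_n\neq 0$ enters. Your derivation of maximality in (iii) from the same identity (scaling within the subspace forces $(x^*,x)\in(\gra T_\alpha)^{\bot}$ and $\langle x,x^*\rangle\geq 0$, while the identification forces $\langle x,x^*\rangle=-\langle\alpha,x^*\rangle^2\leq 0$) is also valid. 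Compared with the paper's approach of citing \cite{BBWY3}, your route buys verifiability in place and a structural economy the statement itself obscures: (iii) and (iv) are exposed as consequences of one computation, namely $(\gra T_\alpha)^{\bot}\cap(\ell^1\times c_0)=\big\{(x^*,-A_\alpha x^*)\mid x^*\in\ell^1\big\}$, rather than two separate facts.
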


This set of affairs allows us to show the following:

 \begin{example}
 Let $X=c_0$, $A_{\alpha}$, $C$, and $T_{\alpha}$ be defined as in Fact~\ref{FCPEX:1}.
 Then  $T_{\alpha}:c_{0}\rightrightarrows \ell^1$ is a maximally monotone enlargeable skew linear relation.
  Indeed
 \begin{align*}
 \gra (T_{\alpha}+N_{B_X})_{\Elag}=
 \Big\{(-A_{\alpha}x^*,z^*)\in B_X\times X^*\mid x^*\in X,
  \|z^*-x^*\|_1\leq\langle -A_{\alpha}x^*,z^*\rangle+\varepsilon\Big\}.
 \end{align*}
 \end{example}
\allowdisplaybreaks
\begin{proof}
From \eqref{PBABA:Ea1}, we have that $\gra T_{\alpha}\subsetneqq C$
therefore Fact~\ref{FCPEX:1}\ref{BCCE:A1F} yields $F_{T_{\alpha}}\neq
\iota_{\gra T_{\alpha}} +\langle\cdot,\cdot\rangle$. Using now
Fact~\ref{FCPEX:1}\ref{BCCE:A1} and Corollary~\ref{CEl:1}, we conclude
that $T_{\alpha}$ is enlargeable.

Now we determine $\gra (T_{\alpha}+N_{B_X})_{\Elag}$.
By Fact~\ref{FCPEX:1}\ref{BCCE:A1}, Theorem~\ref{tf:main} and \eqref{Enl:1}, we have
\begin{align}
&(z,z^*)\in \gra (T_{\alpha}+N_{B_X})_{\Elag}\nonumber\\
&\Leftrightarrow F_{T_{\alpha}}\Box_2F_{N_{B_X}}(z,z^*)\leq\langle z,z^*\rangle+\varepsilon
 \nonumber\\
&\Leftrightarrow F_{T_{\alpha}}(z,x^*)+\iota_{B_X}(z)+\iota^*_{B_X}(z^*-x^*)
\leq\langle z,z^*\rangle+\varepsilon, \;\exists x^*\in X^*  \quad\text{(by Fact~\ref{f:referee04})}\nonumber\\
&\Leftrightarrow z\in B_X,\, \iota_C(z,x^*)+\|z^*-x^*\|_1
\leq\langle z,z^*\rangle+\varepsilon,   \;\exists x^*\in X^*\;\text{(by Fact~\ref{FCPEX:1}\ref{BCCE:A1F})}\nonumber\\
&\Leftrightarrow z=-A_{\alpha}x^*\in B_X,\ \|z^*-x^*\|_1
\leq\langle z,z^*\rangle+\varepsilon,  \;\exists x^*\in X^*\nonumber\\
&\Leftrightarrow z=-A_{\alpha}x^*\in B_X,\;
  \|z^*-x^*\|_1\leq\langle -A_{\alpha}x^*,z^*\rangle+\varepsilon , \;\exists x^*\in X^*.\nonumber
\end{align}
This is the desired result.
\end{proof}

{\small

}

 \vfill

\paragraph{Acknowledgments.} The authors thank Dr.\ Heinz Bauschke and Dr.\ Xianfu Wang for their
valuable discussions and comments. Jonathan  Borwein was partially
supported by the Australian Research  Council.
The third author thanks CARMA at the University of Newcastle and  the School of Mathematics and Statistics of University of South Australia for the support of  his visit to Australia, which started this research.
\end{document}